\title[Synchronization in general digraph]{Emergence of synchronization in Kuramoto model with general digraph}
\author[Zhang]{Xiongtao Zhang \textsuperscript{$\dag$,}\textsuperscript{$\S$}}
\author[Zhu]{Tingting Zhu \textsuperscript{$\ddag$,}\textsuperscript{$\S$,}\textsuperscript{*}}
\newtheorem{theorem}{Theorem}[section]
\newtheorem{lemma}{Lemma}[section]
\newtheorem{corollary}{Corollary}[section]
\newtheorem{remark}{Remark}[section]
\newtheorem{definition}{Definition}[section]
\def\charf {\mbox{{\text 1}\kern-.30em {\text l}}}
\begin{document}

\date{\today}

\keywords{Synchronization, Kuramoto model, general digraph, spanning tree, hypo-coercivity}

\thanks{$^\dag$ Center for Mathematical Sciences, Huazhong University of Science and Technology, Wuhan, China (xtzhang@hust.edu.cn)}
\thanks{$^\ddag$ School of Mathematics and Statistics, Huazhong University of Science and Technology, Wuhan, China (ttzhud201880016@163.com)}
\thanks{$^\S$ The authors X. Zhang and T. Zhu contribute equally to the work.}
\thanks{$^*$ Corresponding author. }
\thanks{The work of X. Zhang is supported by the National Natural Science Foundation of China (Grant No. 11801194).}

\begin{abstract}
In this paper, we study the complete synchronization of the Kuramoto model with general network containing a spanning tree, when the initial phases are distributed in an open half circle. As lack of uniform coercivity in general digraph, in order to capture the dissipation structure on a general network, we apply the node decomposition criteria in \cite{H-L-Z20} to yield a hierarchical structure, which leads to the hypo-coercivity. This drives the phase diameter into a small region after finite time in a large coupling regime, and the uniform boundedness of the diameter eventually leads to the emergence of exponentially fast synchronization.
\end{abstract}
\maketitle \centerline{\date}

\section{Introduction}\label{sec:1}


Emergent collective behaviors in complex systems are ubiquitous around the world, such as aggregation of bacteria, flocking of birds, synchronous flashing of fireflies and so forth \cite{D-M07,D-M08,De-M08,P-L-S-G-P,T-T,T-B,V-C-B-C-S,Wi}, in which self-propelled agents organize themselves into a particular motion via limited environmental information and simple rules. In order to study the driven mechanism of the emergence of collective behaviors, various dynamic models have been proposed in recent years such as Cucker-Smale model \cite{C-S2}, Kuraomoto model \cite{Ku}, and Winfree model \cite{Wi}, etc.. These seminal models have received lots of attention and have been systematically studied due to their potential applications in biology and engineer, to name a few, modeling of cell and filament orientation, sensor networks, formation control of robots and unmanned aerial vehicles \cite{L-P-L-S,P-L-S-G-P,P-E-G}, etc. 

In the present paper, we focus on the emergence of synchronization in Kuramoto model with general interaction network. The terminology \textit{synchronization} represents the phenomena in which coupled oscillators adjust their rhythms through weak interaction \cite{A-B-V-R-S05, P-R-K}, and Kuramoto model is a classical model to study the emergence of synchronization. The emergent dynamics of the Kuramoto model has been extensively studied in literature \cite{ B-S00, C-H-J-K12,C-S09,D-B11,H-H-K,H-K-K-Z,H-K-L-N21,H-K-P15,H-K-R,H-R20,M-S07,St}.
In our work, to fix the idea, we consider a digraph $\mathcal{G} = (\mathcal{V}, \mathcal{E})$ consisting of a finite set $\mathcal{V} = \{1,\ldots, N\}$ of vertices and a set $\mathcal{E} \subset \mathcal{V} \times \mathcal{V}$ of directed arcs. We assume that Kuramoto oscillators are located at vertices and interact with each other via the underlying network topology. For each vertex $i$, we denote the set of its neighbors by $\mathcal{N}_i$, which is the set of vertices that directly influence vertex $i$. Now, let $\theta_i = \theta_i(t)$ be the phase of the Kuramoto oscillator at vertex $i$, and define the $(0,1)$-adjacency matrix $(\chi_{ij})$ as follows:
\begin{equation*}
\chi_{ij} = 
\begin{cases}
\displaystyle 1 \quad \mbox{if the $j$th oscillator influences the $i$th oscillator}, \\
\displaystyle 0 \quad \mbox{otherwise}.
\end{cases}
\end{equation*}
Then, the set of neighbors of $i$-th oscillator is actually $\mathcal{N}_i := \{j: \chi_{ij} >0\}$.
In this setting, the dynamics of phase $\theta_i$ is governed by the following ordinary differential system:
\begin{equation}\label{KuM}
\begin{cases}
\displaystyle \dot{\theta}_i(t) = \Omega_i + \kappa \underset{j \in \mathcal{N}_i}{\sum} \sin(\theta_j(t) - \theta_i(t)), \quad t>0, \quad i \in \mathcal{V}, \\
\displaystyle \theta_i(0) = \theta_{i0}.
\end{cases}
\end{equation}
where $\kappa > 0$ is the uniform coupling strength and $\Omega_i$ represents the intrinsic natural frequency of the $i$th oscillator drawn from some distribution function $g=g(\Omega)$.
The motivation to consider general network is very natural, since the non all-to-all or non-symmetric interactions are common in the real world. For instance, flying birds can make a flocking cluster via the influence from several neighbors, while the sheep can form a group by following the leader. Therefore, study on the dynamical system on a general digraph is natural and important, and gradually attracts a lot of researchers from different areas. We refer the readers to the following references for more details of the background \cite{C-L18, D-H-K19, D-H-K20, D-X, H-KD18, H-K-P-R16,   H-L-X, H-L-Z20}.

There are few works \cite{D-H-K19, D-X, H-L-X} on the synchronization of the Kuramoto model on a general digraph in contrast with the complete graph. More precisely, 
the authors in \cite{D-X} studied the generalized Kuramoto model with directed coupling topology, which is allowed to be non-symmetric. They showed the frequency synchronization when the initial phases of oscillators are distributed over the open half circle for a large class of coupling structure.  
However, they required any pair of oscillators have one common neighbor, so that the dissipation structure can be captured by the good property of sine function. In \cite{H-L-X}, the authors provided an asymptotic formation of phase-locked states for the ensemble of Kuramoto oscillators with a symmetric and connected network, when the initial configuration is distributed in a half circle. More precisely, they exploit the gradient structure and use energy method to derive complete synchronization whereas there is no information about the convergence rate. In literature \cite{D-H-K20}, the authors studied a network structure containing a spanning tree (see Definition \ref{spanning_tree}) on the collective behaviors of Kuramoto oscillators. Actually, they lift the Kuramoto model to second-order system such that the second-order formulation enjoys several similar mathematical structures as for the Cucker-Smale flocking model \cite{D-H-K19}. But this method only works when initial phases are confined in a quarter circle, since the cosine function becomes negative if $\frac{\pi}{2}< \theta<\pi$.

So far, if the ensemble distributed in half circle, the dissipation structure of the Kuramoto model with general digragh is still unclear. The main difficulty is that, when considering the ensemble in half circle, there is no uniform coercive inequality to yield the dissipation, which is due to the non-all-to-all and non-symmetric structure. For example, the time derivative of the diameter may be zero in the general digraph. Therefore, we switch to construct the hypo-coercivity similar as in \cite{H-L-Z20}, which will help us to capture the dissipation structure. Comparing to \cite{H-L-Z20} which deals with the Cucker-Smale model on a general digraph, the interactions in Kuramoto model lack the monotonic property since $\sin(x)$ is not monotonic in half circle. Therefore, we choose more delicate constructions and estimates of the convex combinations to fit the special structure of Kuramoto model, which eventually yields the following main theorem. 

\begin{theorem}\label{enter_small}
Suppose that the network topology $(\chi_{ij})$ contains a spanning tree, and let $\theta_i$ be a solution to \eqref{KuM}. Moreover, assume that the initial data and the quantity $\eta$ satisfy
 \begin{equation}\label{condition_1}
 D(\theta(0)) < \alpha < \gamma < \pi, \quad \eta > \max \left\{\frac{1}{\sin \gamma}, \frac{2}{1 - \frac{\alpha}{\gamma}} \right\},
 \end{equation}
 where $\alpha, \gamma$ are constants. Then, we can find a sufficiently small positive constant $D^\infty < \min\left\{\alpha,\frac{\pi}{2}\right\}$ and a corresponding time $t_*$ such that 
  \[D(\theta(t)) < D^\infty, \qquad t \in (t_*, + \infty),\]
 provided the coupling strength $\kappa$ satisfies
 \begin{equation}\label{condition_2}
 \kappa > \left( 1 + \frac{(d+1)\alpha}{\alpha - D(\theta(0))}\right) \frac{(4c)^d \tilde{c}}{\beta^{d+1}D^\infty},
 \end{equation}
 where $d$ is the number of general nodes which is smaller than $N$ (see Section \ref{sec:2}) and
\[c = \frac{(2N + 1)(\sum_{j=1}^{N-1} \eta^j A(2N,j) + 1)\gamma}{\sin \gamma}, \quad \tilde{c} = \frac{D(\Omega) (\sum_{j=1}^{N-1}\eta^j A(2N,j) + 1)\gamma}{\sin \gamma}.\]
\end{theorem}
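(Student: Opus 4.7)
The plan is to exploit the node decomposition of \cite{H-L-Z20}: since the network contains a spanning tree, the vertex set $\mathcal{V}$ admits a hierarchical decomposition into a root group $\mathcal{V}_0$ and general node groups $\mathcal{V}_1,\dots,\mathcal{V}_d$ so that every vertex in $\mathcal{V}_k$ receives at least one directed arc from $\mathcal{V}_0\cup\dots\cup\mathcal{V}_{k-1}$. On any time interval where $D(\theta(t))<\gamma<\pi$, the key lower bound $\sin(x)\geq \beta\,x$ for $|x|\leq\gamma$ (with $\beta=\sin\gamma/\gamma$) replaces the uniform coercivity one would have on a complete graph, and all the constants $c,\tilde c$ in the statement are precisely the ones needed to convert this weak sine-coercivity into a usable dissipation estimate.

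The first substantive step is to treat the root group $\mathcal{V}_0$, which is strongly connected: here a convex-combination Lyapunov functional (as in the classical analysis of the Kuramoto model with symmetric all-to-all coupling, suitably adapted to a strongly connected directed block) yields exponential contraction of $D_{\mathcal{V}_0}(\theta(t))$ toward a ball of radius $\O(D(\Omega)/\kappa)$. I would then propagate this control inductively up through $\mathcal{V}_1,\dots,\mathcal{V}_d$. At each layer $k$, I order the phases $\theta_{\sigma(1)}(t)\leq\dots\leq\theta_{\sigma(N)}(t)$ and form a convex combination weighted by powers of $\eta$ together with the combinatorial coefficients $A(2N,j)$; the choice $\eta>1/\sin\gamma$ is exactly what makes this combination a supersolution of a decay inequality, because the single surviving lower-layer arc contributes a favorable term of size $\beta$ while the remaining non-monotonic sine contributions from within the layer can be absorbed by the $1/\eta$-factor between consecutive weights. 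The auxiliary requirement $\eta>2/(1-\alpha/\gamma)$ ensures enough slack to keep the trajectory strictly interior to $(-\gamma,\gamma)$ at each induction step.

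The time analysis then splits into two phases. In the confinement phase I show that as long as $D(\theta(t))<\alpha$, the hierarchical combination decreases at a net rate of order $\kappa\beta^{d+1}/(4c)^d$ minus a forcing of order $\tilde c$; the large-coupling assumption \eqref{condition_2} guarantees this net rate is strictly positive and, through the prefactor $1+(d+1)\alpha/(\alpha-D(\theta(0)))$, supplies the safety margin needed so that after the $d+1$ layer-by-layer propagations the diameter has not yet crossed $\alpha$ but has been driven below $D^\infty$ at some finite time $t_*$. In the trapping phase $t>t_*$, $\sin$ is monotonic on $(-D^\infty,D^\infty)\subset(-\pi/2,\pi/2)$, and a standard forward-invariance argument applied to the same hierarchical combination keeps $D(\theta(t))<D^\infty$ thereafter. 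The main obstacle is the induction step itself: because $\sin$ is non-monotonic on $(0,\gamma)$ when $\gamma>\pi/2$, the contributions from different neighbors of a given oscillator need not share a sign, so a naive max-minus-min estimate fails. The delicate balancing is to pick the weights $\eta^j A(2N,j)$ so that the single weak but positive coercivity $\beta$ coming from the spanning-tree arc dominates the accumulated loss of size $1/\eta$ from the non-monotone sine terms along the whole chain of length $d$; this is precisely what forces both bounds on $\eta$ in \eqref{condition_1} and the particular $(4c)^d/\beta^{d+1}$ form of the coupling threshold in \eqref{condition_2}.
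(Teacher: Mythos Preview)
Your high-level plan coincides with the paper's: node decomposition into $\mathcal{G}_0,\dots,\mathcal{G}_d$, convex-combination Lyapunov quantities with weights built from $\eta^j A(2N,j)$, induction over the layers, and a confinement/trapping two-phase time argument driven by the coupling condition.

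Two corrections are needed. First, the constant $\beta$ appearing in \eqref{condition_2} is \emph{not} $\sin\gamma/\gamma$; in the paper $\beta=1-2/\eta$, and it enters through the comparison $\beta D_k(\theta)\le Q^k\le D_k(\theta)$ between the diameter and the Lyapunov quantity (Lemma~\ref{beta_QD_k}). The sine coercivity $\sin x\ge(\sin\gamma/\gamma)x$ appears separately, inside the differential inequality for $Q^k$. The hypothesis $\eta>2/(1-\alpha/\gamma)$ is exactly what forces $\alpha/\beta<\gamma$ with \emph{this} $\beta$, closing the bootstrap that keeps $D_k(\theta)<\gamma$; your stated reason for that hypothesis is therefore slightly off target. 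Second, the paper does not order all $N$ phases globally at each step. Instead each block $\mathcal{G}_i$ is ordered internally to produce $\bar\theta_i,\underline\theta_i$, and one sets $Q^k=\max_{0\le i\le k}\bar\theta_i-\min_{0\le i\le k}\underline\theta_i$. The induction step for $\dot Q^{k+1}$ then splits into four cases depending on whether the extremal convex combinations come from layer $k+1$ or from lower layers; in the mixed cases a cross-layer forcing term $\kappa(2N+1)D_k(\theta)$ appears on the right-hand side, controlled by the previous induction step, and this is precisely the origin of the factor $(4c)^d$ in \eqref{condition_2}. Your sketch of a single ``surviving lower-layer arc'' dominating ``accumulated loss of size $1/\eta$'' is the right intuition for one of these cases but does not by itself cover the others.
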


Note that Theorem \ref{enter_small} only shows the small and uniform boundedness of the ensemble, then we can directly apply the methods and results in \cite{H-L-Z20} or \cite{D-H-K20} to yield the exponentially fast emergence of frequency synchronization. Therefore, we will only show the detailed proof of Theorem \ref{enter_small}.
 
The rest of the paper is organized as follows. In Section \ref{sec:2}, we recall some concepts on the network topology and provide an a priori local-in-time estimate about phase diameter of the ensemble. In Section \ref{sec:3},  we consider a strong connected ensemble for which the initial phases are distributed in the open half circle. We show that the phase diameter is uniformly bounded and will be confined in a small region after some finite time in a large coupling regime. In Section \ref{sec:4}, we study the general network with a spanning tree structure. In our framework, the coupling strength is sufficiently large and the initial data is confined in an open half circle. We use the inductive argument and show that the phase diameter of the whole digraph will concentrate into a small region of a quarter circle after some finite time, which yields the exponential emergence of synchronization. Section \ref{sec:5} is devoted to a brief summary.\newline

\section{Preliminaries}\label{sec:2}
\setcounter{equation}{0}
In this section, we introduce some basic concepts such as spanning tree and node decomposition of a general network \eqref{KuM}. Then, we will provide some necessary notations and an a priori estimate that will be frequently used in later sections. 
 
\subsection{Spanning tree}
Roughly speaking, spanning tree means we can find an oscillator which affects all the other oscillators directly or in-directly. In other words, a system without spanning tree can be separated into two parts without any interactions. Therefore, this is the most important structure for emergence of collective behavior on a general digraph. 

More precisely, let the network topology be registered by the neighbor set $\mathcal{N}_i$ which consists of all neighbors of the $i$th oscillator. Then, for a given set of $\{\mathcal{N}_i\}_{i=1}^N$ in system \eqref{KuM}, we have the following definition.

\begin{definition}\label{spanning_tree}
\begin{enumerate}[(1)]
\item The Kuramoto digraph $\mathcal{G} = (\mathcal{V}, \mathcal{E})$ associated to \eqref{KuM} consists of a finite set $\mathcal{V} = \{1,2,\ldots,N\}$ of vertices, and a set $\mathcal{E} \subset \mathcal{V} \times \mathcal{V}$ of arcs with ordered pair $(j,i) \in \mathcal{E}$ if $j \in \mathcal{N}_i$.

\item A path in $\mathcal{G}$ from $i_1$ to $i_k$ is a sequence $i_1,i_2,\ldots,i_k$ such that
\[i_s \in \mathcal{N}_{i_{s+1}} \quad \mbox{for} \ 1 \le s \le k-1.\]
If there exists a path from $j$ to $i$, then vertex $i$ is said to be reachable from vertex $j$.

\item The Kuramoto digraph contains a spanning tree if we can find a vertex such that any other vertex of $\mathcal{G}$ is reachable from it.
\end{enumerate}
\end{definition}

\noindent According to the discussion of spanning tree in the beginning of this part, in order to guarantee the emergence of synchronization, we will always assume the existence of a spanning tree throughout the paper. Now we recall the concepts of root and general root in \cite{H-L-Z20}. Let $l,k \in \mathbb{N}$ with $1 \le l \le k \le N$, and let $C_{l,k} = (c_l,c_{l+1},\ldots,c_k)$ be a vector in $\mathbb{R}^{k-l+1}$ such that
\[c_i \ge 0, \quad l \le i \le k \quad \mbox{and} \quad \sum_{i=l}^k c_i = 1.\]
For an ensembel of $N$-oscillators with phase $\{\theta_i\}_{i=1}^N$, we set $\mathcal{L}_l^k(C_{l,k})$ to be a convex combination of $\{\theta_i\}_{i=l}^k$ with the coefficient $C_{l,k}$:
\[\mathcal{L}_l^k(C_{l,k}) := \sum_{i=l}^k c_i \theta_i.\]
Note that each $\theta_i$ is a convex combination of itself, and particularly $\theta_N = \mathcal{L}_N^N(1)$ and $\theta_1 = \mathcal{L}_1^1(1)$.

\begin{definition}\label{general_root}
(Root and general root)
\begin{enumerate}
\item We say $\theta_k$ is a root if it is not affected by the rest oscillators, i.e., $j \notin \mathcal{N}_k$ for any $j \in \{1,2,\ldots,N\} \setminus \{k\}$.

\item We say $\mathcal{L}_l^k(C_{l,k})$ is a general root if $\mathcal{L}_l^k(C_{l,k})$ is not affected by the rest oscillators, i.e., for any $i\in \{l, l+1,\ldots,k\}$ and $j \in \{1,2,\ldots,N\} \setminus \{l,l+1,\ldots, k\}$, we have $j \notin \mathcal{N}_i$. 
\end{enumerate}
\end{definition}

\begin{lemma}\label{ }\cite{H-L-Z20}
The following assertions hold.
\begin{enumerate}
\item If the network contains a spanning tree, then there is at most one root.
\item Assume the network contains a spanning tree. If $\mathcal{L}_k^N(C_{k,N})$ is a general root, then $\mathcal{L}_1^l(C_{1,l})$ is not a general root for each $l \in \{1,2,\ldots,k-1\}$.
\end{enumerate}
\end{lemma}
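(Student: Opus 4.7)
Both parts follow from unpacking the definitions once one reformulates the general root condition in graph-theoretic terms: $\mathcal{L}_l^k(C_{l,k})$ being a general root is equivalent to the block $\{l, l+1, \ldots, k\}$ having no incoming arcs from its complement. A spanning tree, in turn, supplies a distinguished vertex $r$ from which every other vertex is reachable via a directed path. My plan in both parts is to argue by contradiction, forcing a directed edge that one of these two conditions forbids.

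For (1), suppose $\theta_a$ and $\theta_b$ are two distinct roots and let $r$ be a vertex whose existence is guaranteed by the spanning tree. If $r \ne a$, then the directed path from $r$ to $a$ ends with some step $i_{m-1} \in \mathcal{N}_a$, contradicting the defining property of the root $\theta_a$. Hence $r = a$; by the same reasoning applied to $b$, we obtain $r = b$, so $a = b$.

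For (2), assume towards a contradiction that both $\mathcal{L}_k^N(C_{k,N})$ and $\mathcal{L}_1^l(C_{1,l})$ are general roots for some $l \in \{1, \ldots, k-1\}$. The general root property of $\mathcal{L}_k^N(C_{k,N})$ rules out any arc from $\{1, \ldots, k-1\}$ into $\{k, \ldots, N\}$, while the general root property of $\mathcal{L}_1^l(C_{1,l})$ rules out any arc from $\{l+1, \ldots, N\}$ into $\{1, \ldots, l\}$. Let $r$ be a spanning tree vertex and split into two exhaustive cases. If $r \in \{1, \ldots, l\}$, then $r$ must reach some vertex of $\{k, \ldots, N\}$, and tracking the sequence of indices along the path one finds a consecutive pair $i_s \in \{1, \ldots, k-1\}$, $i_{s+1} \in \{k, \ldots, N\}$ with $i_s \in \mathcal{N}_{i_{s+1}}$, violating the first prohibition. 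If instead $r \in \{l+1, \ldots, N\}$, the analogous argument applied to a path from $r$ to a vertex of $\{1, \ldots, l\}$ produces a forbidden arc into $\{1, \ldots, l\}$. Either alternative is a contradiction.

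The only slightly non-trivial step is this ``crossing'' observation: a directed path with endpoints in disjoint index sets must at some step contain an arc crossing the boundary between the two sets. This is just finite-sequence bookkeeping on the vertex labels. Beyond this, the proof is pure translation of the definitions into the language of arcs, and I would not expect any real obstruction.
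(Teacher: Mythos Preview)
Your proof is correct. Note that the paper does not actually prove this lemma; it is quoted directly from \cite{H-L-Z20} without argument, so there is no ``paper's own proof'' to compare against. Your approach---reformulating the general root condition as the absence of incoming arcs into a block of indices, and then using the spanning-tree vertex to force a forbidden crossing arc---is the natural direct argument and is exactly what one would expect the cited reference to contain.
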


\subsection{Node decomposition}
In this subsection, we will introduce the concept of maximum node. Then, we can introduce node decomposition to represent the whole graph $\mathcal{G}$ (or say vertex set $\mathcal{V}$) as a disjoint union of a sequence of nodes. The key point is that the node decomposition shows a hierarchical structure which allows us to apply the induction principle. Let $\mathcal{G} = (\mathcal{V}, \mathcal{E})$ and $\mathcal{V}_1 \subset \mathcal{V}$, a subgraph $\mathcal{G}_1 = (\mathcal{V}_1, \mathcal{E}_1)$ is the digraph with  vertex set $\mathcal{V}_1$ and arc set $\mathcal{E}_1$ which consists of the arcs in $\mathcal{G}$ connecting agents in $\mathcal{V}_1$. For convenience, for a given digraph $\mathcal{G} = (\mathcal{V}, \mathcal{E})$, we will identify a subgraph $\mathcal{G}_1 = (\mathcal{V}_1, \mathcal{E}_1)$ with its vertex set $\mathcal{V}_1$. Now we first introduce the definition of nodes below.

\begin{definition}\label{node} (Node)
Let $\mathcal{G}$ be a digraph. A subset $\mathcal{G}_1$ of vertices is called a node if it is strongly connected, i.e., for any subset $\mathcal{G}_2$ of $\mathcal{G}_1$, $\mathcal{G}_2$ is affected by $\mathcal{G}_1\setminus\mathcal{G}_2$. Moreover, if $\mathcal{G}_1$ is not affected by $\mathcal{G}\setminus\mathcal{G}_1$, we say $\mathcal{G}_1$ is a maximum node. 
\end{definition}

Notably, a node can be understood intuitively in a manner that a set of oscillators can be viewed as a "large" oscillator. Next, we can exploit the concept of node to simplify the structure of the digraph, and this can help us to catch the attraction effect more clearly in the underlying network topology.

\begin{lemma}\label{one_maximum}\cite{H-L-Z20}
Any digraph $\mathcal{G}$ contains at least one maximum node. A digraph $\mathcal{G}$ contains a unique maximum node if and only if $\mathcal{G}$ has a spanning tree.
\end{lemma}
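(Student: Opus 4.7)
The plan is to identify a maximum node with a source in the condensation digraph of $\mathcal{G}$. Recall that the condensation $\hat{\mathcal{G}}$ is obtained by contracting each maximal strongly connected component (SCC) of $\mathcal{G}$ to a single vertex, with an arc between two components whenever $\mathcal{G}$ has an arc between their members; the resulting graph is a finite directed acyclic graph. The first step I would take is to verify that the ``strongly connected'' clause in Definition \ref{node} is equivalent to the induced subgraph on $\mathcal{G}_1$ being strongly connected in the usual sense, and that the additional condition ``not affected by $\mathcal{G}\setminus\mathcal{G}_1$'' for a maximum node is precisely that the corresponding vertex of $\hat{\mathcal{G}}$ is a source.

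Existence of a maximum node then reduces to the elementary fact that every finite DAG has at least one source; one may also prove it directly by starting from any SCC and, as long as it receives an arc from outside, jumping to an SCC containing the tail of such an arc. Since the SCCs form a partial order under reachability, this process terminates at a sourceless SCC, which is a maximum node by the dictionary above.

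For the equivalence, I would argue both directions via reachability. If $\mathcal{G}$ has a spanning tree rooted at $r$, let $M$ be the SCC containing $r$. I would show $M$ is a maximum node: an incoming arc from $v \notin M$ to some $u \in M$, combined with strong connectivity of $M$ and with $v$ being reachable from $r$ (spanning tree), would yield mutual reachability between $r$ and $v$, forcing $v \in M$, a contradiction. Uniqueness follows because any other maximum node $M'$, being disjoint from $M$, still consists of vertices reachable from $r \in M$; any such path must enter $M'$ through an arc from outside, contradicting that $M'$ is unaffected from its complement. Conversely, if $M$ is the unique maximum node, then $\hat{\mathcal{G}}$ has a unique source. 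In any finite DAG with a unique source, every vertex is reachable from it (follow predecessor arcs from any vertex until a source is reached; it must be the unique one). Pulling this back to $\mathcal{G}$ yields a spanning tree rooted at any vertex of $M$.

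The main obstacle is not analytical but notational: cleanly translating between the paper's ad hoc definitions of node and maximum node and the classical language of SCCs and condensations is what carries all the content. Once this dictionary is in place, each assertion reduces to a standard observation about sources in finite directed acyclic graphs, with no delicate estimates involved.
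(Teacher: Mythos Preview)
The paper does not prove this lemma; it is quoted verbatim from \cite{H-L-Z20} with no argument supplied, so there is no in-paper proof to compare against. Your approach via the condensation digraph is correct and is the standard way to establish this fact: maximum nodes correspond exactly to source vertices of the DAG of strongly connected components, and the three assertions (existence of a source; unique source implies global reachability; a spanning-tree root lies in a source SCC and forces uniqueness) are all accurately stated.

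One small point worth making explicit in your write-up: a \emph{node} in Definition~\ref{node} is merely a strongly connected subset, not a priori a maximal SCC, so your identification of a maximum node with a vertex of $\hat{\mathcal{G}}$ needs the observation that any maximum node is automatically a full SCC. This is easy---if $\mathcal{G}_1$ were a proper subset of its SCC $C$, strong connectivity of $C$ would produce an arc into $\mathcal{G}_1$ from $C\setminus\mathcal{G}_1\subset\mathcal{G}\setminus\mathcal{G}_1$---but without it the dictionary you set up is incomplete.
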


\begin{lemma}\label{Node decomposition}\cite{H-L-Z20}(Node decomposition)
Let $\mathcal{G}$ be any digraph. Then we can decompose $\mathcal{G}$ to be a union as $\mathcal{G} = \bigcup_{i=0}^d (\bigcup_{j=1}^{k_i} \mathcal{G}_i^j)$ such that
\begin{enumerate}
\item $\mathcal{G}_0^j$ are the maximum nodes of $\mathcal{G}$, where $1\le j\le k_0$.
\item For any $p,q$ where $1 \le p\le d$ and $1\le q \le k_p$, $\mathcal{G}_p^q$ are the maximum nodes of $\mathcal{G} \setminus (\bigcup_{i=0}^{p-1} (\bigcup_{j=1}^{k_i} \mathcal{G}_i^j))$.
\end{enumerate}
\end{lemma}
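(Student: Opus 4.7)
The plan is to build the decomposition by an iterative peeling procedure: at each stage apply Lemma \ref{one_maximum} to the currently remaining subdigraph to extract all of its maximum nodes as one level of the decomposition, then delete them and repeat. The construction is essentially forced; the real work is bookkeeping — checking that the pieces produced are pairwise disjoint, that the process exhausts $\mathcal{V}$, and that ``maximum node'' is read in the correct (relative) sense at each stage.

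Concretely, I would set $\mathcal{H}_0 := \mathcal{G}$ and let $\mathcal{G}_0^1, \ldots, \mathcal{G}_0^{k_0}$ be the maximum nodes of $\mathcal{H}_0$; Lemma \ref{one_maximum} guarantees $k_0 \geq 1$. Inductively, once $\mathcal{H}_p$ has been defined and is nonempty, let $\mathcal{H}_{p+1}$ be the induced subdigraph on $\mathcal{H}_p \setminus \bigcup_{j=1}^{k_p} \mathcal{G}_p^j$, and let $\mathcal{G}_{p+1}^1, \ldots, \mathcal{G}_{p+1}^{k_{p+1}}$ be its maximum nodes, again produced by Lemma \ref{one_maximum}. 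Each maximum node is nonempty, so $|\mathcal{H}_{p+1}| < |\mathcal{H}_p|$, and finiteness of $\mathcal{V}$ gives a smallest $d \geq 0$ with $\mathcal{H}_{d+1} = \emptyset$. At that moment $\mathcal{V} = \bigcup_{i=0}^d \bigcup_{j=1}^{k_i} \mathcal{G}_i^j$; assertion (1) is the $p=0$ instance of the construction, and assertion (2) is immediate since $\mathcal{H}_p = \mathcal{G} \setminus \bigcup_{i=0}^{p-1}\bigcup_{j=1}^{k_i} \mathcal{G}_i^j$ by construction.

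It remains to verify disjointness of the family $\{\mathcal{G}_p^q\}$. Pieces at different levels are disjoint automatically because every level-$(p+1)$ piece lies in $\mathcal{H}_{p+1}$, which by definition excludes the earlier levels. Within a fixed level $p$, if two distinct maximum nodes $\mathcal{G}_p^{q_1}, \mathcal{G}_p^{q_2}$ of $\mathcal{H}_p$ had a nonempty intersection $S$, the strong-connectedness clause for the node $\mathcal{G}_p^{q_2}$ would force the subset $S$ to be influenced by $\mathcal{G}_p^{q_2} \setminus S$, while the maximum-node property of $\mathcal{G}_p^{q_1}$ would forbid $S \subset \mathcal{G}_p^{q_1}$ from receiving any influence from outside $\mathcal{G}_p^{q_1}$; this forces $\mathcal{G}_p^{q_2} \setminus S \subset \mathcal{G}_p^{q_1}$ and, by symmetry, $\mathcal{G}_p^{q_1} = \mathcal{G}_p^{q_2}$, a contradiction.

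The main obstacle I anticipate is semantic rather than combinatorial: at stages $p \geq 1$ the condition ``maximum node'' must be interpreted relative to the current subdigraph $\mathcal{H}_p$, not the original $\mathcal{G}$. A level-$p$ piece is allowed to receive arcs from already-peeled-off vertices in earlier levels, and indeed it must, otherwise it would have qualified as a maximum node of $\mathcal{G}$ itself and been placed at level $0$. This relative reading is precisely what produces the hierarchical structure advertised by the lemma and will make the induction used later (e.g.\ in the proof of Theorem \ref{enter_small}) work cleanly.
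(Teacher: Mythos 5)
The paper does not actually prove this lemma: it is quoted from \cite{H-L-Z20}, so there is no in-text argument to compare yours against. Your iterative peeling construction (extract all maximum nodes of the current remainder via Lemma \ref{one_maximum}, delete them, repeat until the vertex set is exhausted, with ``maximum node'' read relative to the current subdigraph) is the natural and standard proof of the statement and is essentially correct. Two small repairs are needed in the within-level disjointness step. First, if the intersection $S$ equals $\mathcal{G}_p^{q_2}$, i.e. $\mathcal{G}_p^{q_2}\subsetneq\mathcal{G}_p^{q_1}$, then $\mathcal{G}_p^{q_2}\setminus S$ is empty and the strong-connectivity clause cannot be applied to $S$ inside $\mathcal{G}_p^{q_2}$; in that case apply it instead to the subset $\mathcal{G}_p^{q_2}$ of the node $\mathcal{G}_p^{q_1}$, which makes $\mathcal{G}_p^{q_2}$ affected by $\mathcal{G}_p^{q_1}\setminus\mathcal{G}_p^{q_2}$ and contradicts its maximality. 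Second, in the remaining case your conclusion overshoots: strong connectivity only provides one arc from some $j\in\mathcal{G}_p^{q_2}\setminus S$ into $S\subset\mathcal{G}_p^{q_1}$, and maximality of $\mathcal{G}_p^{q_1}$ then forces $j\in\mathcal{G}_p^{q_1}$, hence $j\in S$, an immediate contradiction --- the claimed containment $\mathcal{G}_p^{q_2}\setminus S\subset\mathcal{G}_p^{q_1}$ and the symmetry step are neither justified by the single arc nor needed. With these trivial fixes the proof is complete.
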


\begin{remark}\label{w.l.o.g.} Lemma \ref{Node decomposition} shows a clear hierarchical structure on a general digraph. For the convenience of later analysis, we give some comments on important notations and properties to be used throughout the paper.
\begin{enumerate}
\item According to the definition of maximum node, we know $\mathcal{G}_p^q$ and $\mathcal{G}_p^{q'}$ do not influence each other for $1\le q \ne q' \le k_p$. Actually, $\mathcal{G}_p^q$ will only be affected by $\mathcal{G}_0$ and $\mathcal{G}_i^j$, where $1 \le i \le p-1, \ 1 \le j \le k_i$. Therefore without loss of generality, we may assume $k_i = 1$ for all $1 \le i \le d$ in the proof of our main theorem (see Theorem \ref{enter_small}). Thus, the decomposition can be expressed by
\[\mathcal{G} = \bigcup_{i=0}^d \mathcal{G}_i,\]
where $\mathcal{G}_p$ is a maximum node of $\mathcal{G}\setminus (\bigcup_{i=0}^{p-1} \mathcal{G}_i)$.

\item Given an oscillator $\theta_i^{k+1} \in \mathcal{G}_{k+1}$, we denote by $\bigcup_{j=0}^{k+1} \mathcal{N}_i^{k+1}(j)$ the set of neighbors of $\theta_i^{k+1}$, where $\mathcal{N}_i^{k+1}(j)$ represents the neighbors of $\theta_i^{k+1}$ in $\mathcal{G}_j$. The node decomposition and spanning tree structure in $\mathcal{G}$ guarantee that $\bigcup_{j=0}^k \mathcal{N}_p^{k+1}(j) \ne \emptyset$.
\end{enumerate}
\end{remark}

\subsection{Notations and local estimates} In this part, for simplicity, we introduce some notations, such as the extreme phase, phase diameter of $\mathcal{G}$ and the first $k+1$ nodes, frequency diameter, and cardinality of subdigraph:
\begin{align*}
&\theta_M =\max_{1\le k \le N} \{\theta_k\} = \max_{0\le i \le d} \max_{1 \le j \le N_i} \{\theta_j^i\}, \quad \theta_m = \min_{1\le k \le N} \{\theta_k\} = \min_{0\le i \le d} \min_{1 \le j \le N_i} \{\theta_j^i\},\\
&D(\theta) = \theta_M - \theta_m,\quad D_k(\theta) = \max_{0 \le i \le k} \max_{1 \le j \le N_i} \{\theta_j^i\} - \min_{0 \le i \le k} \min_{1 \le j \le N_i} \{\theta_j^i\}, \\
&\Omega_M =\max_{0\le i \le d} \max_{1 \le j \le N_i} \{\Omega_j^i\}, \quad \Omega_m =\min_{0\le i \le d} \min_{1 \le j \le N_i} \{\Omega_j^i\}, \quad D(\Omega) = \Omega_M - \Omega_m,\\
&N_i = |\mathcal{G}_i|, \quad S_k = \sum_{i=0}^k N_i, \quad 0 \le k \le d, \quad \sum_{i=0}^d N_i = N.
\end{align*}
Finally, we provide an a priori local-in-time estimate on the phase diameter to finish the section, which shows the diameter of the ensemble remains less than $\pi$ in short time.

\begin{lemma}\label{diameter_alpha}
Let $\theta_i$ be a solution to system \eqref{KuM} and suppose the initial phase diameter satisfies $D(\theta(0)) < \alpha < \gamma < \pi$. Then there exists time $\bar{t}$ such that
\begin{equation}\label{t_bar}
D(\theta(t)) < \alpha, \quad \forall \ t \in [0,\bar{t}),
\end{equation}
where $\alpha,\gamma$ are constants and $\bar{t} = \frac{\alpha - D(\theta(0))}{D(\Omega)}$.
\end{lemma}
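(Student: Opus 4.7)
\textbf{Proof plan for Lemma \ref{diameter_alpha}.}

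The plan is a standard maximum–minimum comparison argument combined with a continuity bootstrap that exploits the sign of $\sin$ on $(-\pi,\pi)$. Define $\theta_M(t)$ and $\theta_m(t)$ as above; both are Lipschitz continuous, hence differentiable almost everywhere, and at a point of differentiability $\dot\theta_M = \dot\theta_{i_*}$ where $i_*$ is an index attaining the maximum (and symmetrically for $\theta_m$). I would work with the upper Dini derivative $D^+ D(\theta)$ to avoid worrying about index crossings, but morally one just differentiates $\theta_M-\theta_m$.

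First I would observe that as long as $D(\theta(t))<\pi$, for every neighbor $j\in\mathcal N_{i_*}$ one has $\theta_j-\theta_M\in(-\pi,0]$, so $\sin(\theta_j-\theta_M)\le 0$. Plugging into \eqref{KuM} gives
\[
\dot\theta_M(t)\ \le\ \Omega_M,\qquad \dot\theta_m(t)\ \ge\ \Omega_m,
\]
and subtracting yields the differential inequality $D^+ D(\theta)(t)\le D(\Omega)$. Integrating produces the linear-in-time bound
\[
D(\theta(t))\ \le\ D(\theta(0))+D(\Omega)\,t,
\]
valid on any interval where $D(\theta)<\pi$ is maintained.

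Now the bootstrap: since $D(\theta(0))<\alpha<\gamma<\pi$, by continuity there exists a maximal $T>0$ such that $D(\theta(t))<\pi$ on $[0,T)$. On this interval the linear bound holds, so $D(\theta(t))<\alpha$ whenever $t<\bar t=(\alpha-D(\theta(0)))/D(\Omega)$. Since $\alpha<\pi$, such $t$ automatically lies inside $[0,T)$, so the hypothesis used to derive the inequality is never violated. This yields \eqref{t_bar}. (A trivial edge case: if $D(\Omega)=0$, define $\bar t=+\infty$ and the same argument gives $D(\theta(t))\le D(\theta(0))<\alpha$ for all $t\ge 0$; the quantity $\gamma$ plays no role here and is only recorded for use in later sections.)

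The only mildly delicate point is the non-smoothness of $\theta_M$ and $\theta_m$; I would handle it either by the Dini-derivative formalism or by noting that at any time $\theta_M$ agrees with one of the smooth functions $\theta_i$, so the differential inequality for $\theta_M$ holds in the sense of the upper-right Dini derivative, which is enough for Grönwall-type integration. No novel difficulty is expected.
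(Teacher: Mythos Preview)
Your proposal is correct and follows essentially the same approach as the paper: bound $\dot\theta_M\le\Omega_M$ and $\dot\theta_m\ge\Omega_m$ using the sign of the sine interaction terms while the diameter stays below $\pi$, then integrate to get the linear growth bound and read off $\bar t$. Your version is in fact slightly more careful than the paper's, since you explicitly handle the non-smoothness of $\theta_M,\theta_m$ via Dini derivatives, make the bootstrap (continuation) argument explicit, and treat the degenerate case $D(\Omega)=0$.
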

\begin{proof}
According to system \eqref{KuM}, we have
\begin{equation*}
\dot{\theta}_M = \Omega_M + \kappa \sum_{j \in \mathcal{N}_M} \sin(\theta_j - \theta_M), \qquad \dot{\theta}_m = \Omega_m + \kappa \sum_{j \in \mathcal{N}_m} \sin(\theta_j - \theta_m).
\end{equation*}
When the phase diameter is located in $[D(\theta(0)), \alpha]$, it is obvious that
\begin{equation*}
\sum_{j \in \mathcal{N}_M} \sin(\theta_j - \theta_M) \le 0, \quad \sum_{j \in \mathcal{N}_m} \sin(\theta_j - \theta_m) \ge 0.
\end{equation*}
Hence, the dynamics of phase diameter of all nodes can be estimated as follows
\begin{equation}\label{G-1}
\dot{D}(\theta(t)) = \frac{d}{dt} (\theta_M - \theta_m) \le D(\Omega).
\end{equation}
That is to say, the growth of phase diameter is less than the linear growth with slope $D(\Omega)$ if $D(\theta(t)) \in [D(\theta(0)), \alpha]$. Set $\bar{t} = \frac{\alpha - D(\theta(0))}{D(\Omega)}$. Then according to \eqref{G-1}, it can be seen that $D(\theta(t))$ is less than $\alpha$ before time $\bar{t}$, i.e.,
\begin{equation*}
D(\theta(t)) < \alpha, \quad \forall \ t \in [0,\bar{t}),
\end{equation*}
\end{proof}

\section{Strong connected case}\label{sec:3}
\setcounter{equation}{0}
We will first study the special case when the network is strongly connected. Without loss of generality, we denote the strong connected graph by $\mathcal{G}_0$. According to Definition \ref{node}, Lemma \ref{one_maximum} and Lemma \ref{Node decomposition}, this means the network contains only one maximum node. Then, we will show the emergence of complete synchronization in the strong connected case. We now introduce an algorithm to construct a proper convex combination of the oscillators, which can involve the dissipation from interaction of general network. More precisely, the algorithm for $\mathcal{G}_0$ consists of the following three steps:\newline

\noindent\textbf{Step 1.} For any given time $t$, we reorder the oscillator indexes to make the oscillator phases from minimum to maximum. More specifically, by relabeling the agents at time $t$, we set
\begin{equation}\label{well_order}
\theta_1^0(t) \le \theta_2^0(t) \le \ldots \le \theta_{N_0}^0(t).
\end{equation}
In order to introduce the following steps, we first provide the process of iterations for $\bar{\mathcal{L}}_k^{N_0}(\bar{C}_{k,N_0})$ and $\underline{\mathcal{L}}_1^l(\underline{C}_{1,l})$ as follows:\newline

\noindent $\bullet$($\mathcal{A}_1$): If $\bar{\mathcal{L}}_k^{N_0}(\bar{C}_{k,N_0})$ is not a general root, then we construct
\[\bar{\mathcal{L}}_{k-1}^{N_0}(\bar{C}_{k-1,N_0}) = \frac{\bar{a}_{k-1} \bar{\mathcal{L}}_k^{N_0}(\bar{C}_{k,N_0}) + \theta^0_{k-1}}{\bar{a}_{k-1} + 1}.\]

\noindent $\bullet$($\mathcal{A}_2$): If $\underline{\mathcal{L}}_1^l(\underline{C}_{1,l})$ is not a general root, then we construct
\[\underline{\mathcal{L}}_1^{l+1}(\underline{C}_{1,l+1}) = \frac{\underline{a}_{l+1} \underline{\mathcal{L}}_1^l(\underline{C}_{1,l}) + \theta^0_{l+1}}{\underline{a}_{l+1} + 1}\]

\noindent\textbf{Step 2.} According to the strong connectivity of $\mathcal{G}_0$, we immediately know that $\bar{\mathcal{L}}_1^{N_0}(\bar{C}_{1,N_0})$ is a general root, and $\bar{\mathcal{L}}_k^{N_0}(\bar{C}_{k,N_0})$ is not a general root for $k >1$. Therefore, we may start from $\theta^0_{N_0}$ and follow the process $\mathcal{A}_1$ to construct $\bar{\mathcal{L}}_k^{N_0}(\bar{C}_{k,N_0})$ until $k=1$.\newline

\noindent\textbf{Step 3.} Similarly, we know that $\underline{\mathcal{L}}_1^{N_0}(\underline{C}_{1,N_0})$ is a general root and $\underline{\mathcal{L}}_1^l(\underline{C}_{1,l})$ is not a general root for $l < N_0$. Therefore, we may start from $\theta^0_1$ and follow the process $\mathcal{A}_2$ until $l=N_0$.\newline  

We emphasize that the order of the oscillators will change along time $t$, but at each time $t$, the above algorithm works. For convenience, the algorithm from Step 1 to Step 3 will be referred as Algorithm $\mathcal{A}$. Then, according to Algorithm $\mathcal{A}$, we will show a monotone property about the function $\sin x$, and provide a priori estimates which will be crucially used later in the proof of uniform boundness of phase diameter.

\begin{lemma}\label{eta_sin_inequality}
Let $\theta_i = \{\theta^0_i\}$ be a solution to system \eqref{KuM} with srong connected network $\mathcal{G}_0$.  Moreover at time $t$, for the digraph $\mathcal{G}_0$, we also assume that the oscillators are well-ordered as \eqref{well_order}, the phase diameter and the quantity $\eta$ satisfiy the following condition:
\begin{equation*}
D_0(\theta(t)) < \gamma, \quad \eta > \max \left\{\frac{1}{\sin \gamma}, \frac{2}{1 - \frac{\alpha}{\gamma}} \right\},
\end{equation*}
where $\alpha, \gamma$ are given in the condition \eqref{condition_1}.
Then at time $t$, we have
 \begin{equation*}
 \begin{cases}
 \displaystyle \sum_{i=n}^{N_0} ( \eta^{i-n} \underset{j\le i}{\min_{j \in \mathcal{N}^0_i(0)}} \sin (\theta_j^0 - \theta_i^0)) \le \sin(\theta^0_{\bar{k}_n} - \theta^0_{N_0}), \quad \bar{k}_n = \min_{j \in \cup_{i=n}^{N_0} \mathcal{N}^0_i(0)} j, \quad 1\le n \le N_0. \\
 \displaystyle \sum_{i=1}^n (\eta^{n-i} \underset{j \ge i}{\max_{j \in \mathcal{N}^0_i(0)}} \sin (\theta_j^0 - \theta_i^0)) \ge \sin (\theta^0_{\underline{k}_n} - \theta_1^0), \quad \underline{k}_n = \max_{j \in \cup_{i=1}^n \mathcal{N}^0_i(0)} j, \quad 1 \le n \le N_0.
 \end{cases}
 \end{equation*}
\end{lemma}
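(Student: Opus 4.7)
The plan is to prove both inequalities by induction on $n$: the first by \emph{downward} induction from $n = N_0$ to $n = 1$, and the second by \emph{upward} induction from $n = 1$ to $n = N_0$. The two arguments are formally symmetric, so I describe the first in detail. The key structural input is that strong connectivity of $\mathcal{G}_0$ forces the tail $\{n,\ldots,N_0\}$ to receive an arc from its complement for every $n \ge 2$, so $\bar{k}_n \le n-1$; in particular the index $\bar{k}_n$ is always realized by a neighbor $j \le i$ of some $i \ge n$, matching the restriction $j \le i$ inside the min. The base case $n = N_0$ is immediate since $\sin(\theta^0_{\bar{k}_{N_0}} - \theta^0_{N_0})$ is one of the values over which the single minimum on the left is taken.

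For the inductive step I would rewrite $\mathrm{Sum}_n = \min_{j \le n,\, j \in \mathcal{N}^0_n(0)} \sin(\theta^0_j - \theta^0_n) + \eta\,\mathrm{Sum}_{n+1}$ and split on whether $\bar{k}_n = \bar{k}_{n+1}$ or $\bar{k}_n < \bar{k}_{n+1}$. In the first case both the $i=n$ term and $\mathrm{Sum}_{n+1}$ are non-positive, so the inductive hypothesis together with $\eta \ge 1$ gives $\mathrm{Sum}_n \le \mathrm{Sum}_{n+1} \le \sin(\theta^0_{\bar{k}_{n+1}} - \theta^0_{N_0})$ at once. In the second case the new extremal index must come from $\mathcal{N}^0_n(0)$ and satisfies $\bar{k}_n \le n$, so the $i=n$ term is at most $\sin(\theta^0_{\bar{k}_n} - \theta^0_n)$. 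Setting $a = \theta^0_n - \theta^0_{\bar{k}_n}$, $b = \theta^0_{N_0} - \theta^0_{\bar{k}_n}$, $c = \theta^0_{N_0} - \theta^0_{\bar{k}_{n+1}}$, a short check gives $0 \le b - a \le c \le b < \gamma$, and the target reduces to $\sin b - \sin a \le \eta \sin c$.

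This reduction is closed by a short analytic estimate. The identity $\sin b - \sin a = 2 \cos\frac{a+b}{2} \sin\frac{b-a}{2}$ together with $a \ge 0$ yields $\sin b - \sin a \le \sin(b - a)$. When $c \le \pi/2$, monotonicity of $\sin$ on $[0,\pi/2]$ and $b - a \le c$ give $\sin(b-a) \le \sin c \le \eta \sin c$; when $c \in (\pi/2, \gamma)$, the decreasing branch gives $\sin c \ge \sin\gamma$, and the hypothesis $\eta > 1/\sin\gamma$ yields $\eta \sin c > 1 \ge \sin(b-a)$. The second inequality follows from the analogous upward induction via $\mathrm{Sum}_{n+1} = \eta\,\mathrm{Sum}_n + \max_{j \ge n+1,\, j \in \mathcal{N}^0_{n+1}(0)} \sin(\theta^0_j - \theta^0_{n+1})$; every term in the sum is non-negative, strong connectivity forces $\underline{k}_n \ge n+1$, and the two-case split on $\underline{k}_{n+1}$ versus $\underline{k}_n$ leads to the same reduction and the same analytic bound with symmetric roles.

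The main obstacle is exactly the non-monotonicity of $\sin$ on $(0,\gamma)$ when $\gamma > \pi/2$: one cannot compare sines of different arguments directly. The two-case split on the position of $c$ relative to $\pi/2$ sidesteps this, and the assumption $\eta > 1/\sin\gamma$ is activated precisely on the decreasing branch; the other piece $\eta > 2/(1 - \alpha/\gamma)$ of the hypothesis on $\eta$ is not needed for this lemma but is reserved for propagating the diameter bounds at a later stage of the argument.
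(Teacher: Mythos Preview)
Your argument is correct. The paper's route is organized differently: for each fixed $n$ it first extracts a chain of indices $n \le l_0 < l_1 < \cdots < l_p = N_0$ (chosen so that $\bar k_n \in \mathcal{N}^0_{l_0}(0)$ and each tail $\{l_r+1,\ldots,N_0\}$ receives an arc from below at $l_{r+1}$), drops all other terms of the sum as nonpositive, and then collapses the chain from the top pairwise using exactly the two-case estimate you isolate (their (3.3)--(3.4) and the concavity step $\sin(-A)+\sin(-B)\le \sin(-(A+B))$). Your downward induction on $n$ replaces this explicit chain construction by the single recurrence $\mathrm{Sum}_n=m_n+\eta\,\mathrm{Sum}_{n+1}$ together with the dichotomy on $\bar k_n$ versus $\bar k_{n+1}$; the analytic core is identical, but your packaging is cleaner and avoids having to describe the chain. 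One small point worth making explicit in your write-up: when the set $\{j\le n:\ j\in\mathcal N^0_n(0)\}$ is empty you are necessarily in the case $\bar k_n=\bar k_{n+1}$ (since $\bar k_{n+1}\le n$ by strong connectivity), so the convention $m_n=0$ is harmless there; you use this implicitly. Your observation that the second hypothesis on $\eta$ is not used here matches the paper.
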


\begin{proof}
We will only prove the first inequality, the second relation can be proved in a similar manner. In fact, if $N_0=1$, i.e., $N_0$ is a (general) root, we are done. Now we consider the case $N_0 \ge 2$. Due to the strong connectivity of the digraph $\mathcal{G}_0$, $\bar{\mathcal{L}}_1^{N_0}(\bar{C}_{1,N_0})$ is a general root while $\bar{\mathcal{L}}_k^{N_0}(\bar{C}_{k,N_0})$ is not a general root for $k >1$. 

For any given $n \in [1,N_0]$,  we have $\bar{k}_n = \min\limits_{j \in \cup_{i=n}^{N_0} \mathcal{N}^0_i(0)} j$. Hence, there exists $l_0 \in [n,N_0]$ such that $\bar{k}_n \in \mathcal{N}^0_{l_0}(0)$ due to the fact  $\bar{k}_n \in \cup_{i=n}^{N_0} \mathcal{N}^0_i(0)$. For $l_0$, since $\bar{\mathcal{L}}_{l_0+1}^{N_0}(\bar{C}_{l_0+1,N_0})$ is not a general root, there exist $j_0 \le l_0$ and $l_1 \in [l_0+1, N_0]$ such that $j_0 \in \mathcal{N}^0_{l_1}(0)$. For $l_1$, as $\bar{\mathcal{L}}_{l_1+1}^{N_0}(\bar{C}_{l_1+1,N_0})$ is not a general root, there exist $j_1 \le l_1$ and $l_2 \in [l_1+1, N_0]$ such that $j_1 \in \mathcal{N}_{l_2}^0(0)$. we repeat the process until find some $l_p =N_0$ and $j_{p-1} \le l_{p-1}$ such that $j_{p-1} \in \mathcal{N}^0_{l_p}(0) = \mathcal{N}_{N_0}^0(0)$.
Obviously, we have
\begin{equation}\label{A-1}
\begin{aligned}
\sum_{i=n}^{N_0} ( \eta^{i-n} \underset{j\le i}{\min_{j \in \mathcal{N}^0_i(0)}} \sin (\theta_j^0 - \theta_i^0)) &\le \overbrace{\eta^{N_0-n} \sin (\theta_{j_{p-1}}^0 -\theta^0_{N_0}) + \eta^{l_{p-1} -n} \sin (\theta^0_{j_{p-2}} - \theta^0_{l_{p-1}})}^{\mathcal{I}_1} \\
&+  \eta^{l_{p-2} -n} \sin (\theta^0_{j_{p-3}} - \theta^0_{l_{p-2}}) + \cdots + \eta^{l_{2} -n} \sin (\theta^0_{j_{1}} - \theta^0_{l_{2}}) \\
&+\eta^{l_{1} -n} \sin (\theta^0_{j_{0}} - \theta^0_{l_{1}}) + \eta^{l_{0} -n} \sin (\theta^0_{\bar{k}_n}} - \theta^0_{l_{0}).
\end{aligned}
\end{equation}
where we have the following relations
\begin{equation*}
j_k \le l_k, \quad l_k < l_{k+1}, \quad k =0,1,\ldots,p-1.
\end{equation*}
 In the following, we plan to add all the terms on the right-hand side of \eqref{A-1} together to yield the desired estimate. We only consider the case $\gamma > \frac{\pi}{2}$, and the situation $\gamma \le \frac{\pi}{2}$ can be similarly dealt with. We first deal with $\mathcal{I}_1$ in \eqref{A-1}. If $\theta^0_N - \theta^0_{j_{p-1}} \le \frac{\pi}{2}$, we obtain that $0 < \theta^0_N - \theta^0_{l_{p-1}} \le \theta^0_N - \theta^0_{j_{p-1}} \le \frac{\pi}{2}$ due to $j_{p-1} \le l_{p-1}$. Hence, according to $l_{p-1} < N_0$, the following assertion can be obtained
 \begin{equation}\label{3-3}
 \eta^{N_0-n} \sin (\theta^0_{j_{p-1}} -\theta^0_N) \le \eta^{l_{p-1}-n} \sin (\theta^0_{j_{p-1}} -\theta^0_N) \le \eta^{l_{p-1}-n} \sin (\theta^0_{l_{p-1}} -\theta^0_N),
 \end{equation}
On the other hand, if $\frac{\pi}{2} < \theta^0_N - \theta^0_{j_{p-1}} \le D_0(\theta(t)) < \gamma$. It's clear that $\sin (\theta^0_N - \theta^0_{j_{p-1}}) > \sin \gamma$. Then according to the strict inequality $l_{p-1} < N_0$ and $\eta > \frac{1}{\sin \gamma} \ge 1$, we can obtain that
 \begin{equation}\label{3-4}
 \begin{aligned}
\eta^{N_0-n} \sin (\theta^0_{j_{p-1}} -\theta^0_{N_0}) \le -  \eta^{N_0-n-1} \eta \sin \gamma \le - \eta^{N_0-n-1} \le -\eta^{l_{p-1}-n} \le \eta^{l_{p-1}-n} \sin (\theta^0_{l_{p-1}} -\theta^0_N),
 \end{aligned}
 \end{equation}
 where the last inequality holds due to the fact $\sin x \ge -1$. Therefore, combining above estimates \eqref{3-3} and \eqref{3-4}, we obtain that 
 \begin{equation}\label{A-2}
 \eta^{N_0-n} \sin (\theta^0_{j_{p-1}} -\theta^0_{N_0}) \le \eta^{l_{p-1}-n} \sin (\theta^0_{l_{p-1}} -\theta^0_{N_0}).
 \end{equation}
  Next, we apply \eqref{A-2} and the concave property of $\sin x$ in half circle to estimate the term $\mathcal{I}_1$ as follows: 
 \begin{equation}\label{A-3}
 \mathcal{I}_1 \le \eta^{l_{p-1}-n} \sin (\theta^0_{l_{p-1}} -\theta^0_N) + \eta^{l_{p-1} -n} \sin (\theta^0_{j_{p-2}} - \theta^0_{l_{p-1}}) \le \eta^{l_{p-1}-n} \sin(\theta^0_{j_{p-2}} - \theta^0_N).
 \end{equation}
 Finally, we repeat the similar argument in \eqref{A-2} and \eqref{A-3} to obtain that
\begin{equation*}
\begin{aligned}
\sum_{i=n}^{N_0} ( \eta^{i-n} \underset{j\le i}{\min_{j \in \mathcal{N}^0_i(0)}} \sin (\theta^0_j - \theta^0_i)) &\le \eta^{l_0-n} \sin(\theta^0_{l_0}- \theta^0_{N_0}) + \eta^{l_0 -n} \sin (\theta^0_{\bar{k}_n} - \theta^0_{l_0})\\
& \le \eta^{l_0 -n} \sin (\theta^0_{\bar{k}_n} - \theta^0_{N_0}) \le \sin (\theta^0_{\bar{k}_n} - \theta^0_{N_0}),
\end{aligned}
\end{equation*}
where the last inequality holds since $l_0 \ge n$. Therefore we derive the desired result.
\end{proof} 

Based on a priori estimates in Lemma \ref{eta_sin_inequality}, we next design a proper convex combination so that we can capture the dissipation structure. Recall the strongly connected ensemble $\mathcal{G}_0$, and denote by $\theta_i^0\  (i=1,2,\ldots,N_0)$ the members in $\mathcal{G}_0$. Now we assume that at time $t$, the oscillators in $\mathcal{G}_0$ are well-ordered as follows, 
\begin{equation*}
\theta^0_1(t) \le \theta^0_2(t) \le \ldots \le \theta^0_{N_0}(t).
\end{equation*}
Then we apply the process $\mathcal{A}_1$ from $\theta^0_{N_0}$ to $\theta^0_1$ and the process $\mathcal{A}_2$ from $\theta^0_1$ to $\theta^0_{N_0}$ to respectively construct 
\begin{equation}\label{coeffi_0}
\begin{aligned}
&\bar{\mathcal{L}}_{k-1}^{N_0}(\bar{C}_{k-1,N_0}) \ \mbox{with} \ \bar{a}^0_{N_0} = 0, \ \bar{a}^0_{k-1} =\eta (2N_0 -k +2)(\bar{a}^0_k + 1), \quad 2 \le k \le N_0,\\
&\underline{\mathcal{L}}_1^{k+1}(\underline{C}_{1,k+1}) \ \mbox{with} \ \underline{a}^0_1 = 0, \ \underline{a}^0_{k+1} = \eta(k+1+N_0)(\underline{a}^0_k + 1), \quad 1\le k \le N_0-1,
\end{aligned}
\end{equation}
where $N_0$ is the cardinality of $\mathcal{G}_0$ and $\eta$ is given in the condition \eqref{condition_1}. By induction, we can derive explict expressions about the constructed coefficients:
\begin{equation}\label{permutation_0}
\begin{aligned}
&\bar{a}^0_{k-1} = \sum_{j=1}^{N_0 - k+1} \eta^j A(2N_0-k+2,j), \quad 2 \le k \le N_0, \\
&\underline{a}^0_{k+1} = \sum_{j=1}^k \eta^j A(k+1+ N_0,j), \quad 1 \le k \le N_0-1.
\end{aligned}
\end{equation}
Note that $\bar{a}^0_{N_0+1-i} = \underline{a}^0_i, \ i=1,2\ldots,N_0$. And we set
\begin{equation}\label{abbreviation_0}
\bar{\theta}^0_k := \bar{\mathcal{L}}_k^{N_0}(\bar{C}_{k,N_0}),\quad \underline{\theta}^0_k := \underline{\mathcal{L}}_1^k(\underline{C}_{1,k}), \quad 1 \le k \le N_0.
\end{equation}
We define $Q^0 = \bar{\theta}_0 - \underline{\theta}_0$ where $\bar{\theta}_0 = \bar{\theta}^0_1$ and $\underline{\theta}_0 = \underline{\theta}^0_{N_0}$. Note that $Q^0(t)$ is Lipschitz continuous with respect to $t$.
We then establish the comparison relation between $Q^0$ and the phase diameter $D_0(\theta)$ of  $\mathcal{G}_0$ in the following lemma.

\begin{lemma}\label{beta_QD}
Let $\theta_i=\{\theta^0_i\}$ be a solution to system \eqref{KuM} with strong connected digraph $\mathcal{G}_0$. Assume that for the group $\mathcal{G}_0$, the coefficients $\bar{a}_k^0$'s and $\underline{a}_k^0$'s satisfy the scheme \eqref{coeffi_0}. Then at each time $t$, we have the following relation
\begin{equation*}
\beta D_0(\theta(t)) \le Q^0(t) \le D_0(\theta(t)),  \quad \beta = 1 - \frac{2}{\eta},
\end{equation*}
where $\eta$ satisfies the condition \eqref{condition_1}. 
\end{lemma}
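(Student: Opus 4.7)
The plan is to handle the two inequalities separately, since the upper bound is essentially bookkeeping while the lower bound contains the real content. For $Q^0 \le D_0(\theta)$, observe that both $\bar{\theta}_0 = \bar{\mathcal{L}}_1^{N_0}(\bar{C}_{1,N_0})$ and $\underline{\theta}_0 = \underline{\mathcal{L}}_1^{N_0}(\underline{C}_{1,N_0})$ are, by their construction in \eqref{coeffi_0}, convex combinations of the ordered phases $\theta_1^0 \le \cdots \le \theta_{N_0}^0$. Hence both lie in $[\theta_1^0,\theta_{N_0}^0]$, and $|\bar{\theta}_0 - \underline{\theta}_0| \le D_0(\theta)$ follows; that the sign is actually nonnegative (so the absolute value can be dropped) will emerge as a free byproduct of the lower-bound analysis.

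For the lower bound, my plan is to rewrite
\[D_0(\theta(t)) - Q^0(t) = (\theta_{N_0}^0 - \bar{\theta}_0) + (\underline{\theta}_0 - \theta_1^0),\]
and to show that each of the two summands is bounded by $\tfrac{1}{\eta}D_0(\theta)$. Summing then yields $Q^0 \ge (1-2/\eta)D_0(\theta) = \beta D_0(\theta)$, and since $\eta > 2$ (guaranteed by $\eta > 2/(1-\alpha/\gamma)$ in \eqref{condition_1}), this simultaneously delivers $Q^0 \ge 0$. By the obvious symmetry between the schemes $\mathcal{A}_1$ and $\mathcal{A}_2$, it suffices to treat $\theta_{N_0}^0 - \bar{\theta}_0$.

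For this, I would exploit the one-step recursion from $\mathcal{A}_1$: a direct computation gives
\[\bar{\theta}_k^0 - \bar{\theta}_{k-1}^0 = \frac{\bar{\theta}_k^0 - \theta_{k-1}^0}{\bar{a}_{k-1}^0+1} \le \frac{D_0(\theta)}{\bar{a}_{k-1}^0+1},\]
where I use that $\bar{\theta}_k^0 \le \theta_{N_0}^0$ and $\theta_{k-1}^0 \ge \theta_1^0$ by the well-ordering. Telescoping from $\bar{\theta}_{N_0}^0 = \theta_{N_0}^0$ down to $\bar{\theta}_1^0 = \bar{\theta}_0$ produces
\[\theta_{N_0}^0 - \bar{\theta}_0 \le D_0(\theta)\sum_{j=1}^{N_0-1}\frac{1}{\bar{a}_j^0+1}.\]

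The only place that requires (minor) care is bounding this sum by $1/\eta$. The scheme \eqref{coeffi_0} yields $\bar{a}_{k-1}^0+1 > \eta(N_0+2)(\bar{a}_k^0+1)$ for $2 \le k \le N_0$, so inductively $\bar{a}_j^0+1 \ge [\eta(N_0+2)]^{N_0-j}$; comparing the sum to the corresponding geometric tail gives
\[\sum_{j=1}^{N_0-1}\frac{1}{\bar{a}_j^0+1} < \frac{1}{\eta(N_0+2)-1} \le \frac{1}{\eta},\]
the final inequality reducing to $\eta(N_0+1) \ge 1$, which follows from $\eta > 1/\sin\gamma > 1$ in \eqref{condition_1}. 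Repeating the same telescoping-plus-geometric-sum argument for the mirror quantity $\underline{\theta}_0 - \theta_1^0$ via the $\underline{a}_k^0$ branch of \eqref{coeffi_0} finishes the job. The main (and only) obstacle is the careful accounting of this geometric decay of the constructed coefficients; once the one-step identity above is in hand, no analytic subtlety remains.
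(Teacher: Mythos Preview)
Your proof is correct and arrives at the same conclusion as the paper, but the execution differs in a way worth noting. The paper expands $\bar{\theta}_0$ and $\underline{\theta}_0$ fully into their nested-product form
\[
\bar{\theta}_0 = \frac{\theta_1^0}{\bar a_1^0+1} + \sum_{i=2}^{N_0-1}\frac{\prod_{l=1}^{i-1}\bar a_l^0}{\prod_{l=1}^{i}(\bar a_l^0+1)}\theta_i^0 + \frac{\prod_{l=1}^{N_0-1}\bar a_l^0}{\prod_{l=1}^{N_0-1}(\bar a_l^0+1)}\theta_{N_0}^0,
\]
then bounds each of the $N_0-1$ coefficients of $\theta_i^0-\theta_{N_0}^0$ uniformly by $\tfrac{1}{\eta(N_0+2)+1}$ (using only $\bar a_l^0 \ge \bar a_{N_0-1}^0 = \eta(N_0+2)$), arriving at $\theta_{N_0}^0 - \bar\theta_0 \le \tfrac{N_0-1}{\eta(N_0+2)+1}D_0(\theta) \le \tfrac{1}{\eta}D_0(\theta)$. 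Your route sidesteps the explicit expansion entirely: the one-step identity $\bar\theta_k^0 - \bar\theta_{k-1}^0 = (\bar\theta_k^0 - \theta_{k-1}^0)/(\bar a_{k-1}^0+1)$ plus telescoping gives the same sum $\sum_{j=1}^{N_0-1}(\bar a_j^0+1)^{-1}$, and you then exploit the full geometric decay $\bar a_j^0+1 \ge [\eta(N_0+2)]^{N_0-j}$ rather than the crude uniform bound. This buys you a sharper intermediate estimate (the sum is $\le \tfrac{1}{\eta(N_0+2)-1}$ rather than $\tfrac{N_0-1}{\eta(N_0+2)+1}$) and a cleaner argument that never writes out the product coefficients, at the cost of a short induction for the geometric growth. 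Both routes are equally valid; yours is arguably tidier.
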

\begin{proof}
From the convex combination structure of $\bar{\theta}_0$ and $\underline{\theta}_0$, we immediately have
\begin{equation*}
Q^0(t) = \bar{\theta}_0 - \underline{\theta}_0 \le \theta^0_{N_0}(t) - \theta^0_1(t) = D_0(\theta(t)).
\end{equation*}
We now prove the left part of the desired relation. In fact, we have the following estimate about $Q^0(t)$:
\begin{equation}\label{QD_01}
\begin{aligned}
Q^0(t) &= \bar{\theta}_0(t) - \underline{\theta}_0(t) = \bar{\theta}_0(t) - \theta_{N_0}^0(t) + \theta_{N_0}^0(t) - \theta_1^0(t) + \theta^0_1(t) - \underline{\theta}_0(t) \\
&= \theta_{N_0}^0(t) - \theta_1^0(t) + \bar{\theta}_0(t) - \theta_{N_0}^0(t) + \theta^0_1(t) - \underline{\theta}_0(t) \\
& = \theta_{N_0}^0(t) - \theta_1^0(t) + \left( \frac{\theta_1^0(t)}{\bar{a}_1^0 + 1} + \frac{\prod_{l=1}^{N_0-1}\bar{a}_l^0}{\prod_{l=1}^{N_0-1}(\bar{a}^0_l+1)}\theta^0_{N_0}(t) + \sum_{i=2}^{N_0-1} \frac{\prod_{l=1}^{i-1} \bar{a}^0_l}{\prod_{l=1}^i(\bar{a}^0_l + 1)} \theta_i^0(t) - \theta^0_{N_0}(t)\right)\\
&+ \left(\theta^0_1(t) - \frac{\theta^0_{N_0}(t)}{\underline{a}_{N_0}^0+1} - \frac{\prod_{l=2}^{N_0}\underline{a}_l^0}{\prod_{l=2}^{N_0} (\underline{a}_l^0 +1)}\theta^0_1(t) - \sum_{i=2}^{N_0-1} \frac{\prod_{l=i+1}^{N_0}\underline{a}_l^0}{\prod_{l=i}^{N_0} (\underline{a}_l^0 +1)} \theta^0_i(t)\right)\\
& = \theta_{N_0}^0(t) - \theta_1^0(t) + \frac{1}{\bar{a}_1^0 + 1}(\theta^0_1(t) - \theta^0_{N_0}(t)) + \sum_{i=2}^{N_0-1} \frac{\prod_{l=1}^{i-1} \bar{a}^0_l}{\prod_{l=1}^i(\bar{a}^0_l + 1)}( \theta_i^0(t) - \theta^0_{N_0}(t)) \\
&+  \frac{1}{\underline{a}_{N_0}^0+1}(\theta^0_1(t) - \theta^0_{N_0}(t)) + \sum_{i=2}^{N_0-1} \frac{\prod_{l=i+1}^{N_0}\underline{a}_l^0}{\prod_{l=i}^{N_0} (\underline{a}_l^0 +1)}(\theta^0_1(t)- \theta^0_i(t)),
\end{aligned}
\end{equation}
where we apply the property that the coefficients sum of convex combination structure of $\bar{\theta}_0$ and $\underline{\theta}_0$ are respectively equal to $1$.
According to the design of coefficients \eqref{coeffi_0} and \eqref{permutation_0}, it is known that
\[\bar{a}^0_{N_0+1-i} = \underline{a}^0_i, \quad i=1,2\ldots,N_0.\]
Thus, we immediately have
\begin{equation}\label{QD_02}
\frac{\prod_{l=i+1}^{N_0}\underline{a}_l^0}{\prod_{l=i}^{N_0}(\underline{a}_l^0 +1)} = \frac{\prod_{l=1}^{N_0-i}\bar{a}_l^0}{\prod_{l=1}^{N_0+1-i}(\bar{a}_l^0 +1)}.
\end{equation}
Then we combine \eqref{QD_01} and \eqref{QD_02}to obtain that
\begin{align*}
Q^0(t) &\ge \theta_{N_0}^0(t) - \theta_1^0(t) + \frac{1}{\bar{a}_1^0 + 1}(\theta^0_1(t) - \theta^0_{N_0}(t)) +\sum_{i=2}^{N_0-1} \frac{\prod_{l=1}^{i-1} \bar{a}^0_l}{\prod\limits_{l=1}^i(\bar{a}^0_l + 1)}( \theta_1^0(t) - \theta^0_{N_0}(t))\\
&+ \frac{1}{\bar{a}_{1}^0+1}(\theta^0_1(t) - \theta^0_{N_0}(t)) + \sum_{i=2}^{N_0-1}\frac{\prod_{l=1}^{N_0-i}\bar{a}_l^0}{\prod_{l=1}^{N_0+1-i}(\bar{a}_l^0 +1)} (\theta^0_1(t) - \theta^0_{N_0}(t))\\
& = \theta_{N_0}^0(t) - \theta_1^0(t) - \left( \frac{2}{\bar{a}_1^0 + 1} + \sum_{i=2}^{N_0-1} \frac{\prod_{l=1}^{i-1} \bar{a}^0_l}{\prod_{l=1}^i(\bar{a}^0_l + 1)}  + \sum_{i=2}^{N_0-1}\frac{\prod_{l=1}^{N_0-i}\bar{a}_l^0}{\prod_{l=1}^{N_0+1-i}(\bar{a}_l^0 +1)} \right) (\theta^0_{N_0}(t)-\theta^0_1(t)),
\end{align*}
where we exploit the property of well-ordering, i.e.,
\[\theta_1^0 \le \theta_i^0 \le \theta_{N_0}^0, \quad 1 \le i \le N_0.\]
From \eqref{coeffi_0}, it is obvious that the value of coefficients $\bar{a}^0_k$'s is increasing as the subscript is decreasing, in particular,
\[\bar{a}^0_l \ge \bar{a}^0_{N_0 -1} = \eta (N_0+2), \quad 1 \le l \le N_0 -1.\]
Then for $2 \le i \le N_0-1$, we have the following estimates,
\begin{equation*}
\begin{aligned}
&\frac{1}{\bar{a}_1^0 + 1} \le \frac{1}{ \eta (N_0+2)+ 1}, \quad \frac{\prod_{l=1}^{i-1} \bar{a}^0_l}{\prod_{l=1}^i(\bar{a}^0_l + 1)} \le \frac{1}{\bar{a}_i^0 + 1} \le \frac{1}{ \eta (N_0+2)+ 1}, \\
&\frac{\prod_{l=1}^{N_0-i}\bar{a}_l^0}{\prod_{l=1}^{N_0+1-i}(\bar{a}_l^0 +1)} \le \frac{1}{\bar{a}_{N_0+1-i}^0 + 1} \le \frac{1}{ \eta (N_0+2)+ 1}.
\end{aligned}
\end{equation*}
Therefore we immediately obtain that
\begin{equation*}
Q^0(t) \ge \theta_{N_0}^0(t) - \theta_1^0(t) - \frac{2(N_0-1)}{\eta (N_0+2)+ 1}(\theta_{N_0}^0(t) - \theta_1^0(t)) = D_0 (\theta(t)) - \frac{2(N_0-1)}{\eta (N_0+2)+ 1}D_0 (\theta(t)).
\end{equation*}
Since $\frac{2(N_0-1)}{\eta (N_0+2)+ 1} \le \frac{2N_0}{\eta N_0} = \frac{2}{\eta}$, it can be obtained that
\[Q^0(t) \ge (1-\frac{2}{\eta} )D_0 (\theta(t)),\]
thus we derive the desired result.
\end{proof}

In the following, we exploit Algorithm $\mathcal{A}$ and Lemma \ref{eta_sin_inequality} to estimate the dynamics of the constructed quantity $Q^0$, i.e., the relative distance between $\bar{\mathcal{L}}_1^{N_0}(\bar{C}_{1,N_0})$ and $\underline{\mathcal{L}}_1^{N_0}(\underline{C}_{1,N_0})$, 
which will be presented in the lemma below.

\begin{lemma}\label{dynamics_Q0}
Let $\theta_i = \{\theta^0_i\}$ be the solution to system \eqref{KuM} with strong connected digraph $\mathcal{G}_0$. Moreover, for a given sufficiently small $D^\infty < \min\left\{\frac{\pi}{2},\alpha\right\}$, assume the following conditions hold,
\begin{equation}\label{condition_3}
 D_0(\theta(0)) < \alpha < \gamma < \pi, \quad \eta > \max \left\{\frac{1}{\sin \gamma}, \frac{2}{1 - \frac{\alpha}{\gamma}} \right\}, \quad \kappa > \left( 1 + \frac{\alpha}{\alpha - D(\theta(0))}\right) \frac{ \tilde{c}}{\beta D^\infty},
 \end{equation}
 where $\alpha, \gamma$ are constants and
\[\tilde{c} = \frac{D(\Omega) (\sum_{j=1}^{N_0-1}\eta^j A(2N_0,j) + 1)\gamma}{\sin \gamma}.\]
Then, the dynamics of $Q^0(t)$ is governed by the following equation
\begin{equation*}
\dot{Q}^0(t) \le D(\Omega) - \frac{\kappa}{\sum_{j=1}^{N_0-1} (\eta^jA(2N_0,j)) +1}\frac{\sin \gamma}{\gamma} Q^0(t), \quad t \in [0,+\infty),
\end{equation*}
and the phase diameter of the graph $\mathcal{G}_0$ is uniformly bounded by $\gamma$:
\[D_0(\theta(t)) < \gamma, \quad t \in [0,+\infty).\]
\end{lemma}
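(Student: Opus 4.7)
The plan is to establish the bound on $D_0(\theta(t))$ by a continuity/bootstrap argument, and to extract the differential inequality for $Q^0$ by carefully differentiating the nested convex combinations built in Algorithm $\mathcal{A}$ and then applying Lemma \ref{eta_sin_inequality} to convert the resulting sum of sine interactions into a single dissipative term.

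First I would set $T^{\star} = \sup\{\,T > 0 : D_0(\theta(t)) < \gamma \text{ on } [0,T)\,\}$. By Lemma \ref{diameter_alpha} we have $D_0(\theta(t)) < \alpha < \gamma$ on $[0,\bar{t})$, so $T^{\star} \ge \bar{t} > 0$. On $[0,T^{\star})$ the ordering \eqref{well_order} can be made (possibly re-labelled piecewise in $t$), so the convex combinations $\bar{\theta}_0(t)$ and $\underline{\theta}_0(t)$ are well defined and $Q^0(t)$ is Lipschitz. I would then compute $\dot{Q}^0$ directly from $Q^0 = \bar{\theta}_0 - \underline{\theta}_0 = \sum_i \bar{c}_i \theta_i^0 - \sum_i \underline{c}_i \theta_i^0$ using \eqref{KuM}. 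The natural-frequency contribution telescopes (since the $\bar{c}_i$ and $\underline{c}_i$ each sum to $1$) to at most $D(\Omega)$.

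The heart of the argument is the coupling term. Here I would exploit the recursive construction \eqref{coeffi_0}. Writing $\dot{\bar{\theta}}_0$ via the recursion $(\bar{a}_{k-1}^0+1)\bar{\theta}_{k-1} = \bar{a}_{k-1}^0 \bar{\theta}_k + \theta_{k-1}^0$ and unwinding from $k=1$ to $k=N_0$, every $\dot{\theta}_i^0$ carries a coefficient involving products of $\bar{a}_l^0$. Because the choice $\bar{a}_{k-1}^0 = \eta(2N_0-k+2)(\bar{a}_k^0+1)$ is tailored precisely so that the surviving weight in front of the $i$th interaction block scales like $\eta^{i-1}$ (up to factors absorbed into $\sum_{j=1}^{N_0-1}\eta^j A(2N_0,j)+1$), the total coupling part can be bounded from above, after selecting from each $\mathcal{N}_i^0(0)$ the index $j \le i$ that minimises $\sin(\theta_j^0 - \theta_i^0)$, by exactly the kind of weighted sum appearing on the left side of Lemma \ref{eta_sin_inequality}. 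Applying that lemma collapses the entire sum to $\sin(\theta_{\bar{k}_1}^0 - \theta_{N_0}^0) \le 0$; by an analogous (symmetric) argument on $\underline{\theta}_0$, using process $\mathcal{A}_2$ and the second inequality of Lemma \ref{eta_sin_inequality}, I would control $-\dot{\underline{\theta}}_0$ by a term dominated by $\sin(\theta_{N_0}^0 - \theta_1^0)$. Combining, and using the elementary inequality $\sin x \ge \frac{\sin\gamma}{\gamma}\,x$ for $x \in [0,\gamma]$ (valid since $D_0(\theta) < \gamma$), yields
\[
\dot{Q}^0(t) \le D(\Omega) - \frac{\kappa}{\sum_{j=1}^{N_0-1}\eta^j A(2N_0,j)+1}\,\frac{\sin\gamma}{\gamma}\,D_0(\theta(t)) \le D(\Omega) - \frac{\kappa}{K}\,\frac{\sin\gamma}{\gamma}\,Q^0(t),
\]
where $K = \sum_{j=1}^{N_0-1}\eta^j A(2N_0,j)+1$ and the last step uses $Q^0 \le D_0(\theta)$ from Lemma \ref{beta_QD}.

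Finally, to close the bootstrap, I would integrate the differential inequality. Gronwall gives $Q^0(t) \le \max\{Q^0(0), \tilde{c}/\kappa\}$ for $t \in [0,T^{\star})$, and Lemma \ref{beta_QD} promotes this to $D_0(\theta(t)) \le \beta^{-1}\max\{Q^0(0),\tilde{c}/\kappa\}$. For $t \le \bar{t}$ Lemma \ref{diameter_alpha} already gives $D_0(\theta(t)) < \alpha < \gamma$. For $t \ge \bar{t}$, the lower bound on $\kappa$ in \eqref{condition_3} is designed exactly so that $\beta^{-1}\tilde{c}/\kappa < D^{\infty} < \alpha < \gamma$, which contradicts $D_0(\theta(T^{\star})) = \gamma$ if $T^{\star}$ were finite; hence $T^{\star} = +\infty$ and the differential inequality then holds on $[0,+\infty)$. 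The main obstacle is the bookkeeping in differentiating the iterated convex combination: one must verify that the coefficients produced by unwinding the recursion match the weight $\eta^{i-n}$ in Lemma \ref{eta_sin_inequality} up to the denominator $K$, and that positive sine contributions from $j > i$ are genuinely dominated by the negative one from the chosen $j \le i$ (which is where the inductive selection of $j$'s in the proof of Lemma \ref{eta_sin_inequality} and the concavity of $\sin$ on $(0,\pi)$ are crucial).
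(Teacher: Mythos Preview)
Your overall strategy matches the paper's proof almost exactly: a continuity argument on $T^\star$, an inductive estimate on $\dot{\bar\theta}^0_n$ (and its mirror for $\underline\theta^0_n$) that produces precisely the weighted sum in Lemma \ref{eta_sin_inequality}, collapse via that lemma to $\sin(\theta^0_1-\theta^0_{N_0})$, then $\sin x \ge \tfrac{\sin\gamma}{\gamma}x$ and $Q^0 \le D_0(\theta)$ to get the differential inequality, and finally a Gronwall/barrier argument to close the bootstrap. You also correctly identify the delicate point: when passing from $\dot{\bar\theta}^0_n$ to $\dot{\bar\theta}^0_{n-1}$ one picks up positive contributions $\sin(\theta^0_j-\theta^0_{n-1})$ with $j>n-1$, and these must be absorbed by the accumulated negative term. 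The paper handles this (its term $\mathcal{I}_4$) via Lemma \ref{eta_sin_inequality} together with $\eta>1/\sin\gamma$, exactly the mechanism you allude to in your final paragraph.

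There is one small gap in your bootstrap closure. From Gronwall you get $Q^0(t)\le \max\{Q^0(0),\tilde c/\kappa\}$ and hence $D_0(\theta(t))\le \beta^{-1}\max\{Q^0(0),\tilde c/\kappa\}$. Your time-splitting argument ($t\le\bar t$ versus $t\ge\bar t$) deals with the $\tilde c/\kappa$ branch via the $\kappa$ hypothesis, but does nothing for the $Q^0(0)$ branch: for large $t$ the maximum may still equal $Q^0(0)$, and Lemma \ref{diameter_alpha} no longer applies. The paper instead uses $Q^0(0)\le D_0(\theta(0))<\alpha$ together with the second assumption on $\eta$, namely $\eta>2/(1-\alpha/\gamma)$, which gives $\beta=1-2/\eta>\alpha/\gamma$ and hence $\beta^{-1}Q^0(0)<\alpha/\beta<\gamma$. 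Once you invoke this, the contradiction with $D_0(\theta(T^\star))=\gamma$ follows uniformly in $t$, with no need to split at $\bar t$.
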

\begin{proof}
As the proof is rather lengthy, we put it in Appendix \ref{appendix1}.
\end{proof}

Lemma \ref{dynamics_Q0} states that the phase diameter of the digraph $\mathcal{G}_0$ is uniformly bounded and can be confined in half circle. We next show that there exists some time $t_0$ after which the phase diameter of the digraph $\mathcal{G}_0$ enters into a small region. 

\begin{lemma}\label{diameter_0_small}
Let $\theta_i = \{\theta^0_i\}$ be a solution to system \eqref{KuM}, and supose the assumptions of Lemma \ref{dynamics_Q0} hold. Then there exists time $t_0$ such that
\begin{equation*}
D_0(\theta(t)) \le D^\infty, \quad \mbox{for} \ t \in [t_0, +\infty),
\end{equation*}
where $t_0$ can be estimated as below and bounded by $\bar{t}$ given in Lemma \ref{diameter_alpha}
\begin{equation}\label{D-1}
t_0 < \frac{\alpha}{\kappa \frac{\sin\gamma}{(\sum_{j=1}^{N_0-1} \eta^j A(2N_0,j) + 1)\gamma} \beta D^\infty - D(\Omega)} < \bar{t}.
\end{equation}
\end{lemma}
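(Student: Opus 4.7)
My plan is to combine the linear differential inequality for $Q^0$ supplied by Lemma \ref{dynamics_Q0} with the sandwich estimate $\beta D_0(\theta(t)) \le Q^0(t) \le D_0(\theta(t))$ from Lemma \ref{beta_QD}. Set $K := \frac{\kappa \sin\gamma}{(\sum_{j=1}^{N_0-1} \eta^j A(2N_0,j)+1)\gamma}$, so that Lemma \ref{dynamics_Q0} reads $\dot{Q}^0(t) \le D(\Omega) - K Q^0(t)$ for all $t \ge 0$. It then suffices to prove $Q^0(t) \le \beta D^\infty$ on $[t_0, +\infty)$, because the lower sandwich inequality immediately converts this into $D_0(\theta(t)) \le D^\infty$.

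The crucial quantitative input I would first extract from the coupling assumption \eqref{condition_3} is the strict decay gap
\[
K\beta D^\infty - D(\Omega) > \frac{\alpha}{\alpha - D(\theta(0))}\, D(\Omega) > 0,
\]
obtained by substituting $\tilde c = D(\Omega)(\sum_{j=1}^{N_0-1}\eta^j A(2N_0,j)+1)\gamma/\sin\gamma$ into \eqref{condition_3} and multiplying through by $\sin\gamma/[(\sum_{j=1}^{N_0-1}\eta^j A(2N_0,j)+1)\gamma]$. Given this gap, the main step is a standard barrier argument: while $Q^0(t) > \beta D^\infty$, the differential inequality yields $\dot{Q}^0(t) \le D(\Omega) - K\beta D^\infty < 0$, so $Q^0$ strictly decreases at linear rate at least $K\beta D^\infty - D(\Omega)$. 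Since $Q^0(0) \le D_0(\theta(0)) < \alpha$ by Lemma \ref{beta_QD}, integration forces $Q^0$ to reach the level $\beta D^\infty$ at some first time $t_0 \le \frac{\alpha - \beta D^\infty}{K\beta D^\infty - D(\Omega)} < \frac{\alpha}{K\beta D^\infty - D(\Omega)}$, which is precisely the bound in \eqref{D-1}. For $t \ge t_0$ the same inequality prevents $Q^0$ from re-crossing $\beta D^\infty$ upward: at any hypothetical crossing time $t_*$, the upper Dini derivative of $Q^0$ would be nonnegative, while the differential inequality supplies the strictly negative upper bound $D(\Omega) - K\beta D^\infty$. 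Hence $Q^0(t) \le \beta D^\infty$ on $[t_0, +\infty)$, which closes the first assertion.

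Finally, the second inequality $t_0 < \bar{t}$ falls out directly from the gap estimate: combining $\bar{t} = (\alpha - D(\theta(0)))/D(\Omega)$ from Lemma \ref{diameter_alpha} with the displayed lower bound on $K\beta D^\infty - D(\Omega)$ gives
\[
\frac{\alpha}{K\beta D^\infty - D(\Omega)} < \frac{\alpha(\alpha - D(\theta(0)))}{\alpha\, D(\Omega)} = \bar{t}.
\]
The only subtlety I anticipate is the handling of Lipschitz (rather than $C^1$) regularity of $Q^0$ in the invariance step, but since $Q^0$ is Lipschitz continuous (as noted after \eqref{abbreviation_0}), its Dini derivatives inherit the same inequality and the barrier argument goes through unchanged. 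No other substantive obstacle arises, as everything reduces to ODE comparison and the explicit reformulation of \eqref{condition_3}.
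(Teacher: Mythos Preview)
Your proposal is correct and follows essentially the same approach as the paper: both start from the differential inequality $\dot Q^0 \le D(\Omega) - K Q^0$ of Lemma \ref{dynamics_Q0}, use the uniform decay rate $K\beta D^\infty - D(\Omega) > 0$ (extracted from \eqref{condition_3}) to drive $Q^0$ down to the level $\beta D^\infty$ within the stated time, invoke a barrier/invariance argument to keep $Q^0 \le \beta D^\infty$ thereafter, and then convert to $D_0(\theta)\le D^\infty$ via Lemma \ref{beta_QD}. The paper merely splits explicitly into the two cases $Q^0(0)>\beta D^\infty$ and $Q^0(0)\le\beta D^\infty$, whereas you treat them uniformly; this is a cosmetic difference, not a substantive one.
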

\begin{proof}
In Lemma \ref{dynamics_Q0}, we have obtained that the dynamics of quantity $Q^0(t)$ is governed by the following equation
\begin{equation}\label{D-a1}
\dot{Q}^0(t) \le D(\Omega) - \kappa \frac{1}{ \sum_{j=1}^{N_0-1} (\eta^j A(2N_0,j)) + 1} \frac{\sin \gamma}{\gamma} Q^0(t), \quad t \in [0,+\infty).
\end{equation}
In the subsequence, we will find some time $t_0$ after which the quantity $Q^0$ in \eqref{D-a1} is uniformly bounded. We consider two cases separately.\newline

\noindent$\diamond$ \textbf{Case 1.} We first consider the case that $Q^0(0) > \beta D^\infty$. When $Q^0(t) \in [\beta D^\infty,Q^0(0)]$, according to \eqref{C-13}, we have
\begin{equation}
\begin{aligned}\label{D-2}
\dot{Q}^0(t) &\le D(\Omega) - \kappa \frac{\sin \gamma}{ (\sum_{j=1}^{N_0-1} (\eta^j A(2N_0,j)) + 1)\gamma} Q^0(t) \\
&\le D(\Omega) - \kappa \frac{\sin \gamma}{ (\sum_{j=1}^{N_0-1} (\eta^j A(2N_0,j)) + 1)\gamma} \beta D^\infty < 0.
\end{aligned}
\end{equation}
This means that when $Q^0(t)$ is located in the interval $[\beta D^\infty,Q^0(0)]$, $Q^0(t)$ will keep decreasing with a uniform rate. Therefore we can define a stopping time $t_0$ as follows,
\begin{equation*}
t_0 = \inf \{t \ge 0 \ | \ Q^0(t) \le \beta D^\infty\}.
\end{equation*}
Then, according to the definition of $t_0$, we know that $Q^0$ will decrease before $t_0$ and has the following property at $t_0$,
\begin{equation}\label{D-c2}
Q^0(t_0) = \beta D^\infty.
\end{equation}
Moerover, according to \eqref{D-2}, it is obvious that the stopping time $t_0$ satisfies the following upper bound estimate,
\begin{equation}\label{D-b2}
t_0 \le \frac{Q^0(0) - \beta D^\infty }{\kappa \frac{\sin\gamma}{(\sum_{j=1}^{N_0-1} \eta^j A(2N_0,j) + 1)\gamma} \beta D^\infty - D(\Omega)}.
\end{equation}
Now we study the upper bound of $Q^0$ on $[t_0, +\infty)$. Coming back to \eqref{D-2}, we can apply \eqref{D-c2} and the same argument in \eqref{C-14} to derive
\begin{equation}\label{D-d2}
Q^0(t) \le \beta D^\infty, \ t \in [t_0,+\infty).
\end{equation}

\noindent $\diamond$ \textbf{Case 2.} For another case that $Q^0(0) \le \beta D^\infty$. Applying the same analysis in \eqref{C-14}, we get
\begin{equation}\label{D-a3}
Q^0(t) \le \beta D^\infty, \quad t \in [0,+\infty).
\end{equation}
This allows us to directly set $t_0=0$. \newline

Thus we apply \eqref{D-d2}, \eqref{D-a3}, and Lemma \ref{beta_QD} to estimate the upper bound of $D_0(\theta)$ on $[t_0, \infty)$ as below
 \begin{equation}\label{D-3}
D_0(\theta(t)) \le \frac{Q^0(t)}{\beta} \le D^\infty, \quad \mbox{for} \ t \in [t_0, +\infty).
\end{equation}

On the other hand, in order to verify \eqref{D-1}, we do further estimates on $t_0$ in \eqref{D-3}. It is known from \eqref{D-b2} in Case 1 and $t_0 = 0$ in Case 2 that
\begin{equation}\label{D-4}
t_0 < \frac{\alpha}{\kappa \frac{\sin\gamma}{(\sum_{j=1}^{N_0-1} \eta^j A(2N_0,j) + 1)\gamma} \beta D^\infty - D(\Omega)}.
\end{equation}
Here, we use the truth that $Q^0(0) < \alpha$. Thus, from the assumption of $\kappa$ in \eqref{condition_3}, i.e.,
\[\kappa > \left( 1 + \frac{\alpha}{\alpha - D(\theta(0))}\right) \frac{\tilde{c}}{\beta D^\infty}, \quad \tilde{c} = \frac{D(\Omega) (\sum_{j=1}^{N_0-1}\eta^j A(2N_0,j) + 1)\gamma}{\sin \gamma}.\]
it yields that the time $t_0$ has the following estimate,
\begin{equation}\label{D-5}
t_0 < \frac{\alpha}{(1 + \frac{\alpha}{\alpha - D(\theta(0))})D(\Omega) - D(\Omega)} = \frac{\alpha - D(\theta(0))}{D(\Omega)} = \bar{t}.
\end{equation}
Thus, we derive the desired results \eqref{D-3}, \eqref{D-4} and \eqref{D-5}.
\end{proof}

\section{General network}\label{sec:4}
\setcounter{equation}{0}
Now, we focus on the general network, and provide a proof of Theorem \ref{enter_small} for the emergence of complete synchronization in Kuramoto model with general network containing a spanning tree. According to Definition \ref{node} and Lemma \ref{one_maximum}, the digraph $\mathcal{G}$ associated to system \eqref{KuM} has a unique maximum node if it contains a spanning tree structure. From Remark \ref{w.l.o.g.}, without loss of generality, $\mathcal{G}$ can be decomposed into a union as $\mathcal{G} = \bigcup_{i=0}^d \mathcal{G}_i$, where $\mathcal{G}_p$ is a maximum node of $\mathcal{G}\setminus (\bigcup_{i=0}^{p-1} \mathcal{G}_i)$.

In Section \ref{sec:3}, for the situation that $d=0$, we showed that the phase diameter of the digraph $\mathcal{G}_0$ is uniformly bounded and can be confined in a quarter circle after some finite time. However, for the case that $d > 0$, $\mathcal{G}_k$'s are not maximum nodes in $\mathcal{G}$ for $k \ge 1$. Hence, we can not directly apply the same method in Lemma \ref{dynamics_Q0} and Lemma \ref{diameter_0_small} for the situation $d=0$. More precisely, the oscillators in $\mathcal{G}_i$ with $i < k$ perform as an attraction source and influence the agents in $\mathcal{G}_k$. Thus we can not ignore the information from $\mathcal{G}_i$ with $i < k$ when we study the behavior of agents in $\mathcal{G}_k$.

From Remark \ref{w.l.o.g.} and node decomposition, the graph $\mathcal{G}$ can be represented as 
\begin{equation*}
\mathcal{G} = \bigcup_{k=0}^d \mathcal{G}_k, \quad |\mathcal{G}_k| = N_k,
\end{equation*}
and we denote the oscillators in $\mathcal{G}_k$ by $\theta^k_i$ with $1 \le i \le N_k$. Then we assume that at time $t$, the oscillators in each $\mathcal{G}_k$ are well-ordered as below:
\begin{equation}\label{well_order_k}
\theta^k_1(t) \le \theta^k_2(t) \le \ldots \le \theta^k_{N_k}(t),  \quad 0 \le k \le d.
\end{equation}
For each subdigraph $\mathcal{G}_k$ with $k \ge 0$ which is strongly connected, we follow the process in  Algorithm $\mathcal{A}_1$ and $\mathcal{A}_2$ to construct $\bar{\mathcal{L}}_{l-1}^{N_k}(\bar{C}_{l-1,N_k})$ and $\underline{\mathcal{L}}_1^{l+1}(\underline{C}_{1,l+1})$ by redesigning the coefficients $\bar{a}^k_l$ and $\underline{a}_l^k$ of convex combination as below:

\begin{equation}\label{coeffi_k}
\begin{cases}
\displaystyle \bar{\mathcal{L}}_{l-1}^{N_k}(\bar{C}_{l-1,N_k}) \ \mbox{with} \ \bar{a}^k_{N_k} = 0, \ \bar{a}^k_{l-1} =\eta (2N -l +2)(\bar{a}^k_l + 1), \quad 2 \le l \le N_k, \\
\displaystyle \underline{\mathcal{L}}_1^{l+1}(\underline{C}_{1,l+1}) \ \mbox{with} \ \underline{a}^k_1 = 0, \ \underline{a}^k_{l+1} = \eta(l+1+2N-N_k)(\underline{a}^k_l + 1), \quad 1\le l \le N_k-1,
\end{cases}
\end{equation}
By induction principle, we deduce that
\begin{equation}\label{permutation_k}
\begin{cases}
\displaystyle \bar{a}^k_{l-1} = \sum_{j=1}^{N_k - l+1} \eta^j A(2N-l+2,j), \quad 2 \le l \le N_k, \\
\displaystyle\underline{a}^k_{l+1} = \sum_{j=1}^l \eta^j A(l+1+ 2N-N_k,j), \quad 1 \le l \le N_k-1.
\end{cases}
\end{equation}
Note that $\bar{a}^k_{N_k+1-i} = \underline{a}^k_i, \ i=1,2\ldots,N_k$. By simple calculation, we have
\begin{equation}\label{a^k_1-size}
\bar{a}^k_1 = \sum^{N_k -1}_{j=1} (\eta^j A(2N,j)),  \quad \bar{a}^k_1 \le \sum^{N -1}_{j=1} (\eta^j A(2N,j)), \quad 0 \le k\le d.
\end{equation}
And we set the following notations, 
\begin{align}
&\bar{\theta}^k_l := \bar{\mathcal{L}}_l^{N_k}(\bar{C}_{l,N_k}),\quad \underline{\theta}^k_l := \underline{\mathcal{L}}_1^l(\underline{C}_{1,l}), \quad 1 \le l \le N_k, \quad 0 \le k \le d,\label{abbreviation_k}\\
&\bar{\theta}_k := \bar{\mathcal{L}}_1^{N_k}(\bar{C}_{1,N_k}), \quad \underline{\theta}_k := \underline{\mathcal{L}}_1^{N_k}(\underline{C}_{1,N_k}), \quad 0\le k \le d,\label{bar_underline_k}\\
&Q^k(t) := \max_{0 \le i\le k}\{\bar{\theta}_i\} - \min_{0\le i\le k}\{\underline{\theta}_i\},  \quad 0 \le k \le d.\label{definition_Qk}
\end{align}
Due to the analyticity of the solution, $Q^k(t)$ is Lipschitz continuous. Similar as in Section \ref{sec:3}, in the following, we will first establish the comparison relation between the quantity $Q^k(t)$ and phase diameter $D_k(\theta(t))$ of the first $k+1$ nodes, which plays an important role in the later analysis.

\begin{lemma}\label{beta_QD_k}
Let $\theta_i$ be a solution to system \eqref{KuM} and assume that for each subdigraph $\mathcal{G}_k$, the coefficients $\bar{a}^k_l$ and $\underline{a}^k_l$ of convex combination in Algorithm $\mathcal{A}$ satisfy the scheme \eqref{coeffi_k}. Then at each time $t$, we have the following relation
\begin{equation*}
\beta D_k(\theta(t)) \le Q^k(t) \le D_k(\theta(t)),  \quad 0\le k \le d, \quad\beta = 1 - \frac{2}{\eta},
\end{equation*}
where $D_k(\theta) = \max\limits_{0 \le i \le k} \max\limits_{1 \le j \le N_i} \{\theta_j^i\} - \min\limits_{0 \le i \le k} \min\limits_{1 \le j \le N_i} \{\theta_j^i\}$ and $\eta$ satisfies the condition \eqref{condition_1}. 
\end{lemma}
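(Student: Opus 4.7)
The plan is to lift the argument of Lemma \ref{beta_QD} to the multi-node setting by a reduction to a single ``extreme'' subdigraph. Denote $\theta^\ast := \max_{0 \le i \le k}\max_j \theta^i_j$ and $\theta_\ast := \min_{0 \le i \le k}\min_j \theta^i_j$, so that $D_k(\theta(t)) = \theta^\ast - \theta_\ast$. The upper bound is immediate: for each $i \in \{0,\dots,k\}$, both $\bar{\theta}_i$ and $\underline{\theta}_i$ are convex combinations of the phases in $\mathcal{G}_i$, hence lie in $[\theta_\ast, \theta^\ast]$. Therefore $Q^k(t) = \max_i \bar{\theta}_i - \min_j \underline{\theta}_j \le \theta^\ast - \theta_\ast = D_k(\theta(t))$.

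For the lower bound, I would first locate indices $k_1,k_2 \in \{0,\dots,k\}$ for which, by the well-ordering \eqref{well_order_k}, $\theta^\ast = \theta^{k_1}_{N_{k_1}}$ and $\theta_\ast = \theta^{k_2}_1$. Dropping to specific terms in the max/min defining $Q^k$ yields $Q^k(t) \ge \bar{\theta}_{k_1}(t) - \underline{\theta}_{k_2}(t)$, so the task reduces to showing that $\bar{\theta}_{k_1}$ stays close to $\theta^{k_1}_{N_{k_1}}$ and $\underline{\theta}_{k_2}$ stays close to $\theta^{k_2}_1$, with total error controlled by $\tfrac{2}{\eta} D_k(\theta)$.

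I would then unfold the recursions in Algorithm $\mathcal{A}_1$ and $\mathcal{A}_2$ to obtain the explicit expansions
\[\bar{\theta}_{k_1} - \theta^{k_1}_{N_{k_1}} = \sum_{i=1}^{N_{k_1}-1}\lambda^{k_1}_i\bigl(\theta^{k_1}_i - \theta^{k_1}_{N_{k_1}}\bigr), \qquad \theta^{k_2}_1 - \underline{\theta}_{k_2} = \sum_{i=2}^{N_{k_2}}\mu^{k_2}_i\bigl(\theta^{k_2}_1 - \theta^{k_2}_i\bigr),\]
where $\lambda^{k_1}_i = \prod_{l=1}^{i-1}\bar{a}^{k_1}_l /\prod_{l=1}^{i}(\bar{a}^{k_1}_l+1)$ (with empty product $= 1$) and $\mu^{k_2}_i$ is defined analogously from the $\underline{a}^{k_2}_l$; these identities reuse the rearrangement already performed in \eqref{QD_01}. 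Since $\theta^{k_1}_i,\theta^{k_2}_i \in [\theta_\ast,\theta^\ast]$, each summand on the right is bounded in absolute value by $D_k(\theta)$, which gives $Q^k(t) \ge D_k(\theta(t))\bigl(1 - \sum_i\lambda^{k_1}_i - \sum_i\mu^{k_2}_i\bigr)$.

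The final step is to show each of $\sum_i\lambda^{k_1}_i$ and $\sum_i\mu^{k_2}_i$ is at most $1/\eta$, which is essentially the computation already done in Lemma \ref{beta_QD}. From the recursion \eqref{coeffi_k}, $\bar{a}^{k_1}_l$ is increasing as $l$ decreases on $\{1,\dots,N_{k_1}-1\}$, so its minimum value there is $\bar{a}^{k_1}_{N_{k_1}-1} = \eta(2N-N_{k_1}+2)$; using $\lambda^{k_1}_i \le 1/(\bar{a}^{k_1}_i+1)$ and the elementary inequality $N_{k_1}-1 \le 2N-N_{k_1}+2$ (which holds since $N_{k_1} \le N$) one obtains $\sum_i \lambda^{k_1}_i \le (N_{k_1}-1)/[\eta(2N-N_{k_1}+2)+1] \le 1/\eta$, and a symmetric estimate applies to $\sum_i\mu^{k_2}_i$. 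Combining yields $Q^k(t) \ge (1-2/\eta)D_k(\theta(t)) = \beta D_k(\theta(t))$. I do not foresee a substantive obstacle here: the proof is a direct lifting of Lemma \ref{beta_QD} once the reduction $Q^k \ge \bar{\theta}_{k_1} - \underline{\theta}_{k_2}$ is in place, and the only notational subtlety is that the coefficient bounds now involve the global $N$ rather than $N_{k_1}$, which if anything tightens the estimate.
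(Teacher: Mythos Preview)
Your proposal is correct and follows essentially the same approach as the paper's proof: reduce to $Q^k \ge \bar{\theta}_{k_1} - \underline{\theta}_{k_2}$ for the extremal subdigraphs, expand both convex combinations as in \eqref{QD_01}, and bound the coefficient sums using the monotonicity of $\bar{a}^{k}_l$ together with $\bar{a}^{k}_{N_k-1} = \eta(2N-N_k+2) \ge \eta(N+2)$. The only cosmetic difference is that the paper splits into the cases $k_1 = k_2$ and $k_1 \ne k_2$, whereas your unified treatment (bounding each of the two coefficient sums by $1/\eta$ separately) handles both at once.
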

\begin{proof}
Without loss of generality, assume that at time $t$, the oscillators in each subdigraph $\mathcal{G}_k$ are all well-ordered as below
\begin{equation}\label{E-1}
\theta^k_1 \le \theta^k_2 \le \ldots \le \theta^k_{N_k},  \quad 0 \le k \le d.
\end{equation}
From the definition of the quantity $Q^k(t)$ in \eqref{definition_Qk} and the convex combination structure of $\bar{\theta}_k$ and $\underline{\theta}_k$ in \eqref{bar_underline_k}, it can be directly derived that
\begin{equation}\label{E-2}
\bar{\theta}_k \le \theta^k_{N_k}, \quad \underline{\theta}_k \ge \theta^k_1, \quad Q^k(t)=\max_{0 \le i\le k}\{\bar{\theta}_i\} - \min_{0\le i\le k}\{\underline{\theta}_i\} \le \max_{0 \le i \le k}\{\theta^i_{N_i}\} - \min_{0 \le i \le k}\{\theta^i_{1}\}.
\end{equation}
This means that
\begin{equation*}
Q^k(t) \le D_k(\theta(t)) =  \max_{0 \le i \le k} \max\limits_{1 \le j \le N_i} \{\theta_j^i\} - \min_{0 \le i \le k} \min\limits_{1 \le j \le N_i} \{\theta_j^i\}.
\end{equation*}
Next we will prove the left part of this Lemma. In fact, we denote the extreme phases of the first $k+1$ nodes by
\begin{equation}\label{E-3}
\theta^p_{N_p} := \max_{0 \le i \le k}\max_{1 \le j\le N_i} \{\theta^i_j\}, \quad \theta^q_1 := \min_{0\le i\le k}\min_{1 \le j \le N_i} \{\theta^i_j\}, \quad 0\le p \le k, \ 0\le q \le k.
\end{equation}
It is clear that $D_k(\theta(t)) = \theta^p_{N_p} - \theta^q_1$.
We consider two cases separately.\newline

\noindent$\bullet$ \textbf{Case 1.} If the index satisfy the relation $p = q$, we have
\begin{equation*}
Q^k(t) = \max_{0 \le i\le k}\{\bar{\theta}_i\} - \min_{0\le i\le k}\{\underline{\theta}_i\} \ge \bar{\theta}_p  - \underline{\theta}_p = \theta^p_{N_p} - \theta^p_1 + \bar{\theta}_p - \theta^p_{N_p}  + \theta^p_1 - \underline{\theta}_p.
\end{equation*}
In this case, applying the same arguments in Lemma \ref{beta_QD}, we obtain that
\begin{equation*}
\begin{aligned}
Q^k(t) &\ge \theta^p_{N_p}(t) - \theta^p_1(t) - \frac{2(N_p-1)}{\eta (N+2)+ 1}(\theta_{N_p}^p(t) - \theta_1^p(t)) = D_k (\theta(t)) - \frac{2(N_p-1)}{\eta (N+2)+ 1}D_k (\theta(t))\\
&\ge (1-\frac{2}{\eta})D_k (\theta(t)) = \beta D_k (\theta(t)).
\end{aligned}
\end{equation*}
Here, in the above estimates, based on the construction of coefficients of convex combination in \eqref{coeffi_k}, we used the inequalities
\begin{equation}\label{E-4}
\bar{a}^k_l \ge \bar{a}^k_{N_k-1} = \eta (2N - N_k + 2) \ge \eta(N +2), \quad 0\le k \le d, \ 1 \le l \le N_k -1,
\end{equation}
and applied the symmetric property
\begin{equation}\label{E-5}
\bar{a}^k_{N_k + 1-l} = \underline{a}^k_l, \quad 1 \le l \le N_k, \ 0\le k \le d.
\end{equation}

\noindent $\bullet$ \textbf{Case 2.} Consider the case that $p\ne q$, then we have
\begin{equation}\label{E-6}
\begin{aligned}
Q^k(t) &=\max_{0 \le i\le k}\{\bar{\theta}_i\} - \min_{0\le i\le k}\{\underline{\theta}_i\}  \ge \bar{\theta}_p - \underline{\theta}_q = \bar{\theta}_p(t) - \theta_{N_p}^p(t) + \theta_{N_p}^p(t) - \theta_1^q(t) + \theta^q_1(t) - \underline{\theta}_q(t) \\
&= \theta_{N_p}^p(t) - \theta_1^q(t) + \bar{\theta}_p(t) - \theta_{N_p}^p(t) + \theta^q_1(t) - \underline{\theta}_q(t) \\
& = \theta_{N_p}^p(t) - \theta_1^q(t) + \left( \frac{\theta_1^p(t)}{\bar{a}_1^p + 1} + \frac{\prod_{l=1}^{N_p-1}\bar{a}_l^p}{\prod_{l=1}^{N_p-1}(\bar{a}^p_l+1)}\theta^p_{N_p}(t) + \sum_{i=2}^{N_p-1} \frac{\prod_{l=1}^{i-1} \bar{a}^p_l}{\prod_{l=1}^i(\bar{a}^p_l + 1)} \theta_i^p(t) - \theta^p_{N_p}(t)\right)\\
&+ \left(\theta^q_1(t) - \frac{\theta^q_{N_q}(t)}{\underline{a}_{N_q}^q+1} - \frac{\prod_{l=2}^{N_q}\underline{a}_l^q}{\prod_{l=2}^{N_q} (\underline{a}_l^q +1)}\theta^q_1(t) - \sum_{i=2}^{N_q-1} \frac{\prod_{l=i+1}^{N_q}\underline{a}_l^q}{\prod_{l=i}^{N_q} (\underline{a}_l^q +1)} \theta^q_i(t)\right)\\
& = \theta_{N_p}^p(t) - \theta_1^q(t) + \frac{1}{\bar{a}_1^p + 1}(\theta^p_1(t) - \theta^p_{N_p}(t)) + \sum_{i=2}^{N_p-1} \frac{\prod_{l=1}^{i-1} \bar{a}^p_l}{\prod_{l=1}^i(\bar{a}^p_l + 1)}( \theta_i^p(t) - \theta^p_{N_p}(t)) \\
&+  \frac{1}{\underline{a}_{N_q}^q+1}(\theta^q_1(t) - \theta^q_{N_q}(t)) + \sum_{i=2}^{N_q-1} \frac{\prod_{l=i+1}^{N_q}\underline{a}_l^q}{\prod_{l=i}^{N_q} (\underline{a}_l^q +1)}(\theta^q_1(t)- \theta^q_i(t)),
\end{aligned}
\end{equation}
where we apply the property that the coefficients sum of convex combination of $\bar{\theta}_k$ and $\underline{\theta}_k$ with $0\le k \le d$ are respectively equal to $1$. Moreover, we know from \eqref{E-3} that
\begin{equation*}
 \theta^p_i \ge \theta^q_1, \ i=1,2,\ldots,N_p -1, \qquad \theta^q_{i} \le \theta^p_{N_p}, \ i = 2,\ldots,N_q-1, N_q.
\end{equation*} 
This implies that
\begin{equation}\label{E-7}
\theta^p_i - \theta^p_{N_p} \ge \theta^q_1 - \theta^p_{N_p}, \ i=1,2,\ldots,N_p -1, \quad \theta^q_1 - \theta^q_{i} \ge \theta^q_1 - \theta^p_{N_p}, \ i = 2,\ldots,N_q-1, N_q.
\end{equation}
Moreover, exlpoiting the symmetric property \eqref{E-5}, we immediately have
\begin{equation}\label{E-8}
\frac{\prod_{l=i+1}^{N_q}\underline{a}_l^q}{\prod_{l=i}^{N_q}(\underline{a}_l^q +1)} = \frac{\prod_{l=1}^{N_q-i}\bar{a}_l^q}{\prod_{l=1}^{N_q+1-i}(\bar{a}_l^q +1)}.
\end{equation}
Therefore, combining \eqref{E-6}, \eqref{E-7} and \eqref{E-8}, we obtain that
\begin{equation*}
\begin{aligned}
Q^k(t) &\ge \theta_{N_p}^p(t) - \theta_1^q(t) + \frac{1}{\bar{a}_1^p + 1}(\theta^q_1(t) - \theta^p_{N_p}(t)) + \sum_{i=2}^{N_p-1} \frac{\prod_{l=1}^{i-1} \bar{a}^p_l}{\prod_{l=1}^i(\bar{a}^p_l + 1)}( \theta_1^q(t) - \theta^p_{N_p}(t)) \\
&+  \frac{1}{\bar{a}_{1}^q+1}(\theta^q_1(t) - \theta^p_{N_p}(t)) + \sum_{i=2}^{N_q-1} \frac{\prod_{l=1}^{N_q-i}\bar{a}_l^q}{\prod_{l=1}^{N_q+1-i}(\bar{a}_l^q +1)}(\theta^q_1(t)- \theta^p_{N_p}(t)) \\
& = \theta_{N_p}^p(t) - \theta_1^q(t) \\
&- \left( \underbrace{\frac{1}{\bar{a}_1^p + 1} + \sum_{i=2}^{N_p-1} \frac{\prod_{l=1}^{i-1} \bar{a}^p_l}{\prod_{l=1}^i(\bar{a}^p_l + 1)} + \frac{1}{\bar{a}_{1}^q+1} + \sum_{i=2}^{N_q-1} \frac{\prod_{l=1}^{N_q-i}\bar{a}_l^q}{\prod_{l=1}^{N_q+1-i}(\bar{a}_l^q +1)}}\right)(\theta_{N_p}^p(t) - \theta_1^q(t)).
\end{aligned}
\end{equation*}
We apply \eqref{E-4} and estimate the items in the above brace respectively,
\begin{equation*}
\begin{aligned}
&\frac{1}{\bar{a}_1^p + 1} \le \frac{1}{ \eta (N+2)+ 1}, \quad \frac{1}{\bar{a}_1^q + 1} \le \frac{1}{ \eta (N+2)+ 1}, \\
&\frac{\prod_{l=1}^{i-1} \bar{a}^p_l}{\prod_{l=1}^i(\bar{a}^p_l + 1)} \le \frac{1}{\bar{a}_i^p + 1} \le \frac{1}{ \eta (N+2)+ 1}, \quad 2 \le i \le N_p-1,\\
&\frac{\prod_{l=1}^{N_q-i}\bar{a}_l^q}{\prod_{l=1}^{N_q+1-i}(\bar{a}_l^q +1)} \le \frac{1}{\bar{a}_{N_q+1-i}^q + 1} \le \frac{1}{ \eta (N+2)+ 1}, \quad 2 \le i \le N_q-1.
\end{aligned}
\end{equation*}
Thus, based on the above estimates, we have
\begin{equation*}
Q^k(t) \ge \theta_{N_p}^p(t) - \theta_1^q(t) - \frac{N_p + N_q -2}{\eta (N+2)+ 1}(\theta_{N_p}^p(t) - \theta_1^q(t)) = D_k (\theta(t)) - \frac{N_p + N_q -2}{\eta (N+2)+ 1}D_k (\theta(t).
\end{equation*}
Since $\frac{N_p + N_q -2}{\eta (N+2)+ 1} \le \frac{2N}{\eta N} = \frac{2}{\eta}$ and from \eqref{E-3}, we immediately have
\[Q^k(t) \ge (1-\frac{2}{\eta} )D_k (\theta(t)) = \beta D_k (\theta(t)).\]
Thus combining the above analysis, we derive the desired result.
\end{proof}

Now, we are ready to prove the main Theorem \ref{enter_small}. We will follow similar arguments as in Section \ref{sec:3} to finish the proof. Actually, we will study the constructed quantity $Q^k(t)$ which contains the information from $\mathcal{G}_i$ with $i <k$, and then yield the hypo-coercivity of the diameter. Following similar arguments in Lemma \ref{dynamics_Q0} and Lemma \ref{diameter_0_small}, we have the following estimates for the first maximal node $\mathcal{G}_0$.

\begin{lemma}\label{L4.2}
Suppose that the network topology contains a spanning tree, and let $\theta_i$ be a solution to \eqref{KuM}. Moreover, assume that the initial data and the quantity $\eta$ satisfy
 \begin{equation}\label{condition_11}
 D(\theta(0)) < \alpha < \gamma < \pi, \quad \eta > \max \left\{\frac{1}{\sin \gamma}, \frac{2}{1 - \frac{\alpha}{\gamma}} \right\},
 \end{equation}
 where $\alpha, \gamma$ are positive constants. For a given sufficiently small $D^\infty < \min\{\frac{\pi}{2},\alpha\}$, if the coupling strength $\kappa$ satisfies
 \begin{equation}\label{condition_21}
 \kappa > \left( 1 + \frac{(d+1)\alpha}{\alpha - D(\theta(0))}\right) \frac{(4c)^d \tilde{c}}{\beta^{d+1}D^\infty},
 \end{equation}
 where $d$ is the number of general nodes and
\[c = \frac{(2N + 1)(\sum_{j=1}^{N-1} \eta^j A(2N,j) + 1)\gamma}{\sin \gamma}, \quad \tilde{c} = \frac{D(\Omega) (\sum_{j=1}^{N-1}\eta^j A(2N,j) + 1)\gamma}{\sin \gamma},\]
 then the following two assertions hold for the maximum node $\mathcal{G}_0$:
 \begin{enumerate}
\item  The dynamics of $Q^0(t)$ is governed by the following equation
\begin{equation*}
\dot{Q}^0(t) \le D(\Omega) - \frac{\kappa}{\sum_{j=1}^{N-1} (\eta^jA(2N,j)) +1}\frac{\sin \gamma}{\gamma} Q^0(t), \quad t \in [0,+\infty),
\end{equation*}
\item  there exists time $t_0$ such that
\begin{equation*}
D_0(\theta(t)) \le \frac{\beta^d D^\infty}{(4c)^d}, \quad \mbox{for} \ t \in [t_0, +\infty),
\end{equation*}
where $t_0$ can be estimated as below and bounded by $\bar{t}$ given in Lemma \ref{diameter_alpha}
\begin{equation*}
t_0 < \frac{\alpha}{\kappa \frac{\sin\gamma}{(\sum_{j=1}^{N-1} \eta^j A(2N,j) + 1)\gamma} \frac{\beta^{d+1}D^\infty}{(4c)^d} - D(\Omega)} < \bar{t}.
\end{equation*}

\end{enumerate}

\end{lemma}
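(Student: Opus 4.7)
The plan is to exploit the fact that $\mathcal{G}_0$, being the unique maximum node of the spanning-tree digraph $\mathcal{G}$, receives no arcs from outside; hence the Kuramoto equations restricted to $\{\theta_i^0\}_{i=1}^{N_0}$ are literally a strongly-connected instance of \eqref{KuM}. I would therefore transcribe the proofs of Lemma \ref{dynamics_Q0} and Lemma \ref{diameter_0_small} for this setting, with two bookkeeping changes: the coefficients \eqref{coeffi_k} at $k=0$ (which replace $N_0$ by $N$ in the recursion) in place of \eqref{coeffi_0}, and the smaller target threshold $\Lambda := \beta^{d+1} D^\infty/(4c)^d$ in place of $\beta D^\infty$.

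For assertion (1), I would start from the comparison bound $\beta D_0(\theta(t)) \le Q^0(t) \le D_0(\theta(t))$, which is the $k=0$ case of Lemma \ref{beta_QD_k} and is already available. Differentiating $Q^0(t) = \bar{\theta}_0(t) - \underline{\theta}_0(t)$ along \eqref{KuM} and telescoping the interaction sums via Algorithm $\mathcal{A}$ together with the $\eta$-weighted sine inequality of Lemma \ref{eta_sin_inequality} reproduces the calculation of Appendix \ref{appendix1} verbatim on $\mathcal{G}_0$; the only change is that the denominator becomes $\sum_{j=1}^{N-1}\eta^j A(2N,j)+1$ because of the coefficient bound \eqref{a^k_1-size}. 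A standard contradiction argument, as in Lemma \ref{dynamics_Q0}, then promotes this to the uniform bound $D_0(\theta(t))<\gamma$ for all $t\ge 0$, so that the $\sin x/x \ge \sin\gamma/\gamma$ estimate used to get the coercive term remains valid throughout.

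For assertion (2), I would follow Lemma \ref{diameter_0_small} with threshold $\Lambda$. A direct computation shows that the coupling condition \eqref{condition_21} is tuned precisely so that
\[
\kappa\cdot\frac{\sin\gamma}{\left(\sum_{j=1}^{N-1}\eta^j A(2N,j)+1\right)\gamma}\cdot\Lambda \;-\; D(\Omega) \;>\; \frac{(d+1)\alpha}{\alpha-D(\theta(0))}\,D(\Omega),
\]
because $\tilde{c}$ carries exactly the factor $D(\Omega)\bigl(\sum\eta^j A(2N,j)+1\bigr)\gamma/\sin\gamma$ needed to cancel everything except $D(\Omega)/(4c)^d$ on the left. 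This makes the right-hand side of the $Q^0$-ODE uniformly negative on $\{Q^0\ge\Lambda\}$. Defining $t_0 := \inf\{t\ge 0 : Q^0(t)\le\Lambda\}$ and using $Q^0(0)\le D(\theta(0))<\alpha$, the usual linear-decay estimate bounds $t_0$ by $\alpha$ divided by the displayed quantity, which is in turn strictly less than $(\alpha-D(\theta(0)))/D(\Omega)=\bar{t}$. Converting $Q^0\le\Lambda$ back to $D_0(\theta)\le\Lambda/\beta=\beta^d D^\infty/(4c)^d$ via Lemma \ref{beta_QD_k} completes the argument.

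The only step that is not a literal transcription of Section \ref{sec:3} is the arithmetic in the displayed inequality above, where one has to check that the inductive factor $(4c)^d/\beta^{d+1}$ together with the extra $(d{+}1)$ multiplier in \eqref{condition_21} — both calibrated for the full induction to follow in Section \ref{sec:4} — are in particular strong enough to drive $Q^0$ below the small threshold $\Lambda$ within the a priori window $[0,\bar{t}]$ provided by Lemma \ref{diameter_alpha}. Once that is verified, the rest follows from the strong connectivity of $\mathcal{G}_0$ and the machinery already developed.
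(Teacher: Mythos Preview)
Your proposal is correct and matches the paper's own approach: the paper explicitly states that the proof is ``almost the same as that in Lemma \ref{dynamics_Q0} and Lemma \ref{diameter_0_small}'' and omits the details. You have correctly identified the two bookkeeping changes (the coefficients \eqref{coeffi_k} at $k=0$ giving the denominator $\sum_{j=1}^{N-1}\eta^j A(2N,j)+1$ via \eqref{a^k_1-size}, and the smaller threshold $\Lambda=\beta^{d+1}D^\infty/(4c)^d$), and your verification that \eqref{condition_21} forces $t_0<\bar t$ is exactly the arithmetic the paper relies on for the induction base case.
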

Since the proof is almost the same as that in Lemma \ref{dynamics_Q0} and Lemma \ref{diameter_0_small}, we omit its details. Inspiring from Lemma \ref{L4.2}, we make the following reasonable ansatz for $Q^k(t)$ for $0 \le k \le d$.\newline 

\noindent \textbf{Ansatz}:  
\begin{enumerate}
\item  The dynamics of $Q^{k}(t)$ is governed by the following differential inequality,
\begin{equation}\label{F-1}
\dot{Q}^{k}(t) \le D(\Omega) - \frac{\kappa}{\sum_{j=1}^{N-1} (\eta^jA(2N,j)) +1}\frac{\sin \gamma}{\gamma} Q^{k}(t) + \kappa(2N+1)D_{k-1}(\theta(t)), \quad t \in [0,+\infty),
\end{equation}
where we assume $D_{-1}(\theta(t)) = 0$.
\item There exists a finite time $t_k$ such that, the phase diameter $D_k(\theta)$ of $\bigcup_{i=0}^k\mathcal{G}_i$ is uniformly bounded after $t_k$, i.e.,
\begin{equation}\label{F-2}
D_k(\theta(t)) \le \frac{\beta^{d-k} D^\infty}{(4c)^{d-k}}, \quad \mbox{for} \ t \in [t_k, +\infty),
\end{equation}
where $t_k$ can be estimated as below
\begin{equation}\label{F-3}
t_k < \frac{(k+1)\alpha}{\kappa \frac{\sin\gamma}{(\sum_{j=1}^{N-1} \eta^j A(2N,j) + 1)\gamma} \frac{\beta^{d+1}D^\infty}{(4c)^d} - D(\Omega)} < \bar{t} = \frac{\alpha - D(\theta(0))}{D(\Omega)}.
\end{equation}
\newline
\end{enumerate}

In the following, we will verify the ansatz respectively in two lemmas by induction criteria. More precisely, suppose the ansatz holds for $ Q^k$ and $D_k(\theta)$ with $0 \le k \le d-1$, we will prove that the ansatz also holds for $Q^{k+1}$ and $D_{k+1}(\theta)$.

\begin{lemma}\label{L4.3}
Suppose the conditions in Lemma \ref{L4.2} are fulfilled, and the ansatz in \eqref{F-1}, \eqref{F-2} and \eqref{F-3} holds for some $k$ with $0 \le k \le d-1$. Then the ansatz  \eqref{F-1} holds for $k+1$.
\end{lemma}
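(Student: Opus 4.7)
The plan is to imitate the proof of Lemma \ref{dynamics_Q0} with $\mathcal{G}_{k+1}$ in place of $\mathcal{G}_0$, while treating the arcs that enter $\mathcal{G}_{k+1}$ from $\bigcup_{i=0}^{k}\mathcal{G}_i$ as an external forcing that will contribute the extra term $\kappa(2N+1)D_k(\theta(t))$ in \eqref{F-1}. Fix a time $t$ at which $Q^{k+1}$ is differentiable (Rademacher, since $Q^{k+1}$ is Lipschitz), and reorder the oscillators inside each $\mathcal{G}_i$ as in \eqref{well_order_k}. Let $p^\ast,q^\ast\in\{0,1,\ldots,k+1\}$ be indices that realize $\max_i\bar{\theta}_i$ and $\min_i\underline{\theta}_i$ in \eqref{definition_Qk}, so that
$$
\dot{Q}^{k+1}(t)=\dot{\bar{\theta}}_{p^\ast}(t)-\dot{\underline{\theta}}_{q^\ast}(t).
$$

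If $p^\ast\le k$ and $q^\ast\le k$, then $Q^{k+1}(t)=Q^k(t)$ at this instant, and the inductive hypothesis \eqref{F-1} for stage $k$ immediately gives the required bound after enlarging $D_{k-1}(\theta)$ to $D_k(\theta)$. In every remaining subcase at least one of $p^\ast,q^\ast$ equals $k+1$; by the symmetry between the $\mathcal{A}_1$ and $\mathcal{A}_2$ constructions of $\bar{\theta}_{k+1}$ and $\underline{\theta}_{k+1}$ it is enough to estimate $\dot{\bar{\theta}}_{k+1}$ when $p^\ast=k+1$, the estimate of $-\dot{\underline{\theta}}_{k+1}$ being obtained in the mirror-symmetric way, and the estimate of a $\mathcal{G}_{\le k}$-extremum being folded into the inductive bound. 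Expanding the convex combination built by Algorithm $\mathcal{A}$ with coefficient scheme \eqref{coeffi_k}--\eqref{permutation_k} and substituting \eqref{KuM} gives
$$
\dot{\bar{\theta}}_{k+1}=\sum_{l=1}^{N_{k+1}}\bar{c}_l^{\,k+1}\Omega_l^{k+1}+\kappa\sum_{l=1}^{N_{k+1}}\bar{c}_l^{\,k+1}\sum_{i=0}^{k+1}\sum_{j\in\mathcal{N}_l^{k+1}(i)}\sin(\theta_j-\theta_l^{k+1}),
$$
and the double sine sum is split into the intra-$\mathcal{G}_{k+1}$ contribution ($i=k+1$) and the cross-group contribution ($i\le k$).

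The intra-$\mathcal{G}_{k+1}$ part is handled exactly as in Lemma \ref{dynamics_Q0}: strong connectivity of $\mathcal{G}_{k+1}$, together with the analog of Lemma \ref{eta_sin_inequality} applied inside $\mathcal{G}_{k+1}$ and the enlarged coefficient recursion \eqref{coeffi_k} (which uses $2N$ rather than $2N_{k+1}$), yields the coercive piece
$$
-\,\frac{\kappa}{\sum_{j=1}^{N-1}\eta^jA(2N,j)+1}\,\frac{\sin\gamma}{\gamma}\,Q^{k+1}(t).
$$
Each cross-group sine is bounded in absolute value by $1$; summing over the at most $N$ external neighbors of every oscillator in $\mathcal{G}_{k+1}$ and weighting by the convex coefficients $\bar{c}_l^{\,k+1}$ (which total $1$) produces an additive perturbation of size at most $\kappa(2N+1)D_k(\theta(t))$, after absorbing a harmless $O(Q^{k+1})$ remainder into the coercive term, which is possible because the enlarged scheme \eqref{coeffi_k} leaves margin for such an absorption. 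Combining this with the symmetric estimate for $-\dot{\underline{\theta}}_{q^\ast}$ obtained through Algorithm $\mathcal{A}_2$ yields precisely the inequality \eqref{F-1} at stage $k+1$.

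The main obstacle is the bookkeeping inside the intra-group telescoping: at each iteration of $\mathcal{A}_1$ in $\mathcal{G}_{k+1}$, the neighbor certifying the non-general-root property may in fact lie in some $\mathcal{G}_i$ with $i\le k$ rather than in $\mathcal{G}_{k+1}$. One must verify that, thanks to the enlarged weights in \eqref{coeffi_k}, any such cross-group occurrence can be extracted as an $O(D_k(\theta))$ term and moved into the external perturbation, leaving a purely intra-group chain to which the $\eta$-weighted argument of Lemma \ref{eta_sin_inequality} applies as in the strong-connected case. A short continuity check using Lemma \ref{diameter_alpha} on $[0,\bar t\,]$, together with the inductive bound \eqref{F-2} at stage $k$, is also used to keep $D_{k+1}(\theta(t))<\gamma$ throughout, so that the concavity of $\sin$ on $[0,\gamma]$ remains available for the comparison estimates.
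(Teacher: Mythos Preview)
Your outline has the right skeleton (case split on which node realizes the extrema, intra/cross-group decomposition, bootstrap to keep $D_{k+1}<\gamma$), but two of the load-bearing steps are not actually justified. First, the sentence ``each cross-group sine is bounded in absolute value by $1$ \ldots\ produces an additive perturbation of size at most $\kappa(2N+1)D_k(\theta(t))$'' is inconsistent: bounding $|\sin|\le 1$ yields a term of order $\kappa N$, not $\kappa N D_k(\theta)$, and that would swamp the coercive piece. The paper's mechanism is different and depends on the case assumption $p^\ast=k+1$: from $\bar{\theta}_{k+1}=\max_i\bar{\theta}_i$ one deduces $\theta^{k+1}_{N_{k+1}}\ge \min_{i\le k}\min_j\theta^i_j$, so every positive cross-group contribution $\sin(\theta^l_j-\theta^{k+1}_i)$ is dominated by $\theta^l_j-\min_{i\le k}\min_j\theta^i_j\le D_k(\theta)$ (and negative ones are discarded or used to cancel intra-group positives). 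Without this geometric observation you cannot get a factor $D_k$.

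Second, in the mixed case $p^\ast=k+1$, $q^\ast=q\le k$ you cannot ``fold the $\mathcal{G}_{\le k}$-extremum into the inductive bound'': the inductive hypothesis \eqref{F-1} controls $\dot Q^k$, not $\dot{\underline{\theta}}_q$ separately, so a fresh estimate of $\dot{\underline{\theta}}_q$ from Algorithm~$\mathcal{A}_2$ on $\mathcal{G}_q$ is required. More importantly, the intra-$\mathcal{G}_{k+1}$ telescoping only produces $-\sin(\theta^{k+1}_{N_{k+1}}-\theta^{k+1}_1)$, and the intra-$\mathcal{G}_q$ one only $-\sin(\theta^q_{N_q}-\theta^q_1)$; their sum need not dominate $Q^{k+1}\le\theta^{k+1}_{N_{k+1}}-\theta^q_1$. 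The paper closes this gap by inserting $\pm(\max_{i\le k}\max_j\theta^i_j-\theta^q_{N_q})$, absorbing the added piece as $\le D_k(\theta)$, and---when $\theta^{k+1}_1>\max_{i\le k}\max_j\theta^i_j$---by invoking the spanning-tree arc from $\bigcup_{l\le k}\mathcal{G}_l$ into $\mathcal{G}_{k+1}$, which supplies a negative cross-group sine that bridges $\theta^{k+1}_1$ to $\max_{i\le k}\max_j\theta^i_j$. Your proposal contains no mechanism for this bridging. Finally, the ``main obstacle'' you flag (a cross-group neighbor certifying non-general-root inside the $\mathcal{A}_1$ iteration) does not arise: $\mathcal{G}_{k+1}$ is strongly connected, so the intra-group chain of Lemma~\ref{eta_sin_inequality} always exists; the cross-group arcs enter only as separate additive terms in $\dot\theta^{k+1}_l$.
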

\begin{proof}
Similar as before, we will use proof by contradiction criteria to verify the ansatz for $Q^{k+1}$. To this end, we first define a set below,
\begin{equation*}
\mathcal{B}_{k+1} = \{T>0 \  : \ D_{k+1}(\theta(t)) < \gamma, \ \forall \ t \in [0,T)\}.
\end{equation*}
From Lemma \ref{diameter_alpha}, we know that
\begin{equation*}
D_{k+1}(\theta(t)) \le D(\theta(t)) < \alpha < \gamma, \quad \forall \ t \in [0, \bar{t}).
\end{equation*}
It is clear that $\bar{t} \in \mathcal{B}_{k+1}$. Thus the set $\mathcal{B}_{k+1}$ is not empty. We define $T^* = \sup \mathcal{B}_{k+1}$, and will prove by contradiction that $T^* = +\infty$. Suppose not, i.e., $T^* < +\infty$. It is obvious that
\begin{equation}\label{F-4}
\bar{t} \le T^*, \quad D_{k+1}(\theta(t)) < \gamma, \ \forall \ t \in [0,T^*), \quad D_{k+1}(\theta(T^*)) = \gamma.
\end{equation}

Since the solution to system \eqref{KuM} is analytic, in the finite time interval $[0,T^*)$, $\bar{\theta}_i$ and $\bar{\theta}_j$ either collide finite times or always stay together. Similar to the analysis in Lemma \ref{dynamics_Q0}, without loss of generality, we only consider the situation that there is no pair of $\bar{\theta}_i$ and $\bar{\theta}_j$  staying together through all period $[0,T^*)$. That means the order of $\{\bar{\theta}_i\}_{i=0}^{k+1}$ will only exchange finite times in $[0,T^*)$, so does $\{\underline{\theta}_i\}_{i=0}^{k+1}$ . Thus, we divide the time interval $[0,T^*)$ into a finite union as below
\[[0,T^*) = \bigcup_{l=1}^r J_l, \quad J_l = [t_{l-1},t_l).\]
such that the orders of both $\{\bar{\theta}_i\}_{i=0}^{k+1}$ and $\{\underline{\theta}_i\}_{i=0}^{k+1}$ are preseved in each interval $J_l$. In the following, we will show the contradiction in two steps.\newline

\noindent $\star$ \textbf{Step 1.} In this step, we first verify the Claim \eqref{F-1} holds for $Q^{k+1}$ on $[0, T^*)$, i.e.,
\begin{equation}\label{F-a20}
\dot{Q}^{k+1}(t) \le D(\Omega) - \kappa \frac{1}{\sum_{j=1}^{N-1} (\eta^j A(2N,j)) + 1} \frac{\sin \gamma}{\gamma} Q^{k+1}(t)+ \kappa (2N+1) D_k(\theta(t)), \quad \mbox{on} \ [0, T^*).
\end{equation}
As the proof is rather lengthy, we put the detailed proof in Appendix \ref{appendix 2}. 
\newline

\noindent $\star$ \textbf{Step 2.} In this step, we will study the upper bound of $Q^{k+1}$ in \eqref{F-a20} in time interval $[t_k,T^*)$, where $t_k$ is defined in Ansatz $\eqref{F-2}$ for $D_k(\theta)$. For the sake of discussion, we rewrite the equation \eqref{F-a20}
\begin{equation}\label{F-d20}
\dot{Q}^{k+1}(t) \le -\kappa \frac{\sin \gamma}{(\sum_{j=1}^{N-1} (\eta^j A(2N,j)) + 1)\gamma} \left(Q^{k+1}(t) - cD_k(\theta(t)) -\frac{\tilde{c}}{\kappa}\right), \quad t\in [0,T^*),
\end{equation}
where the expressions of $c$ and $\tilde{c}$ are given as below
\begin{equation}\label{F-dd20}
c = \frac{(2N+1)(\sum_{j=1}^{N-1} (\eta^j A(2N,j)) + 1)\gamma }{\sin \gamma}\quad \mbox{and} \quad \tilde{c} = \frac{D(\Omega)(\sum_{j=1}^{N-1} (\eta^j A(2N,j)) + 1)\gamma }{\sin \gamma}.
\end{equation}
For the term $D_k(\theta)$ in \eqref{F-d20}, 
by induction principle, we have assumed that the Claim \eqref{F-2} holds for $D_k(\theta)$, i.e., there exists time $t_k$ such that
\begin{equation}\label{F-21}
 D_k(\theta(t)) \le \frac{\beta^{d-k} D^\infty}{(4c)^{d-k}}, \quad \ t \in [t_k, +\infty), \quad t_k < \bar{t}.
\end{equation}
For the term $\frac{\tilde{c}}{\kappa}$ in \eqref{F-d20}, from the condition \eqref{condition_2}, it is obvious that
\begin{equation*}
\kappa > \left( 1 + \frac{(d+1)\alpha}{\alpha - D(\theta(0))}\right) \frac{(4c)^d \tilde{c}}{\beta^{d+1}D^\infty} > \frac{(4c)^d \tilde{c}}{\beta^{d+1}D^\infty},
\end{equation*}
which directly yields that
\begin{equation}\label{F-22}
\frac{\tilde{c}}{\kappa} < \frac{\beta^{d+1}D^\infty}{(4c)^d} < \frac{\beta^{d-k}D^\infty}{4^{d-k}c^{d-k-1}}, \quad \mbox{where} \  0 \le k \le d-1, \quad \beta < 1, \quad c > 1.
\end{equation}
 Then we add the esimates of the two terms $D_k(\theta)$ and $\frac{\tilde{c}}{\kappa}$ in \eqref{F-21} and \eqref{F-22} to get
\begin{equation}\label{F-23}
cD_k(\theta(t)) +\frac{\tilde{c}}{\kappa} < c \frac{\beta^{d-k} D^\infty}{(4c)^{d-k}} + \frac{\beta^{d-k}D^\infty}{4^{d-k}c^{d-k-1}} < \frac{\beta^{d-k}D^\infty}{ 2(4c)^{d-k-1}} < \frac{\beta^{d-k}D^\infty}{ (4c)^{d-k-1}} , \quad t \in [t_k, +\infty).
\end{equation}
Since $t_k < \bar{t} \le T^*$ where $\bar{t}$ is obtained in Lemma \ref{diameter_alpha}, it makes sense when we consider the time interval $[t_k, T^*)$. Now based on the above estiamte \eqref{F-23}, we apply the differential equation \eqref{F-d20} and study the upper bound of $Q^{k+1}$ on $[t_k, T^*)$. We claim that
\begin{equation}\label{F-a23}
Q^{k+1}(t) \le \max \left\{Q^{k+1}(t_k), \frac{\beta^{d-k}D^\infty}{ (4c)^{d-k-1}} \right\} := M_{k+1}, \quad t \in [t_k, T^*).
\end{equation}
Suppose not, then there exists some $\tilde{t} \in (t_k, T^*)$ such that $Q^{k+1}(\tilde{t}) > M_{k+1}$. We construct a set
\[\mathcal{C}_{k+1} := \{t_k \le t < \tilde{t} : Q^{k+1}(t) \le M_{k+1}\}.\]
Since $Q^{k+1}(t_k) \le M_{k+1}$, the set $\mathcal{C}_{k+1}$ is not empty. Then we denote $t^* = \sup \mathcal{C}_{k+1}$. It is easy to see that
\begin{equation}\label{F-b23}
t^* < \tilde{t}, \quad Q^{k+1}(t^*)=M_{k+1}, \quad Q^{k+1}(t) > M_{k+1} \quad \mbox{for} \ t \in (t^*, \tilde{t}].
\end{equation}
According to the construction of $M_{k+1}$, \eqref{F-23} and \eqref{F-b23}, it is clear that for $t \in (t^*, \tilde{t}]$
\begin{equation*}
\begin{aligned}
&-\kappa \frac{\sin \gamma}{(\sum_{j=1}^{N-1} (\eta^j A(2N,j)) + 1)\gamma} \left(Q^{k+1}(t) - cD_k(\theta(t)) -\frac{\tilde{c}}{\kappa}\right)\\
& < -\kappa \frac{\sin \gamma}{(\sum_{j=1}^{N-1} (\eta^j A(2N,j)) + 1)\gamma} \left(M_{k+1} - \frac{\beta^{d-k}D^\infty}{ (4c)^{d-k-1}}\right) \le 0.
\end{aligned}
\end{equation*}
Apply the above inequality and integrate on both sides of $\eqref{F-d20}$ from $t^*$ to $\tilde{t}$ to get
\begin{equation*}
\begin{aligned}
0 < Q^{k+1}(\tilde{t}) - M_{k+1} &= Q^{k+1}(\tilde{t}) - Q^{k+1}(t^*) \\
&\le \int_{t^*}^{\tilde{t}} -\kappa \frac{\sin \gamma}{(\sum_{j=1}^{N-1} (\eta^j A(2N,j)) + 1)\gamma} \left(Q^{k+1}(t) - cD_k(\theta(t)) -\frac{\tilde{c}}{\kappa}\right) dt < 0,
\end{aligned}
\end{equation*}
which is an obvious contradiction. Thus we complete the proof of \eqref{F-a23}.\newline

\noindent $\star$ \textbf{Step 3.} In this step, we will construct a contradiction to \eqref{F-4}.
According to \eqref{F-a23}, Lemma \ref{diameter_alpha} and the fact that
\begin{equation*}
\frac{\beta^{d-k}D^\infty}{ (4c)^{d-k-1}} < D^\infty, \quad t_k < \bar{t}, \quad Q^{k+1}(t_k) \le D_{k+1}(\theta(t_k)) \le D(\theta(t_k)) < \alpha,
\end{equation*}
it yields that
\begin{equation*}
Q^{k+1}(t) \le \max \left\{Q^{k+1}(t_k), \frac{\beta^{d-k}D^\infty}{ (4c)^{d-k-1}} \right\} < \max \{\alpha, D^\infty\} = \alpha, \quad t \in [t_k, T^*).
\end{equation*}
Applying Lemma \ref{beta_QD_k} and the condition \eqref{condition_1}, we immediately have
\begin{equation*}
D_{k+1}(\theta(t)) \le \frac{Q^{k+1}(t)}{\beta} < \frac{\alpha}{\beta} < \gamma, \quad t \in [t_k, T^*).
\end{equation*}
Due to the continuity of $D_{k+1}(\theta(t))$, we have
\begin{equation*}
D_{k+1}(\theta(T^*)) = \lim_{t \to (T^*)^-} D_{k+1}(\theta(t)) \le \frac{\alpha}{\beta} < \gamma,
\end{equation*}
which obviously contradicts to the assumption $D_{k+1}(\theta(T^*)) = \gamma$ in \eqref{F-4}. \newline

Thus, we combine all above analysis to conclude that $T^* = +\infty$, that is to say,
\begin{equation}\label{F-c23}
D_{k+1}(\theta(t)) < \gamma, \quad \forall \ t \in [0, +\infty).
\end{equation}
Then for any finite time $T>0$, according to \eqref{F-c23}, we can repeat the analysis in Step 1 to obtain that the differential inequality \eqref{F-1} holds for $Q^{k+1}$ on $[0,T)$. This yields the dynamics of $Q^{k+1}$ in whole time interval as below:
\begin{equation}\label{F-24}
\dot{Q}^{k+1}(t) \le D(\Omega) - \kappa \frac{1}{\sum_{j=1}^{N-1} (\eta^j A(2N,j)) + 1} \frac{\sin \gamma}{\gamma} Q^{k+1}(t)+ \kappa (2N+1) D_k(\theta(t)), \quad \mbox{on} \ [0, +\infty).
\end{equation}
Therefore, we complete the proof of the Claim \eqref{F-1} for $Q^{k+1}$.

\end{proof}

\begin{lemma}\label{L4.4}
Suppose the conditions in Lemma \ref{L4.2} are fulfilled, and the ansatz in \eqref{F-1}, \eqref{F-2} and \eqref{F-3} holds for some $k$ with $0 \le k \le d-1$. Then the ansatz  \eqref{F-2} and \eqref{F-3} holds for $k+1$.
\end{lemma}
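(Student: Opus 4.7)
The plan is to run the same three-step scheme as in Lemma \ref{diameter_0_small}, applied to the differential inequality \eqref{F-d20} supplied by Lemma \ref{L4.3} for $Q^{k+1}$, but with the new inhomogeneous source $cD_k(\theta(t))$ now controlled via the inductive hypothesis \eqref{F-2} at level $k$.

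First I would restrict attention to $t \ge t_k$ and combine \eqref{F-2} at level $k$ with the bound $\tilde{c}/\kappa < \beta^{d+1}D^\infty/(4c)^d$ extracted from \eqref{condition_2} --- this is exactly the computation already carried out in \eqref{F-23} --- to obtain
\[
cD_k(\theta(t)) + \frac{\tilde{c}}{\kappa} < \frac{\beta^{d-k}D^\infty}{2(4c)^{d-k-1}}, \qquad t \ge t_k.
\]
With this, the right-hand side of \eqref{F-d20} becomes a clean dissipation for $Q^{k+1}$ as soon as $Q^{k+1}(t)$ exceeds the threshold $M_{k+1}:=\beta^{d-k}D^\infty/(4c)^{d-k-1}$, which is twice the bound above.

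Then, setting $t_{k+1} := \inf\{t \ge t_k : Q^{k+1}(t) \le M_{k+1}\}$ (with the convention $t_{k+1}=t_k$ if $Q^{k+1}(t_k)$ is already below $M_{k+1}$, covering the analogue of Case 2 in Lemma \ref{diameter_0_small}), a continuity/contradiction argument identical in form to Case 1 of Lemma \ref{diameter_0_small} shows that $Q^{k+1}$ decreases at a uniform positive rate until it reaches $M_{k+1}$ and stays below thereafter. Lemma \ref{beta_QD_k} then converts this to $D_{k+1}(\theta(t)) \le Q^{k+1}(t)/\beta \le \beta^{d-k-1}D^\infty/(4c)^{d-k-1}$ for all $t \ge t_{k+1}$, which is precisely \eqref{F-2} at level $k+1$.

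For the time estimate \eqref{F-3} at level $k+1$, the drop time from $Q^{k+1}(t_k) \le D(\theta(t_k)) < \alpha$ (legitimate because the inductive $t_k < \bar{t}$ lets Lemma \ref{diameter_alpha} apply) down to $M_{k+1}$ is at most $\alpha$ divided by the uniform decay rate. A short comparison using $\beta < 1$ and $c > 1$ shows this decay rate dominates $\kappa\frac{\sin\gamma}{(\sum_{j=1}^{N-1}\eta^j A(2N,j)+1)\gamma}\frac{\beta^{d+1}D^\infty}{(4c)^d}$, which in turn strictly exceeds the denominator appearing in \eqref{F-3}. Adding the drop time to the inductive bound on $t_k$ upgrades the numerator from $(k+1)\alpha$ to $(k+2)\alpha$, matching \eqref{F-3} at level $k+1$. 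The principal hurdle is then verifying the second inequality $t_{k+1} < \bar{t}$: a direct algebraic manipulation of \eqref{condition_2} gives
\[
\kappa\frac{\sin\gamma}{(\sum_{j=1}^{N-1}\eta^j A(2N,j)+1)\gamma}\frac{\beta^{d+1}D^\infty}{(4c)^d} - D(\Omega) > \frac{(d+1)\alpha D(\Omega)}{\alpha-D(\theta(0))},
\]
so the $(k+2)$-numerator bound on $t_{k+1}$ is controlled by $\frac{(k+2)(\alpha-D(\theta(0)))}{(d+1)D(\Omega)}$, which is at most $\bar{t}$ precisely because $k \le d-1$. This closes the induction and yields both \eqref{F-2} and \eqref{F-3} at level $k+1$.
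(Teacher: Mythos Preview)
Your proposal is correct and follows essentially the same approach as the paper's own proof: restrict to $t\ge t_k$, use the inductive bound \eqref{F-2} together with \eqref{condition_2} to control $cD_k(\theta)+\tilde c/\kappa$ as in \eqref{F-23}, split into the two cases $Q^{k+1}(t_k)\gtrless M_{k+1}$, define the stopping time $t_{k+1}$, apply the uniform-decay/contradiction argument, and finish with Lemma~\ref{beta_QD_k} and the algebraic reduction of \eqref{condition_2} to get $t_{k+1}<\bar t$. The only cosmetic difference is that the paper references the trapping argument \eqref{F-a23} from Lemma~\ref{L4.3} rather than the analogous step in Lemma~\ref{diameter_0_small}, but the mechanism is identical.
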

\begin{proof}
According to Lemma \ref{L4.3}, we know the dynamic of $Q^{k+1}$ is governed by \eqref{F-24}. For the sake of discussion, we rewrite the  differential equation \eqref{F-24} and consider it on $[t_k, + \infty)$,
\begin{equation}\label{F-25}
\dot{Q}^{k+1}(t) \le -\kappa \frac{\sin \gamma}{(\sum_{j=1}^{N-1} (\eta^j A(2N,j)) + 1)\gamma} \left(Q^{k+1}(t) - cD_k(\theta(t)) -\frac{\tilde{c}}{\kappa}\right), \quad t\in [t_k,+\infty),
\end{equation}
where $c$ and $\tilde{c}$ are given in \eqref{F-dd20}. In the following, we will find time $t_{k+1}$ after which the quantity $Q^{k+1}$ in \eqref{F-25} is uniformly bounded. There are two cases we need to consider separately.\newline

\noindent$\bullet$ \textbf{Case 1.} We first consider the case that $Q^{k+1}(t_k) > \frac{\beta^{d-k}D^\infty}{ (4c)^{d-k-1}}$. In this case,  When $Q^{k+1}(t) \in [\frac{\beta^{d-k}D^\infty}{ (4c)^{d-k-1}},Q^{k+1}(t_k)]$, according to \eqref{F-23} and \eqref{F-25}, we have
\begin{equation}\label{F-d25}
\begin{aligned}
\dot{Q}^{k+1}(t) &\le -\kappa \frac{\sin \gamma}{(\sum_{j=1}^{N-1} (\eta^j A(2N,j)) + 1)\gamma} \left(\frac{\beta^{d-k}D^\infty}{ (4c)^{d-k-1}} -\frac{\beta^{d-k}D^\infty}{ 2(4c)^{d-k-1}}\right) \\
&= -\kappa \frac{\sin \gamma}{(\sum_{j=1}^{N-1} (\eta^j A(2N,j)) + 1)\gamma} \frac{\beta^{d-k}D^\infty}{ 2(4c)^{d-k-1}} < 0.
\end{aligned}
\end{equation}
This means that when $Q^{k+1}(t)$ is located in the interval $[\frac{\beta^{d-k}D^\infty}{ (4c)^{d-k-1}},Q^{k+1}(t_k)]$, $Q^{k+1}(t)$ will keep decreasing with a uniform rate.
Therefore, we can define a stopping time $t_{k+1}$ as follows,
\[t_{k+1}=\inf \{t\geq t_k\ |\ Q^{k+1}(t)\le \frac{\beta^{d-k}D^\infty}{ (4c)^{d-k-1}}\}.\]
Then, according to the definition of $t_{k+1}$, we know that $Q^{k+1}$ will decrease before $t_{k+1}$ and has the following property at $t_{k+1}$,
\begin{equation}\label{F-a26}
Q^{k+1}(t_{k+1}) = \frac{\beta^{d-k}D^\infty}{ (4c)^{d-k-1}}.
\end{equation}
Moreover, according to \eqref{F-d25}, it is obvious the stopping time $t_{k+1}$ satisfies the following upper bound estimate, 
\begin{equation}\label{F-26}
t_{k+1} \le \frac{Q^{k+1}(t_k) - \frac{\beta^{d-k}D^\infty}{ (4c)^{d-k-1}}}{\kappa \frac{\sin \gamma}{(\sum_{j=1}^{N-1} (\eta^j A(2N,j)) + 1)\gamma} \frac{\beta^{d-k}D^\infty}{ 2(4c)^{d-k-1}}} + t_k.
\end{equation}
Now we study the upper bound of $Q^{k+1}$ on $[t_{k+1}, +\infty)$.
Coming back to \eqref{F-d25},we can apply \eqref{F-a26} and the same arguments as \eqref{F-a23} to derive  
\begin{equation}\label{F-a31}
Q^{k+1}(t) \le \frac{\beta^{d-k}D^\infty}{ (4c)^{d-k-1}}, \quad t \in [t_{k+1}, +\infty).
\end{equation}

On the other hand, in order to verify \eqref{F-3}, we do further estimates on $t_{k+1}$ in \eqref{F-26}. For the first part on the right-hand side of \eqref{F-26}, according to Lemma \ref{diameter_alpha}, $t_k < \bar{t}$ and the fact that
\begin{equation*}
Q^{k+1}(t_k) \le D_{k+1}(\theta(t_k)) \le D(\theta(t_k)) < \alpha, \quad \frac{\beta^{d-k}D^\infty}{ 2(4c)^{d-k-1}} > \frac{\beta^{d+1}D^\infty}{(4c)^d},
\end{equation*}
we have the following estimates,
\begin{equation}\label{F-27}
\frac{Q^{k+1}(t_k) - \frac{\beta^{d-k}D^\infty}{ (4c)^{d-k-1}}}{\kappa \frac{\sin \gamma}{(\sum_{j=1}^{N-1} (\eta^j A(2N,j)) + 1)\gamma} \frac{\beta^{d-k}D^\infty}{ 2(4c)^{d-k-1}}} < \frac{\alpha}{\kappa \frac{\sin \gamma}{(\sum_{j=1}^{N-1} (\eta^j A(2N,j)) + 1)\gamma} \frac{\beta^{d+1}D^\infty}{(4c)^d} - D(\Omega)}.
\end{equation}
For the term $t_k$ in \eqref{F-26},
based on the assumption \eqref{F-3} for $t_k$, we have
\begin{equation}\label{F-28}
t_k < \frac{(k+1)\alpha}{\kappa \frac{\sin\gamma}{(\sum_{j=1}^{N-1} \eta^j A(2N,j) + 1)\gamma} \frac{\beta^{d+1}D^\infty}{(4c)^d} - D(\Omega)} < \bar{t} = \frac{\alpha - D(\theta(0))}{D(\Omega)}.
\end{equation}
Thus it yields from \eqref{F-26}, \eqref{F-27} and \eqref{F-28}  that the time $t_{k+1}$ can be estimated as below
\begin{equation}\label{F-29}
t_{k+1} < \frac{(k+2)\alpha}{\kappa \frac{\sin\gamma}{(\sum_{j=1}^{N-1} \eta^j A(2N,j) + 1)\gamma} \frac{\beta^{d+1}D^\infty}{(4c)^d} - D(\Omega)}.
\end{equation}
Moreover, according to \eqref{condition_21}, the coupling strength $\kappa$ satisfies the following inequality
\begin{equation}\label{F-30}
\kappa > \left( 1 + \frac{(d+1)\alpha}{\alpha - D(\theta(0))}\right) \frac{(4c)^d \tilde{c}}{\beta^{d+1}D^\infty} > \left( 1 + \frac{(k+2)\alpha}{\alpha - D(\theta(0))}\right) \frac{(4c)^d \tilde{c}}{\beta^{d+1}D^\infty}, \quad 0\le k \le d-1,
\end{equation}
thus we combine \eqref{F-29} and \eqref{F-30} to verify the ansatz \eqref{F-3} for $k+1$ in the first case, i.e.,  the time $t_{k+1}$ has the following estimate,
\begin{equation}\label{F-31}
t_{k+1} < \bar{t} = \frac{\alpha - D(\theta(0))}{D(\Omega)}.
\end{equation}

\noindent $\bullet$ \textbf{Case 2.} For another case that $Q^{k+1}(t_k) \le \frac{\beta^{d-k}D^\infty}{ (4c)^{d-k-1}}$. Similar to the analysis in \eqref{F-a23}, we apply \eqref{F-d25} to conclude that
\begin{equation}\label{F-b31}
Q^{k+1}(t) \le \frac{\beta^{d-k}D^\infty}{ (4c)^{d-k-1}}, \quad t \in [t_{k}, +\infty).
\end{equation}
This allows us to directly set $t_{k+1} = t_k$. Then, according to \eqref{F-28}, we know \eqref{F-29} and \eqref{F-31} hold, which finish the verification of the ansatz \eqref{F-3} in the second case. \newline

Finally, we are ready to verify the ansatz \eqref{F-2} and \eqref{F-3} for $k+1$. Actually, we can apply \eqref{F-a31}, \eqref{F-b31} and Lemma \ref{beta_QD_k} to have the upper bound of $D_{k+1}(\theta)$ on $[t_{k+1}, +\infty)$ as below
\begin{equation}\label{F-32}
\begin{aligned}
D_{k+1}(\theta(t)) \le \frac{Q^{k+1}(t)}{\beta} \le \frac{\beta^{d-k-1}D^\infty}{ (4c)^{d-k-1}}, \quad t \in [t_{k+1}, +\infty),
\end{aligned}
\end{equation}
Then we combine \eqref{F-29}, \eqref{F-31} and \eqref{F-32} in Case 1 and similar analysis in Case 2 to conclude that the Claim \eqref{F-2} and \eqref{F-3} are true for $D^{k+1}(\theta)$.
\end{proof}
\vspace{0.5cm}

\noindent \textbf{Proof of Theorem  \ref{enter_small}:} Now, we are ready to prove the main theorem. Combining Lemma \ref{L4.2}, Lemma \ref{L4.3} and Lemma \ref{L4.4}, we apply inductive criteria to conclude that the ansatz \eqref{F-1} --\eqref{F-3} hold for all $0\leq k\leq d$. Then, we immediately obtain from \eqref{F-2} that there exists time $t_d$ such that
\begin{equation*}
D(\theta(t)) = D_d(\theta(t)) \le D^\infty, \quad \mbox{for} \ t \in [t_d, +\infty),
\end{equation*}
which yields the desired result in Theorem \ref{enter_small}. 

\begin{remark}
In Theorem \ref{enter_small}, we show that the phase diameter will enter into a small region after some finite time, which means $\cos x$ is positive after the finite time. Therefore, we can lift \eqref{KuM} to the second-order formulation, which enjoys the similar form to Cucker-Smale model with the interaction function $\cos x$. 

More precisely, we can introduce phase velocity $\omega_i(t) := \dot{\theta}_i(t)$ for each oscillator, and 
directly differentiate \eqref{KuM} with respect to time $t$ to derive the equivalent Cucker-Smale type second order model as below
\begin{equation}\label{s_KuM}
\begin{cases} 
\displaystyle \dot{\theta}_i(t) = \omega_i(t), \quad t > 0, \quad i =1,2,\ldots,N, \\
\displaystyle \dot{\omega}_i(t) = \kappa \sum_{j \in \mathcal{N}_i} \cos (\theta_j(t) - \theta_i(t)) (\omega_j(t) - \omega_i(t)), \\
\displaystyle \omega_i(0) = \dot{\theta}_i(0).
\end{cases}
\end{equation}
\end{remark}

\begin{corollary}\label{complete_syn}
Let $\theta_i$ be a solution to system \eqref{s_KuM} and suppose the assumptions in Lemma \ref{L4.2} are fulfilled. Moreover, assume that there exists time $t_* > 0$ such that
\begin{equation}\label{small_region}
D(\theta(t)) \le D^\infty, \quad \ t \in [t_*, +\infty),
\end{equation}
where $D^\infty < \frac{\pi}{2}$ is a small positive constant. Then there exist positive constants $C_1$ and $C_2$ such that
\begin{equation*}
D(\omega(t)) \le C_1 e^{-C_2 (t- t_*)}, \quad t > t_*,
\end{equation*}
where $D(\omega(t)) = \max_{1\le i \le N} \{\omega_i(t)\} - \min_{1\le i \le N} \{\omega_i(t)\}$ is the diameter of phase velocity.
\end{corollary}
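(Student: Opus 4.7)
The plan is to reduce the problem, on the time interval $[t_*,+\infty)$, to a Cucker--Smale-type analysis on a digraph with a spanning tree, for which the hypo-coercive machinery already developed in Section \ref{sec:4} (and in \cite{H-L-Z20}) applies almost verbatim with $\cos$ replacing $\sin$. The starting observation is that the hypothesis \eqref{small_region} together with $D^\infty<\pi/2$ forces
\[
\psi_{ij}(t):=\cos(\theta_j(t)-\theta_i(t))\;\ge\;\cos D^\infty \;=:\;\delta_0\;>\;0,\qquad t\ge t_*,\ i,j\in\mathcal V.
\]
Thus on $[t_*,+\infty)$ the system \eqref{s_KuM} becomes the linear non-autonomous consensus system $\dot\omega_i=\kappa\sum_{j\in\mathcal N_i}\psi_{ij}(t)(\omega_j-\omega_i)$ with weights bounded below by $\kappa\delta_0$ on each existing arc and above by $\kappa$.

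Next I would transport the hierarchical construction from Section \ref{sec:4} to the frequency variable. Fix the node decomposition $\mathcal G=\bigcup_{k=0}^d\mathcal G_k$. For each $t\ge t_*$ and each $k$, reorder the frequencies $\omega^k_1(t)\le\cdots\le\omega^k_{N_k}(t)$ inside $\mathcal G_k$ and, using exactly the coefficients $\bar a^k_l,\underline a^k_l$ from \eqref{coeffi_k}, form the convex combinations $\bar\omega^k_l,\underline\omega^k_l$, $\bar\omega_k,\underline\omega_k$, and the quantity
\[
R^k(t):=\max_{0\le i\le k}\bar\omega_i(t)-\min_{0\le i\le k}\underline\omega_i(t),\qquad 0\le k\le d.
\]
Because the coefficients are identical to those used for the phases, the linear-algebraic comparison step reproduces Lemma \ref{beta_QD_k} verbatim for the frequency variable, giving
\[
\beta\,D_k(\omega(t))\;\le\;R^k(t)\;\le\;D_k(\omega(t)),\qquad t\ge t_*,\ 0\le k\le d.
\]

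The main step is the differential estimate for $R^k$. Mimicking the proof of Lemma \ref{dynamics_Q0} and the inductive Step 1 of Lemma \ref{L4.3}, I differentiate $R^k$ in each interval of constant ordering, substitute $\dot\omega_i$ from \eqref{s_KuM}, and apply the sin-chain trick of Lemma \ref{eta_sin_inequality}, now replaced by a cosine-free monotone chain: since $\cos$ is uniformly positive and bounded on $[-D^\infty,D^\infty]$, the telescoping is strictly easier than in the phase case and produces
\[
\dot R^k(t)\;\le\;-\,\kappa\,\mu_1\,R^k(t)\;+\;\kappa\,\mu_2\,D_{k-1}(\omega(t)),\qquad t\ge t_*,
\]
for explicit constants $\mu_1=\mu_1(\delta_0,\eta,N)>0$ and $\mu_2=\mu_2(\eta,N)>0$, with the convention $D_{-1}\equiv 0$. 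Crucially there is no $D(\Omega)$ term because the second-order system \eqref{s_KuM} is unforced, so the inhomogeneity decays with $D_{k-1}(\omega)$.

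The conclusion follows by induction on $k$: for $k=0$ the inequality is a homogeneous linear one, yielding $D_0(\omega(t))\le\beta^{-1}R^0(t_*)e^{-\kappa\mu_1(t-t_*)}$; assuming $D_{k-1}(\omega(t))\le C'e^{-\lambda(t-t_*)}$ with $\lambda<\kappa\mu_1$, the variation of constants formula applied to the $k$-th inequality gives the same type of exponential bound for $D_k(\omega)$, possibly with a slightly smaller rate. Taking $k=d$ yields $D(\omega(t))=D_d(\omega(t))\le C_1 e^{-C_2(t-t_*)}$ with $C_2=\min(\kappa\mu_1,\lambda)$ and $C_1$ determined by the initial frequency spread $D(\omega(t_*))$, which is finite because $\omega_i(t_*)$ is bounded by $|\Omega_i|+\kappa N$ uniformly in $i$.

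The main obstacle I expect is purely bookkeeping: tracking the correct exponential rates through the $d$-level induction so that the rates remain strictly positive at the top level, and handling the finitely many reordering times of the $\bar\omega_i,\underline\omega_i$ by the analyticity argument already used in Section \ref{sec:4}. All other ingredients--the strong-connectivity convex chain, the comparison $\beta D_k(\omega)\le R^k\le D_k(\omega)$, and the cascaded dissipation inequality--carry over directly from the phase analysis, with the simplification that $\cos$ is positive and monotone on $(-\pi/2,\pi/2)$ so that the delicate sign handling of Lemma \ref{eta_sin_inequality} is no longer necessary.
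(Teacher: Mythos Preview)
Your proposal is correct and follows essentially the same route as the paper: the paper's own proof consists of a single sentence invoking Theorem \ref{enter_small} and the Cucker--Smale machinery of \cite{H-L-Z20}, and your sketch is precisely a reconstruction of that machinery---uniform positivity of $\cos$ on $[-D^\infty,D^\infty]$, the same node-wise convex combinations applied to the frequencies, the comparison $\beta D_k(\omega)\le R^k\le D_k(\omega)$, and the cascaded homogeneous differential inequalities (with the $D(\Omega)$ forcing absent in second order). There is no substantive difference in approach.
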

\begin{proof}
We can apply Theorem \ref{enter_small} and the methods and results in the work of Ha et al. \cite{H-L-Z20} for Cucker-Smale model to yield the emergence of exponentially fast synchronization in \eqref{KuM} and \eqref{s_KuM}. As the proof is almost the same as in \cite{H-L-Z20}, we omit the details, and we refer the readers to \cite{H-L-Z20} for more infomation.\newline 
\end{proof}

\section{Summary}\label{sec:5}
\setcounter{equation}{0}
In this paper, we presented a sufficient framework for the complete synchronization of the Kuramoto model with general network containing a spanning tree. To this end, we followed a node decomposition introduced in \cite{H-L-Z20} to construct new quantities which are equivalent to phase diameters. In a large coupling strength, when the initial data is confined in an open half circle, we proved that the phase diameter of the whole ensemble will concentrate into a small region, thus we can apply the method in \cite{H-L-Z20} or \cite{D-H-K20} which yielded that the complete synchronization occurs exponentially fast. However, our analytical method restricts the initial phase configuration to be confined in a half circle. It would be interesting to see whether the restriction on the initial data can be replaced by a generic one. This interesting issue will be further dealt with in our future work.\newline

\begin{appendix}

\section{proof of Lemma \ref{dynamics_Q0}}\label{appendix1}
We will split the proof into six steps. In the first step, we show that the phase diameter of $\mathcal{G}_0$ is bounded by $\gamma$ in a finite time interval. In the second, third and forth steps, we use induction criteria to construct the differential inequality of $Q^0(t)$ in the finite time interval. In the last two steps, we exploit the derived differential inequality of $Q^0(t)$ to conclude that phase diameter of $\mathcal{G}_0$ is bounded by $\gamma$ on $[0, +\infty)$, and thus the differential inequality of $Q^0(t)$ obtained in second step also holds on $[0, +\infty)$.\newline

\noindent $\bigstar$ \textbf{Step 1.} We first define a set
\begin{equation*}
\mathcal{B}_0 := \{ T >0 : \ D_0(\theta(t)) < \gamma, \ \forall \ t \in [0,T) \}.
\end{equation*}
According to Lemma \ref{t_bar}, the set $\mathcal{B}_0$ is non-empty since 
\[D_0(\theta(t))  < \alpha < \gamma, \quad t \in [0,\bar{t}),\]
which implies that $\bar{t} \in \mathcal{B}_0$. In the following, we set $T^* = \sup \mathcal{B}_0$, and prove $T^* = +\infty$ to finish the proof of the lemma. If not, i.e., suppose $T^* < +\infty$, then we apply the continuity of $D_0(\theta(t))$ to have
\begin{equation}\label{C-0}
D_0(\theta(t)) < \gamma, \quad \forall \ t\in [0,T^*), \quad D_0(\theta(T^*)) = \gamma.
\end{equation}
In particular, we have $\bar{t} \le T^*$. According to the standard theory of ordinary differential equation, the solution to system \eqref{KuM} is analytic. Therefore, in the finite time interval $[0,T^*)$, any two oscillators either collide finite times or always stay together. If there are some $\theta_i$ and $\theta_j$ which always stay together in $[0,T^*]$, we can view them as one oscillator and thus the total number of oscillators that we need to study can be reduced. For this more simpler situation, we can deal with it in a similar method. Therefore, we only consider the case that there is no pair of oscillators staying together in all period $[0,T^*)$. In this situation, only finite many collisions occur through $[0,T^*)$. Thus, we divide the time interval $[0,T^*)$ into a finite union as below
\[[0,T^*) = \bigcup_{l=1}^r J_l, \quad J_l = [t_{l-1},t_l),\]
where the end point $t_l$ denotes the collision time. It is clear that there is no collision in the interior of $J_l$. Then we pick out any time interval $J_l$ and assume that
\begin{equation}\label{C-1}
\theta_1^0(t) \le \theta^0_2(t) \le \ldots \le \theta^0_{N_0}(t), \quad t \in J_l. 
\end{equation}
$\ $

\noindent $\bigstar$ \textbf{Step 2.} According to the notations in \eqref{abbreviation_0}, we follow the process $\mathcal{A}_1$ and $\mathcal{A}_2$ to construct $\bar{\theta}^0_n$ and $\underline{\theta}^0_n, \ 1\le n\le N_0$, respecively.
We first consider the dynamics of $\bar{\theta}_{N_0}^0 = \theta_{N_0}^0$,
\begin{equation}\label{C1-1}
\dot{\theta}_{N_0}^0(t) = \Omega^0_{N_0} + \kappa \sum_{j\in \mathcal{N}_{N_0}^0(0)} \sin (\theta^0_j - \theta^0_{N_0}) \le \Omega_M + \kappa \min_{j \in \mathcal{N}^0_{N_0}(0)} \sin(\theta^0_j - \theta^0_{N_0}).
\end{equation}
The last inequality above holds because of the negative sign of $\sin(\theta^0_j(t) - \theta^0_{N_0}(t))$ due to the well-ordered assumption \eqref{C-1}. For the dynamics of $\bar{\theta}^0_{N_0-1}$, according to the process $\mathcal{A}_1$ and $\bar{a}^0_{N_0-1} = \eta(N_0 + 2)$ in \eqref{coeffi_0}, we have estimates for the derivative of $\bar{\theta}^0_{N_0-1}$ as follows,
\begin{align}
\dot{\bar{\theta}}^0_{N_0-1} &= \frac{d}{dt}\left( \frac{\bar{a}^0_{N_0-1} \theta^0_{N_0} + \theta^0_{N_0-1}}{\bar{a}^0_{N_0-1}+1}\right) = \frac{\bar{a}^0_{N_0-1}}{\bar{a}^0_{N_0-1}+1} \dot{\theta}^0_{N_0} + \frac{1}{\bar{a}^0_{N_0-1}+1} \dot{\theta}^0_{N_0-1}\notag\\
&\le \frac{\bar{a}^0_{N_0-1}}{\bar{a}^0_{N_0-1}+1} \left(\Omega_M + \kappa \min_{j \in \mathcal{N}^0_{N_0}(0)} \sin(\theta^0_j - \theta^0_{N_0})\right) \notag\\
&+ \frac{1}{\bar{a}^0_{N_0-1}+1}\left(\Omega^0_{N_0-1} + \kappa \sum_{j \in \mathcal{N}^0_{N_0-1}(0)} \sin(\theta^0_j - \theta^0_{N_0-1})\right)\notag\\
&\le \Omega_M + \kappa \frac{\eta(N_0 + 2)}{\bar{a}^0_{N_0-1}+1}  \min_{j \in \mathcal{N}^0_{N_0}(0)} \sin(\theta^0_j - \theta^0_{N_0}) \label{C-2}\\
&+ \kappa \frac{1}{\bar{a}^0_{N_0-1}+1} \left( \sin(\theta^0_{N_0} - \theta^0_{N_0-1}) + \underset{j \le N_0-1}{\sum_{j\in \mathcal{N}_{N_0-1}^0(0)}} \sin (\theta^0_j - \theta^0_{N_0})\right) \notag\\
&\le \Omega_M + \kappa \frac{\eta}{\bar{a}^0_{N_0-1}+1} \min_{j \in \mathcal{N}^0_{N_0}(0)} \sin(\theta^0_j - \theta^0_{N_0}) + \kappa \frac{1}{\bar{a}^0_{N_0-1}+1} \underset{j \le N_0-1}{\min_{j\in \mathcal{N}_{N_0-1}^0(0)}} \sin (\theta^0_j - \theta^0_{N_0-1}) \notag\\
&+ \kappa \frac{1}{\bar{a}^0_{N_0-1}+1} \underbrace{\left(\eta \min_{j \in \mathcal{N}^0_{N_0}(0)} \sin(\theta^0_j - \theta^0_{N_0}) + \sin(\theta^0_{N_0} - \theta^0_{N_0-1})\right)}_{\mathcal{I}_2}.\notag
\end{align}
We now show the term $\mathcal{I}_2$ is non-positive. We will only consider the situation $\gamma > \frac{\pi}{2}$, and the situation $\gamma \le \frac{\pi}{2}$ can be similarly dealt with. It is obvious that
\[\min_{j \in \mathcal{N}^0_{N_0}(0)} \sin(\theta^0_j - \theta^0_{N_0})  \le \sin(\theta^0_{\bar{k}_{N_0}} - \theta^0_{N_0}), \quad \bar{k}_{N_0} = \min_{j \in  \mathcal{N}^0_{N_0}(0)} j. \]
Note that $\bar{k}_{N_0} \le N_0-1$ since $\bar{\mathcal{L}}^{N_0}_{N_0}(\bar{C}_{N_0,N_0})$ is not a general root. Therefore, if $ 0 \le \theta^0_{N_0}(t) - \theta^0_{\bar{k}_{N_0}}(t) \le \frac{\pi}{2}$, we immediately obtain that
\begin{equation*}
0 \le \theta^0_{N_0}(t) - \theta^0_{N_0-1}(t) \le \theta^0_{N_0}(t) - \theta^0_{\bar{k}_{N_0}}(t) \le \frac{\pi}{2}, 
\end{equation*}
which implies that
\begin{equation*}
\mathcal{I}_2 \le \eta \sin(\theta^0_{\bar{k}_{N_0}} - \theta^0_{N_0}) + \sin(\theta^0_{N_0} - \theta^0_{N_0-1}) \le \sin(\theta^0_{\bar{k}_{N_0}} - \theta^0_{N_0}) + \sin(\theta^0_{N_0} - \theta^0_{N_0-1}) \le 0.
\end{equation*}
On the other hand, if $\frac{\pi}{2} <  \theta^0_{N_0}(t) - \theta^0_{\bar{k}_{N_0}}(t) < \gamma$, we use the fact
\[\eta > \frac{1}{\sin \gamma} \quad \mbox{and} \quad \sin(\theta^0_{N_0}(t) - \theta^0_{\bar{k}_{N_0}}(t)) > \sin \gamma,\] 
to conclude that $\eta \sin(\theta^0_{\bar{k}_{N_0}} - \theta^0_{N_0}) \le -1$. Hence, in this case, we still obtain that
\begin{equation*}
\mathcal{I}_2 \le \eta \sin(\theta^0_{\bar{k}_{N_0}} - \theta^0_{N_0}) + \sin(\theta^0_{N_0} - \theta^0_{N_0-1}) \le -1 + 1 \le 0.
\end{equation*}
Thus, for $t\in J_l$, we combine above analysis to conclude that 
\begin{equation}\label{C-3}
\mathcal{I}_2 = \eta \min_{j \in \mathcal{N}^0_{N_0}(0)} \sin(\theta^0_j - \theta^0_{N_0}) + \sin(\theta^0_{N_0} - \theta^0_{N_0-1}) \le 0.
\end{equation} 
 Then combining \eqref{C-2} and \eqref{C-3}, we derive that
\begin{equation}\label{C-4}
\dot{\bar{\theta}}^0_{N_0-1} \le \Omega_M + \kappa \frac{1}{\bar{a}^0_{N_0-1}+1} \left(\eta \min_{j \in \mathcal{N}^0_{N_0}(0)} \sin(\theta^0_j - \theta^0_{N_0}) + \underset{j \le N_0-1}{\min_{j\in \mathcal{N}_{N_0-1}^0(0)}} \sin (\theta^0_j - \theta^0_{N_0-1}) \right).
\end{equation}
$\ $

\noindent $\bigstar$ \textbf{Step 3.} Now we apply the induction principle to cope with $\bar{\theta}^0_n$ in \eqref{abbreviation_0},  which are construced in the iteration process $\mathcal{A}_1$. We will prove for $1 \le n \le N_0$ that,
\begin{equation}\label{C-5}
\dot{\bar{\theta}}^0_{n}(t) \le \Omega_M + \kappa \frac{1}{\bar{a}^0_{n}+1} \sum_{i=n}^{N_0}\left( \eta^{i-n} \underset{j \le i}{\min_{j\in \mathcal{N}_{i}^0(0)}} \sin (\theta^0_j(t) - \theta^0_{i}(t))\right).
\end{equation}
In fact, \eqref{C-5} already holds for $n = N_0, N_0-1$ from \eqref{C1-1} and \eqref{C-4}. Then, suppose that for $n\le l \le N_0$ where $2 \le n \le N_0$, we have
\begin{equation}\label{C-6}
\dot{\bar{\theta}}^0_{l}(t) \le \Omega_M + \kappa \frac{1}{\bar{a}^0_{l}+1} \sum_{i=l}^{N_0}\left( \eta^{i-l} \underset{j \le i}{\min_{j\in \mathcal{N}_{i}^0(0)}} \sin (\theta^0_j(t) - \theta^0_{i}(t))\right),
\end{equation}
we next verify that \eqref{C-5} still holds for $l = n-1$. 
According to the Algorithm $\mathcal{A}_1$ and \eqref{C-6}, the dynamics of the quantity $\bar{\theta}^0_{n-1}(t)$ has following estimates,

\begin{align}
\dot{\bar{\theta}}^0_{n-1} &= \frac{d}{dt}\left( \frac{\bar{a}^0_{n-1} \bar{\theta}^0_{n} + \theta^0_{n-1}}{\bar{a}^0_{n-1}+1}\right) = \frac{\bar{a}^0_{n-1}}{\bar{a}^0_{n-1}+1} \dot{\bar{\theta}}^0_{n} + \frac{1}{\bar{a}^0_{n-1}+1} \dot{\theta}^0_{n-1} \notag\\
&\le \frac{\bar{a}^0_{n-1}}{\bar{a}^0_{n-1}+1} \left[\Omega_M + \kappa \frac{1}{\bar{a}^0_{n}+1} \sum_{i=n}^{N_0}\left( \eta^{i-n} \underset{j \le i}{\min_{j\in \mathcal{N}_{i}^0(0)}} \sin (\theta^0_j - \theta^0_{i})\right)\right]\notag\\
&+ \frac{1}{\bar{a}^0_{n-1}+1} \left( \Omega^0_{n-1} + \kappa \sum_{j \in \mathcal{N}^0_{n-1}(0)} \sin (\theta^0_j - \theta^0_{n-1})\right)\notag\\
&\le \Omega_M + \kappa \frac{\eta(2N_0-n+2)}{\bar{a}^0_{n-1}+1} \sum_{i=n}^{N_0}\left( \eta^{i-n} \underset{j \le i}{\min_{j\in \mathcal{N}_{i}^0(0)}} \sin (\theta^0_j - \theta^0_{i})\right)\notag \\
&+ \kappa \frac{1}{\bar{a}^0_{n-1}+1} \left( \underset{j \le n-1}{\sum_{j \in \mathcal{N}^0_{n-1}(0)}} \sin (\theta^0_j - \theta^0_{n-1}) + \underset{j > n-1}{\sum_{j \in \mathcal{N}^0_{n-1}(0)}} \sin (\theta^0_j - \theta^0_{n-1})\right) \notag\\
&\le \Omega_M + \underbrace{\kappa \frac{\eta N_0}{\bar{a}^0_{n-1}+1} \sum_{i=n}^{N_0}\left( \eta^{i-n} \underset{j \le i}{\min_{j\in \mathcal{N}_{i}^0(0)}} \sin (\theta^0_j - \theta^0_{i})\right) }_{\mathcal{I}_3} \label{C-7}\\
&+  \kappa \frac{\eta}{\bar{a}^0_{n-1}+1} \sum_{i=n}^{N_0}\left( \eta^{i-n} \underset{j \le i}{\min_{j\in \mathcal{N}_{i}^0(0)}} \sin (\theta^0_j - \theta^0_{i})\right) +  \kappa \frac{1}{\bar{a}^0_{n-1}+1} \underset{j \le n-1}{\min_{j\in \mathcal{N}_{n-1}^0(0)}} \sin (\theta^0_j - \theta^0_{n-1}) \notag\\
&+  \frac{\kappa}{\bar{a}^0_{n-1}+1} \left(\underbrace{\eta(N_0-n+1) \sum_{i=n}^{N_0}\left( \eta^{i-n} \underset{j \le i}{\min_{j\in \mathcal{N}_{i}^0(0)}} \sin (\theta^0_j - \theta^0_{i})\right) + \underset{j > n-1}{\sum_{j \in \mathcal{N}^0_{n-1}(0)}} \sin (\theta^0_j - \theta^0_{n-1})}_{\mathcal{I}_4}\right).\notag
\end{align}
\vspace{1cm}

\noindent In above estimates, we used the fact that
\[\bar{a}^0_{n-1} = \eta(2N_0-n + 2)(\bar{a}^0_{n} +1), \quad \underset{j \le n-1}{\sum_{j \in \mathcal{N}^0_{n-1}(0)}} \sin (\theta^0_j - \theta^0_{n-1}) \le \underset{j \le n-1}{\min_{j\in \mathcal{N}_{n-1}^0(0)}} \sin (\theta^0_j - \theta^0_{n-1}).\]
It is obvious that $\mathcal{I}_3 \le 0$, and thus we can neglect it. In the subsequence, we will deal with $\mathcal{I}_4$ and prove that
\begin{equation}\label{C-8}
\mathcal{I}_4 \le 0.
\end{equation}
In fact, according to Lemma \ref{eta_sin_inequality}, we directly have
\begin{equation}\label{C-9}
\sum_{i=n}^{N_0}\left( \eta^{i-n} \underset{j \le i}{\min_{j\in \mathcal{N}_{i}^0(0)}} \sin (\theta^0_j - \theta^0_{i})\right) \le \sin(\theta^0_{\bar{k}_{n}} - \theta^0_{N_0}), \quad \bar{k}_{n} = \min_{j \in \bigcup_{i=n}^{N_0} \mathcal{N}^0_{i}(0)} j.
\end{equation}
Similar to the analysis in \eqref{C-3}, we only deal with $\mathcal{I}_4$ under the situation $\gamma > \frac{\pi}{2}$. Now we consider two cases    according to the relation of size between $\theta^0_{N_0} - \theta^0_{\bar{k}_{n}}$ and $\frac{\pi}{2}$.\newline
\noindent$\diamond$ For the case that $ 0 \le \theta^0_{N_0}(t) - \theta^0_{\bar{k}_{n}}(t) \le \frac{\pi}{2}$, we immediately obtain that for $j \in \mathcal{N}^0_{n-1}(0), \ j>n-1$,
\begin{equation*}
0 \le \theta^0_{j}(t) - \theta^0_{n-1}(t) \le \theta^0_{N_0}(t) - \theta^0_{n-1}(t) \le \theta^0_{N_0}(t) - \theta^0_{\bar{k}_{n}}(t) \le \frac{\pi}{2},
\end{equation*}
where we use the fact that $\bar{k}_{n} \le n-1$ as $\bar{\mathcal{L}}^{N_0}_{n}(\bar{C}_{n,N_0})$ is not a general root.
Then, we combine \eqref{C-9} to have
\begin{equation*}
\begin{aligned}
\mathcal{I}_4 &\le \eta(N_0-n+1) \sin(\theta^0_{\bar{k}_{n}} - \theta^0_{N_0}) + \underset{j > n-1}{\sum_{j \in \mathcal{N}^0_{n-1}(0)}} \sin (\theta^0_j - \theta^0_{n-1}) \\
&\le (N_0-n+1) \sin(\theta^0_{\bar{k}_{n}} - \theta^0_{N_0})  + (N_0-n+1) \sin(\theta^0_{N_0}(t) - \theta^0_{n-1}(t)) \le 0,
\end{aligned}
\end{equation*}
where we apply the fact $\eta > 1$ and the monotone property of sine function in $[0,\frac{\pi}{2}]$.\newline
\noindent $\diamond$For another case that $\frac{\pi}{2} <  \theta^0_{N_0}(t) - \theta^0_{\bar{k}_{n}}(t) < \gamma$, it is known that
\[\eta > \frac{1}{\sin \gamma} \quad \mbox{and} \quad \sin(\theta^0_{N_0}(t) - \theta^0_{\bar{k}_{n}}(t)) > \sin \gamma,\] 
which means  $\eta \sin(\theta^0_{\bar{k}_{n}} - \theta^0_{N_0}) \le -1$. Thus we obtain that
\begin{equation*}
\begin{aligned}
\mathcal{I}_4 &\le \eta(N_0-n+1) \sin(\theta^0_{\bar{k}_{n}} - \theta^0_{N_0}) + \underset{j > n-1}{\sum_{j \in \mathcal{N}^0_{n-1}(0)}} \sin (\theta^0_j - \theta^0_{n-1}) \\
&\le -(N_0-n+1) + (N_0 -n + 1) = 0.
\end{aligned}
\end{equation*}
Therefore, \eqref{C-8} holds at time $t \in J_l$. Now we combine \eqref{C-7} and \eqref{C-8} to get 
\begin{equation*}
\begin{aligned}
\dot{\bar{\theta}}^0_{n-1} &\le \Omega_M+ \kappa \frac{1}{\bar{a}^0_{n-1}+1}  \left[\sum_{i=n}^{N_0}\left( \eta^{i-(n-1)} \underset{j \le i}{\min_{j\in \mathcal{N}_{i}^0(0)}} \sin (\theta^0_j - \theta^0_{i})\right) +  \underset{j \le n-1}{\min_{j\in \mathcal{N}_{n-1}^0(0)}} \sin (\theta^0_j - \theta^0_{n-1})\right] \\
&= \Omega_M + \kappa \frac{1}{\bar{a}^0_{n-1}+1} \sum_{i=n-1}^{N_0}\left( \eta^{i-(n-1)} \underset{j \le i}{\min_{j\in \mathcal{N}_{i}^0(0)}} \sin (\theta^0_j - \theta^0_{i})\right).
\end{aligned}
\end{equation*}
So far, we complete the proof of the claim \eqref{C-5}.\newline 

\noindent $\bigstar$ \textbf{Step 4.}
Now, we set $n=1$ in \eqref{C-5} and apply Lemma \ref{eta_sin_inequality} to have
\begin{equation}\label{C-10}
\begin{aligned}
\dot{\bar{\theta}}^0_{1}(t) &\le \Omega_M + \kappa \frac{1}{\bar{a}^0_{1}+1} \sum_{i=1}^{N_0}\left( \eta^{i-1} \underset{j \le i}{\min_{j\in \mathcal{N}_{i}^0(0)}} \sin (\theta^0_j(t) - \theta^0_{i}(t))\right)\\
&\le \Omega_M + \kappa \frac{1}{\bar{a}^0_{1}+1} \sin(\theta^0_{\bar{k}_{1}} - \theta^0_{N_0}) = \Omega_M + \kappa \frac{1}{\bar{a}^0_{1}+1} \sin(\theta^0_1 - \theta^0_{N_0}),
\end{aligned}
\end{equation}
where $\bar{k}_{1} = \min_{j \in \bigcup_{i=1}^{N_0} \mathcal{N}^0_{i}(0)} j =1$ due to the strong connectivity of $\mathcal{G}_0$. Similarly, we can follow the process $\mathcal{A}_2$ to construct $\underline{\theta}^0_k$ in \eqref{abbreviation_0} until $k = N_0$. Then, we can apply the similar argument as before to obtain that,
\begin{equation}\label{C-11}
\begin{aligned}
\frac{d}{dt} \underline{\theta}^0_{N_0}(t) &\ge \Omega_m + \kappa \frac{1}{\underline{a}^0_{N_0} + 1} \sum_{i=1}^{N_0}\left( \eta^{N_0-i} \underset{j \ge i}{\max_{j\in \mathcal{N}_{i}^0(0)}} \sin (\theta^0_j(t) - \theta^0_{i}(t))\right) \\
&\ge \Omega_m + \kappa \frac{1}{\underline{a}^0_{N_0} + 1} \sin (\theta^0_{\underline{k}_{N_0}} - \theta^0_1) = \Omega_m + \kappa \frac{1}{\bar{a}^0_{1} + 1} \sin (\theta^0_{N_0} - \theta^0_1),
\end{aligned}
\end{equation}
where we use the strong connectivity and the fact that $\underline{k}_{N_0} = \max_{j \in \bigcup_{i=1}^{N_0} \mathcal{N}^0_{i}(0)} j =N_0$ and $\underline{a}^0_{N_0}=\bar{a}^0_{1}$. Then we recall the notations $\bar{\theta}_0 = \bar{\theta}^0_1$ and $\underline{\theta}_0 = \underline{\theta}^0_{N_0}$, and combine \eqref{C-10} and \eqref{C-11} to obtain that
\begin{equation*}\label{C-12}
\begin{aligned}
\dot{Q}^0(t) &= \frac{d}{dt} (\bar{\theta}_0 - \underline{\theta}_0) \le D(\Omega) - \kappa \frac{2}{\bar{a}^0_{1} + 1} \sin (\theta^0_{N_0} - \theta^0_1) \\
&\le D(\Omega) - \kappa \frac{1}{ \sum_{j=1}^{N_0-1} (\eta^j A(2N_0,j)) + 1} \sin (\theta^0_{N_0} - \theta^0_1),
\end{aligned}
\end{equation*}
In the above estimates, we use the property
\[\bar{a}^0_{1} = \sum_{j=1}^{N_0-1} (\eta^j A(2N_0,j)) .\]
Since the function  $\frac{\sin x}{x}$ is monotonically decreasing in $(0, \pi]$, we apply \eqref{C-0} to obtain  that
\[\sin (\theta^0_{N_0} - \theta^0_1) \ge \frac{\sin \gamma}{\gamma}(\theta^0_{N_0} - \theta^0_1).\]
Moreover, due to the formula $Q^0(t) \le \theta^0_{N_0}(t) - \theta^0_1(t)$, we have
\begin{equation}\label{C-12}
\begin{aligned}
\dot{Q}^0(t) &\le D(\Omega) - \kappa \frac{1}{ \sum_{j=1}^{N_0-1} (\eta^j A(2N_0,j)) + 1} \frac{\sin \gamma}{\gamma} (\theta^0_{N_0} - \theta^0_1) \\
&\le D(\Omega) - \kappa \frac{1}{ \sum_{j=1}^{N_0-1} (\eta^j A(2N_0,j)) + 1} \frac{\sin \gamma}{\gamma} Q^0(t), \quad t \in J_l.
\end{aligned}
\end{equation}
Note that the constructed quantity $Q^0(t) = \bar{\theta}_0(t) - \underline{\theta}_0(t)$ is Lipschitz continuous on $[0,T^*)$. 
Moreover, the above analysis does not depend on the time interval $J_l, \ l=1,2,\ldots,r$, thus the differential inequality \eqref{C-12} holds almost everywhere on $[0,T^*)$. \newline

\noindent $\bigstar$ \textbf{Step 5.} For a given sufficiently small $D^\infty < \min\{\frac{\pi}{2},\alpha\}$, based on the assumption of the coupling strength in \eqref{condition_3}, we have
\begin{equation}\label{C-13}
\kappa > \left( 1 + \frac{\alpha}{\alpha - D(\theta(0))}\right) \frac{\tilde{c}}{\beta D^\infty} > \frac{1}{\beta D^\infty} \frac{D(\Omega) (\sum_{j=1}^{N_0-1}\eta^j A(2N_0,j) + 1)\gamma}{\sin \gamma}
\end{equation}
where
\[\tilde{c} = \frac{D(\Omega) (\sum_{j=1}^{N_0-1}\eta^j A(2N_0,j) + 1)\gamma}{\sin \gamma}.\]
Next we study the upper bound of $Q^0(t)$ in the period $[0,T^*)$. Define
\begin{equation*}
M_0 = \max\left\{Q^0(0), \beta D^\infty\right\}.
\end{equation*}
We claim that
\begin{equation}\label{C-14}
Q^0(t) \le M_0 \quad \mbox{for all} \ t\in [0,T^*).
\end{equation}
Suppose not, then there exists some $\tilde{t} \in [0,T^*)$ such that $Q^0(\tilde{t}) > M_0$. We construct a set
\[\mathcal{C}_0  := \{t < \tilde{t} \ | \ Q^0(t) \le M_0\} .\]
Since $0 \in \mathcal{C}_0$, the set $\mathcal{C}_0$ is not empty. Then we denote $t^* = \sup \mathcal{C}_0$, and immediately obtain that 
\begin{equation}\label{C-15}
t^* < \tilde{t}, \quad Q^0(t^*) = M_0, \quad Q^0(t) > M_0 \quad \mbox{for} \ t \in (t^*, \tilde{t}].
\end{equation}
According to the construction of $M_0$, \eqref{C-13} and \eqref{C-15} , it is known the following estimates hold  for $t \in (t^*, \tilde{t}]$,
\begin{equation*}
\begin{aligned}
&D(\Omega) - \kappa \frac{1}{ \sum_{j=1}^{N_0-1} (\eta^j A(2N_0,j)) + 1} \frac{\sin \gamma}{\gamma} Q^0(t) \\
&< D(\Omega) - \kappa \frac{1}{ \sum_{j=1}^{N_0-1} (\eta^j A(2N_0,j)) + 1} \frac{\sin \gamma}{\gamma} \beta D^\infty < 0.
\end{aligned}
\end{equation*}
Then, we apply the above inequality and integrate on the both sides of \eqref{C-12} from $t^*$ to $\tilde{t}$ to get
\[0 < Q^0(\tilde{t}) - M_0 = Q^0(\tilde{t}) - Q^0(t^*) \le  \int_{t^*}^{\tilde{t}}( D(\Omega) - \kappa \frac{1}{ \sum_{j=1}^{N_0-1} (\eta^j A(2N_0,j)) + 1} \frac{\sin \gamma}{\gamma} Q^0(t) )dt< 0,\]
which is an obvious contradiction. Thus we complete the proof of \eqref{C-14}.\newline

\noindent $\bigstar$ \textbf{Step 6.}
Now, we are ready to show the contradiction to \eqref{C-0}, and thus it implies that $T^*=+\infty$. In fact, due to the fact that $\beta < 1, D^\infty < \alpha$ and $Q^0(0) \le D_0(\theta(0)) < \alpha$, we have
\[Q^0(t) \le M_0 = \max\left\{Q^0(0), \beta D^\infty \right\} < \alpha, \quad t \in [0,T^*). \]
Then we apply the relation $\beta D_0(\theta(t)) \le Q^0(t)$ given in Lemma \ref{beta_QD} and the assumption $\eta > \frac{2}{1 - \frac{\alpha}{\gamma}}$ in \eqref{condition_3} to obtain that
\[D_0(\theta(t)) \le \frac{Q^0(t)}{\beta} < \frac{\alpha}{\beta} < \gamma, \quad t \in [0,T^*) \quad \mbox{where} \ \beta = 1 - \frac{2}{\eta}.\]
As $D_0(\theta(t))$ is continuous, we have
\begin{equation*}
D_0(\theta(T^*)) = \lim_{t \to (T^*)^-}D_0(\theta(t)) \le \frac{\alpha}{\beta} < \gamma,
\end{equation*}
which contradicts to the situation that $D_0(\theta(T^*)) = \gamma$ in \eqref{C-0}.
Therefore, we derive that $T^* = +\infty$, which yields that
\begin{equation}\label{C-a15}
D_0(\theta(t)) < \gamma, \quad \mbox{for all} \ t \in [0, +\infty).
\end{equation}
Then for any finite time $T>0$, we apply \eqref{C-a15} and repeat the same argument in the second, third, forth steps to obtain the dynamics of $Q^0(t)$ in \eqref{C-12} holds on $[0,T)$. This yields the following differential inequality of $Q^0$ on the whole time interval:
\[\dot{Q}^0(t) \le D(\Omega) - \kappa \frac{1}{ \sum_{j=1}^{N_0-1} (\eta^j A(2N_0,j)) + 1} \frac{\sin \gamma}{\gamma} Q^0(t), \quad t \in [0,+\infty).\]
Thus, we complete the proof of this Lemma. 
\qed
\newline

\section{proof of step 1 in lemma \ref{L4.3}}\label{appendix 2}
\setcounter{equation}{0}
We will show the detailed proof of Step 1 in Lemma \ref{L4.3}. Now we pick out any interval $J_l$ with $1\le l \le r$, where the orders of both $\{\bar{\theta}_i\}_{i=0}^{k+1}$ and $\{\underline{\theta}_i\}_{i=0}^{k+1}$ are preseved and the order of oscillators in each subdigraph $\mathcal{G}_i$ with $0\le i \le k+1$ will not change in each time interval. Then, we consider four cases according to the possibility of relative position between $\bigcup_{i=0}^k \mathcal{G}$ and $\mathcal{G}_{k+1}$. 

\subsection{Case 1} Consider the case that
\begin{equation*}
\max_{0 \le i\le k+1}\{\bar{\theta}_i\}  = \max_{0 \le i\le k}\{\bar{\theta}_i\}, \quad \min_{0\le i\le k+1}\{\underline{\theta}_i\} = \min_{0\le i\le k}\{\underline{\theta}_i\} \quad \mbox{in} \ J_l. 
\end{equation*}
\begin{figure}[h]
\centering
\includegraphics[width=0.5\textwidth]{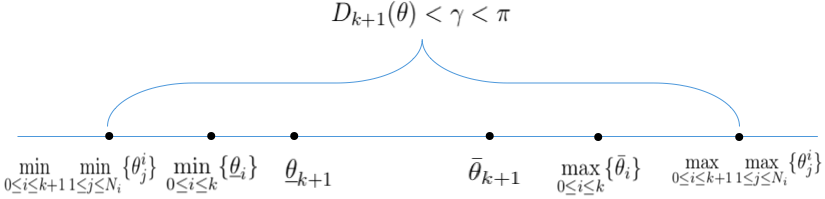}
\caption{The comparison relation in Case 1}
\label{Fig1}
\end{figure}
The comparison relation in this case is showed in Figure \ref{Fig1}.
In this case, $Q^{k+1}(t) = Q^k(t)$, by the assumption of induction principle and from \eqref{F-1}, we obviously have
\begin{equation*}
\begin{aligned}
\frac{d}{dt} Q^{k+1}(t) &= \frac{d}{dt} Q^k(t), \qquad \qquad t \in J_l, \\
&\le D(\Omega) - \frac{\kappa}{\sum_{j=1}^{N-1} (\eta^jA(2N,j)) +1}\frac{\sin \gamma}{\gamma} Q^{k}(t) + \kappa(2N+1)D_{k-1}(\theta(t))\\
&\le D(\Omega) - \frac{\kappa}{\sum_{j=1}^{N-1} (\eta^jA(2N,j)) +1}\frac{\sin \gamma}{\gamma} Q^{k+1}(t) + \kappa(2N+1)D_{k}(\theta(t)),
\end{aligned}
\end{equation*}
where we use $D_{k-1}(\theta(t)) \le D_{k}(\theta(t))$. Thus we obtain the dynamics for $Q^{k+1}(t)$ in \eqref{F-1} on $J_l$.

\subsection{Case 2} Consider the case that
\begin{equation*}
\max_{0 \le i\le k+1}\{\bar{\theta}_i\}  = \bar{\theta}_{k+1} , \quad \min_{0\le i\le k+1}\{\underline{\theta}_i\} =  \underline{\theta}_{k+1} \quad \mbox{in} \ J_l.
\end{equation*}
\begin{figure}[h]
\centering
\includegraphics[width=0.5\textwidth]{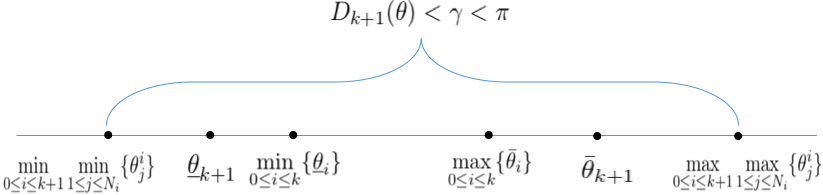}
\caption{The comparison relation in Case 2}
\label{Fig2}
\end{figure}
The comparison relation in this case is presented in Figure \ref{Fig2}.
For this case, we assume that
\begin{equation*}
\theta^{k+1}_1 \le \theta^{k+1}_2 \le \dots \le \theta^{k+1}_{N_{k+1}}, \quad \mbox{on} \ J_l.
\end{equation*}

\noindent $\bigstar$ \textbf{Step 1.} Similar to formula \eqref{C-5}, we claim that for $1 \le n \le N_{k+1}$, the following inequalities hold
\begin{equation}\label{F-5}
\begin{aligned}
\frac{d}{dt} \bar{\theta}^{k+1}_n(t) \le & \ \Omega_M + \kappa S_k D_k(\theta(t))\\
&+  \kappa \frac{1}{\bar{a}^{k+1}_{n}+1} \sum_{i=n}^{N_{k+1}}\left( \eta^{i-n} \underset{j \le i}{\min_{j\in \mathcal{N}_{i}^{k+1}(k+1)}} \sin (\theta^{k+1}_j(t) - \theta^{k+1}_{i}(t))\right) ,
\end{aligned}
\end{equation}
where $S_k = \sum_{i=0}^k N_i$. In the subsequence, we will prove the claim \eqref{F-5} by induction.\newline

\noindent $\bigstar$ \textbf{Step 1.1.} As an initial step, we first verify that \eqref{F-5} holds for $n = N_{k+1}$. In fact, the dynamics of $\bar{\theta}^{k+1}_{N_{k+1}}$ is given by
\begin{equation}\label{F-d6}
\begin{aligned}
\frac{d}{dt} \bar{\theta}^{k+1}_{N_{k+1}} = \frac{d}{dt} \theta^{k+1}  _{N_{k+1}} =& \ \Omega^{k+1}_{N_{k+1}} + \underbrace{\kappa \sum_{j\in \mathcal{N}_{N_{k+1}}^{k+1}(k+1)} \sin (\theta^{k+1}_j - \theta^{k+1}_{N_{k+1}})}_{\mathcal{I}_1} \\
& + \underbrace{\kappa \sum_{l=0}^k \sum_{j\in \mathcal{N}_{N_{k+1}}^{k+1}(l)} \sin (\theta^{l}_j - \theta^{k+1}_{N_{k+1}})}_{\mathcal{I}_2}.
\end{aligned}
\end{equation}
\noindent $\diamond$ \textbf{Estimates on $\mathbf{\mathcal{I}_1}$ in \eqref{F-d6}.} We know that $\theta^{k+1}  _{N_{k+1}}$ is the largest phase among $\mathcal{G}_{k+1}$, and all the oscillators in $\bigcup_{i=0}^{k+1} \mathcal{G}_{i}$ stay in half circle before $T^*$. Therefore, it is clear that
\begin{equation*}
\sin (\theta^{k+1}_j - \theta^{k+1}_{N_{k+1}}) \le 0, \quad \mbox{for} \ j \in \mathcal{N}_{N_{k+1}}^{k+1}(k+1).
\end{equation*}
Then we have 
\begin{equation}\label{F-6}
\sum_{j\in \mathcal{N}_{N_{k+1}}^{k+1}(k+1)} \sin (\theta^{k+1}_j - \theta^{k+1}_{N_{k+1}}) \le \min_{j \in \mathcal{N}^{k+1}_{N_{k+1}}(k+1)} \sin (\theta^{k+1}_j - \theta^{k+1}_{N_{k+1}}).
\end{equation}

\noindent $\diamond$ \textbf{Estimates on $\mathbf{\mathcal{I}_2}$ in \eqref{F-d6}.} For $\theta^l_j$ which is the neighbor of $\theta^{k+1}_{N_{k+1}}$ in $\mathcal{G}_l$ with $0 \le l \le k$, i.e., $j\in \mathcal{N}_{N_{k+1}}^{k+1}(l)$, there are two possible orderings between  $\theta^l_j$ and $\theta^{k+1}_{N_{k+1}}$:

\noindent If $\theta^l_j \le \theta^{k+1}_{N_{k+1}}$, we immediately have
\begin{equation*}
\sin (\theta^{l}_j - \theta^{k+1}_{N_{k+1}}) \le 0.
\end{equation*}
\noindent If $\theta^l_j > \theta^{k+1}_{N_{k+1}}$, according to the fact that
\begin{equation}\label{F-d7}
 \theta^{i}_{N_i} \ge \bar{\theta}_i \ge \underline{\theta}_i \ge \theta^i_1, \quad 0 \le i \le d,
 \end{equation}
we immediately obtain 
\begin{equation}\label{F-7}
\theta^{k+1}_{N_{k+1}} \ge \bar{\theta}_{k+1}  = \max_{0 \le i\le k+1}\{\bar{\theta}_i\}  \ge \max_{0 \le i\le k}\{\bar{\theta}_i\} \ge \min_{0 \le i\le k}\{\underline{\theta}_i\} \ge \min_{0 \le i \le k}\min_{1 \le j \le N_i} \{\theta^i_j\}.
\end{equation}
Thus we use the property of $\sin x  \le x, \ x \ge 0$ and \eqref{F-7} to get
\begin{equation}\label{F-d8}
\sin (\theta^{l}_j - \theta^{k+1}_{N_{k+1}}) \le \theta^{l}_j - \theta^{k+1}_{N_{k+1}} \le \theta^{l}_j -  \min_{0 \le i \le k}\min_{1 \le j \le N_i} \{\theta^i_j\} \le D_k(\theta(t)).
\end{equation}
Therefore, combining the above discussion, \eqref{F-d6} and \eqref{F-6}, we obtain that
\begin{equation*}
\frac{d}{dt} \bar{\theta}^{k+1}_{N_{k+1}} \le \Omega_M + \kappa \min_{j \in \mathcal{N}^{k+1}_{N_{k+1}}(k+1)} \sin (\theta^{k+1}_j - \theta^{k+1}_{N_{k+1}}) + \kappa S_k D_k(\theta(t)).
\end{equation*}
Thus we have that \eqref{F-5} holds for $n = N_{k+1}$.\newline

\noindent $\bigstar$ \textbf{Step 1.2.} Next, we will apply inductive criteria. We assume that \eqref{F-5} holds for $n$ with $2 \le n \le N_{k+1}$, and we  will show that \eqref{F-5} holds for $n-1$. According to the process $\mathcal{A}_1$, we have

\begin{equation}\label{F-8}
\begin{aligned}
\dot{\bar{\theta}}^{k+1}_{n-1} &= \frac{d}{dt}\left( \frac{\bar{a}^{k+1}_{n-1} \bar{\theta}^{k+1}_{n} + \theta^{k+1}_{n-1}}{\bar{a}^{k+1}_{n-1}+1}\right) = \frac{\bar{a}^{k+1}_{n-1}}{\bar{a}^{k+1}_{n-1}+1} \dot{\bar{\theta}}^{k+1}_{n} + \frac{1}{\bar{a}^{k+1}_{n-1}+1} \dot{\theta}^{k+1}_{n-1} \\
&\le  \frac{\bar{a}^{k+1}_{n-1}}{\bar{a}^{k+1}_{n-1}+1} \left[\Omega_M +  \kappa \frac{1}{\bar{a}^{k+1}_{n}+1} \sum_{i=n}^{N_{k+1}}\left( \eta^{i-n} \underset{j \le i}{\min_{j\in \mathcal{N}_{i}^{k+1}(k+1)}} \sin (\theta^{k+1}_j(t) - \theta^{k+1}_{i}(t))\right) \right] \\
&+ \frac{\bar{a}^{k+1}_{n-1}}{\bar{a}^{k+1}_{n-1}+1}  \kappa S_k D_k(\theta(t)) \\
& + \frac{1}{\bar{a}^{k+1}_{n-1}+1} \left( \Omega^{k+1}_{n-1} + \kappa \sum_{j\in \mathcal{N}_{n-1}^{k+1}(k+1)}\sin (\theta^{k+1}_j - \theta^{k+1}_{n-1})  + \kappa \sum_{l=0}^k \sum_{j\in \mathcal{N}_{n-1}^{k+1}(l)} \sin (\theta^{l}_j - \theta^{k+1}_{n-1})
\right) \\
&\le \Omega_M + \underbrace{\kappa \frac{\eta(2N - N_{k+1} - S_k)}{\bar{a}^{k+1}_{n-1}+1} \sum_{i=n}^{N_{k+1}}\left( \eta^{i-n} \underset{j \le i}{\min_{j\in \mathcal{N}_{i}^{k+1}(k+1)}} \sin (\theta^{k+1}_j(t) - \theta^{k+1}_{i}(t))\right)}_{\mathcal{I}_{11}}\\
&+ \kappa \frac{\eta(N_{k+1} -n+2+S_k)}{\bar{a}^{k+1}_{n-1}+1} \sum_{i=n}^{N_{k+1}}\left( \eta^{i-n} \underset{j \le i}{\min_{j\in \mathcal{N}_{i}^{k+1}(k+1)}} \sin (\theta^{k+1}_j(t) - \theta^{k+1}_{i}(t))\right) \\
&+ \kappa\frac{1}{\bar{a}^{k+1}_{n-1}+1} \left( \underbrace{\underset{j \le n-1}{\sum_{j\in \mathcal{N}_{n-1}^{k+1}(k+1)}}\sin (\theta^{k+1}_j - \theta^{k+1}_{n-1})}_{\mathcal{I}_{12}} + \underbrace{\underset{j > n-1}{\sum_{j\in \mathcal{N}_{n-1}^{k+1}(k+1)}}\sin (\theta^{k+1}_j - \theta^{k+1}_{n-1})}_{\mathcal{I}_{13}}\right) \\
&+ \kappa\frac{1}{\bar{a}^{k+1}_{n-1}+1} \underbrace{ \sum_{l=0}^k \sum_{j\in \mathcal{N}_{n-1}^{k+1}(l)} \sin (\theta^{l}_j - \theta^{k+1}_{n-1})}_{\mathcal{I}_{14}} + \frac{\bar{a}^{k+1}_{n-1}}{\bar{a}^{k+1}_{n-1}+1}  \kappa S_k D_k(\theta(t)),
\end{aligned}
\end{equation}
where we use the fact
\begin{equation*}
\bar{a}^{k+1}_{n-1} = \eta(2N - n + 2)(\bar{a}^{k+1}_{n} +1 ), \quad 2N -n+2 = (N_{k+1} -n+2+S_k) + 2N - N_{k+1} - S_k.
\end{equation*}
$\ $

\noindent $\diamond$ \textbf{Estimates on $\mathbf{\mathcal{I}_{11}}$ in \eqref{F-8}.} We apply the strong connectivity of $\mathcal{G}_{k+1}$ and Lemma \ref{eta_sin_inequality} to obtain that
\begin{equation}\label{F-9}
\sum_{i=n}^{N_{k+1}}\left( \eta^{i-n} \underset{j \le i}{\min_{j\in \mathcal{N}_{i}^{k+1}(k+1)}} \sin (\theta^{k+1}_j(t) - \theta^{k+1}_{i}(t))\right) \le \sin(\theta^{k+1}_{\bar{k}_{n}} - \theta^{k+1}_{N_{k+1}}), 
\end{equation}
where $\bar{k}_{n} = \min_{j \in \bigcup_{i=n}^{N_{k+1}} \mathcal{N}^{k+1}_{i}(k+1)} j \le n-1$.
And it is obvious that $\mathcal{I}_{11} \le 0$.\newline

\noindent $\diamond$ \textbf{Estimates on $\mathbf{\mathcal{I}_{12}}$ in \eqref{F-8}.} For the term $\mathcal{I}_{12}$, we apply direct calculation to obtain that
\begin{equation}\label{F-d10}
\mathcal{I}_{12} = \underset{j \le n-1}{\sum_{j\in \mathcal{N}_{n-1}^{k+1}(k+1)}}\sin (\theta^{k+1}_j - \theta^{k+1}_{n-1}) \le \underset{j \le n-1}{\min_{j\in \mathcal{N}_{n-1}^{k+1}(k+1)}} \sin (\theta^{k+1}_j - \theta^{k+1}_{n-1}).
\end{equation}

\noindent $\diamond$ \textbf{Estimates on $\mathbf{\mathcal{I}_{13}}$ in \eqref{F-8}.} For the term $\mathcal{I}_{13}$, the estimate is almost the same as \eqref{C-8}. Without loss of generality, we only deal with $\mathcal{I}_{13}$ under the situation $\gamma > \frac{\pi}{2}$. According to \eqref{F-9}, we consider two cases depending on comparison between $\theta^{k+1}_{N_{k+1}} - \theta^{k+1}_{\bar{k}_{n}}$ and $\frac{\pi}{2}$. \newline

\noindent \textbf{(i)} For the first case that $ 0 \le \theta^{k+1}_{N_{k+1}} - \theta^{k+1}_{\bar{k}_{n}} \le \frac{\pi}{2}$, we immediately obtain that for $j \in \mathcal{N}^{k+1}_{n-1}(k+1), \ j>n-1$,
\begin{equation}\label{F-10}
0 \le \theta^{k+1}_{j}(t) - \theta^{k+1}_{n-1}(t) \le \theta^{k+1}_{N_{k+1}}(t) - \theta^{k+1}_{n-1}(t) \le \theta^{k+1}_{N_{k+1}}(t) - \theta^{k+1}_{\bar{k}_{n}}(t) \le \frac{\pi}{2}.
\end{equation}
Then it is known from \eqref{F-9}, \eqref{F-10} and $\eta > 2$ that
\begin{equation*}
\begin{aligned}
&\eta(N_{k+1} -  n + 1) \sum_{i=n}^{N_{k+1}}\left( \eta^{i-n} \underset{j \le i}{\min_{j\in \mathcal{N}_{i}^{k+1}(k+1)}} \sin (\theta^{k+1}_j(t) - \theta^{k+1}_{i}(t))\right) + \mathcal{I}_{13} \\
& \le \eta(N_{k+1} -  n + 1) \sin(\theta^{k+1}_{\bar{k}_{n}} - \theta^{k+1}_{N_{k+1}}) + \underset{j > n-1}{\sum_{j\in \mathcal{N}_{n-1}^{k+1}(k+1)}}\sin (\theta^{k+1}_j - \theta^{k+1}_{n-1}) \\
& \le (N_{k+1} -  n + 1) \sin(\theta^{k+1}_{\bar{k}_{n}} - \theta^{k+1}_{N_{k+1}}) + (N_{k+1} -  n + 1) \sin (\theta^{k+1}_{N_{k+1}} - \theta^{k+1}_{n-1}) \\
&\le 0.
\end{aligned}
\end{equation*}

\noindent \textbf{(ii)} For the second case that $ \frac{\pi}{2} < \theta^{k+1}_{N_{k+1}} - \theta^{k+1}_{\bar{k}_{n}} < \gamma$, it is known that
\begin{equation}\label{F-11}
\eta > \frac{1}{\sin \gamma} \quad \mbox{and} \quad \sin(\theta^{k+1}_{N_{k+1}} - \theta^{k+1}_{\bar{k}_{n}}) > \sin \gamma,
\end{equation}
which yields $\eta \sin(\theta^{k+1}_{\bar{k}_{n}} - \theta^{k+1}_{N_{k+1}} ) \le -1$. Thus we immediately derive that

\begin{equation*}
\begin{aligned}
&\eta(N_{k+1} -  n + 1) \sum_{i=n}^{N_{k+1}}\left( \eta^{i-n} \underset{j \le i}{\min_{j\in \mathcal{N}_{i}^{k+1}(k+1)}} \sin (\theta^{k+1}_j(t) - \theta^{k+1}_{i}(t))\right) + \mathcal{I}_{13} \\
& \le \eta(N_{k+1} -  n + 1) \sin(\theta^{k+1}_{\bar{k}_{n}} - \theta^{k+1}_{N_{k+1}}) + \underset{j > n-1}{\sum_{j\in \mathcal{N}_{n-1}^{k+1}(k+1)}}\sin (\theta^{k+1}_j - \theta^{k+1}_{n-1}) \\
& \le -(N_{k+1} -  n + 1) +(N_{k+1} -  n + 1) = 0.
\end{aligned}
\end{equation*}
Therefore, we combine the above arguments in (i) and (ii) to obtain
\begin{equation}\label{F-12}
\eta(N_{k+1} -  n + 1) \sum_{i=n}^{N_{k+1}}\left( \eta^{i-n} \underset{j \le i}{\min_{j\in \mathcal{N}_{i}^{k+1}(k+1)}} \sin (\theta^{k+1}_j - \theta^{k+1}_{i})\right) + \mathcal{I}_{13} \le 0.
\end{equation}

\noindent $\diamond$ \textbf{Estimates on $\mathbf{\mathcal{I}_{14}}$ in \eqref{F-8}.} For the term $\mathcal{I}_{14}$, there are three possible comparison between  $\theta^l_j$ with $0 \le l \le k$ and $\theta^{k+1}_{n-1}$:\newline

\noindent \textbf{(i)} If $\theta^l_j \le \theta^{k+1}_{n-1}$, we immediately have $\sin (\theta^{l}_j - \theta^{k+1}_{n-1}) \le 0$.\newline

\noindent \textbf{(ii)} If $ \theta^{k+1}_{n-1} < \theta^l_j \le \theta^{k+1}_{N_{k+1}}$, we consider two cases separately:\newline

\noindent (a) For the case that $ 0 \le \theta^{k+1}_{N_{k+1}} - \theta^{k+1}_{\bar{k}_{n}} \le \frac{\pi}{2}$, it is clear that
\[0 \le \theta^l_j - \theta^{k+1}_{n-1} \le \theta^{k+1}_{N_{k+1}} - \theta^{k+1}_{n-1}\le \theta^{k+1}_{N_{k+1}} - \theta^{k+1}_{\bar{k}_{n}} \le \frac{\pi}{2}.\]
Thus from the above inequality and \eqref{F-9}, we have
\begin{equation*}
\begin{aligned}
&\eta \sum_{i=n}^{N_{k+1}}\left( \eta^{i-n} \underset{j \le i}{\min_{j\in \mathcal{N}_{i}^{k+1}(k+1)}} \sin (\theta^{k+1}_j(t) - \theta^{k+1}_{i}(t))\right) + \sin (\theta^l_j - \theta^{k+1}_{n-1}) \\
&\le \eta \sin(\theta^{k+1}_{\bar{k}_{n}} - \theta^{k+1}_{N_{k+1}}) + \sin (\theta^l_j - \theta^{k+1}_{n-1}) \\
&\le \sin(\theta^{k+1}_{\bar{k}_{n}} - \theta^{k+1}_{N_{k+1}}) + \sin(\theta^{k+1}_{N_{k+1}} - \theta^{k+1}_{\bar{k}_{n}}) = 0.
\end{aligned} 
\end{equation*}
\noindent (b) For another case that $ \frac{\pi}{2} < \theta^{k+1}_{N_{k+1}} - \theta^{k+1}_{\bar{k}_{n}} < \gamma$, it is known from \eqref{F-11} that
\begin{equation*}
\begin{aligned}
&\eta \sum_{i=n}^{N_{k+1}}\left( \eta^{i-n} \underset{j \le i}{\min_{j\in \mathcal{N}_{i}^{k+1}(k+1)}} \sin (\theta^{k+1}_j(t) - \theta^{k+1}_{i}(t))\right) + \sin (\theta^l_j - \theta^{k+1}_{n-1}) \\
&\le \eta \sin(\theta^{k+1}_{\bar{k}_{n}} - \theta^{k+1}_{N_{k+1}}) + \sin (\theta^l_j - \theta^{k+1}_{n-1}) \\
&\le -1 + 1 = 0
\end{aligned} 
\end{equation*}
Hence, combining the above arguments in (a) and (b), we obtain that
\[\eta \sum_{i=n}^{N_{k+1}}\left( \eta^{i-n} \underset{j \le i}{\min_{j\in \mathcal{N}_{i}^{k+1}(k+1)}} \sin (\theta^{k+1}_j(t) - \theta^{k+1}_{i}(t))\right) + \sin (\theta^l_j - \theta^{k+1}_{n-1}) \le 0.\]
$\ $

\noindent \textbf{(iii)} If $\theta^l_j > \theta^{k+1}_{N_{k+1}}$, we exploit the concave property of sine function in $[0, \pi]$ to get
\begin{equation}\label{F-13}
\sin (\theta^l_j - \theta^{k+1}_{n-1}) \le \sin (\theta^l_j - \theta^{k+1}_{N_{k+1}}) + \sin (\theta^{k+1}_{N_{k+1}} - \theta^{k+1}_{n-1}).
\end{equation}
For the second part on the right-hand side of above inequality \eqref{F-13}, we apply the same analysis in (ii) to obtain
\[\eta \sum_{i=n}^{N_{k+1}}\left( \eta^{i-n} \underset{j \le i}{\min_{j\in \mathcal{N}_{i}^{k+1}(k+1)}} \sin (\theta^{k+1}_j - \theta^{k+1}_{i})\right) + \sin (\theta^{k+1}_{N_{k+1}} - \theta^{k+1}_{n-1}) \le 0.\]
For the first part on the right-hand side of \eqref{F-13}, the calculation is the same as the formula \eqref{F-d8}, thus we have
\begin{equation*}
\sin (\theta^{l}_j - \theta^{k+1}_{N_{k+1}}) \le \theta^{l}_j - \theta^{k+1}_{N_{k+1}} \le \theta^{l}_j -  \min_{0 \le i \le k}\min_{1 \le j \le N_i} \{\theta^i_j\} \le D_k(\theta(t)).
\end{equation*}
Therefore, we combine the above estimates to obtain
\begin{equation}\label{F-14}
\begin{aligned}
&\eta S_k \sum_{i=n}^{N_{k+1}}\left( \eta^{i-n} \underset{j \le i}{\min_{j\in \mathcal{N}_{i}^{k+1}(k+1)}} \sin (\theta^{k+1}_j(t) - \theta^{k+1}_{i}(t))\right) + \mathcal{I}_{14} \\
& \le \eta S_k \sin(\theta^{k+1}_{\bar{k}_{n}} - \theta^{k+1}_{N_{k+1}}) + \sum_{l=0}^k \sum_{j\in \mathcal{N}_{n-1}^{k+1}(l)} \sin (\theta^{l}_j - \theta^{k+1}_{n-1}) \\
& \le S_k D_k(\theta(t)).
\end{aligned}
\end{equation}
Then combining \eqref{F-d10}, \eqref{F-12}, \eqref{F-14} and coming back to \eqref{F-8}, we obtain that
\begin{equation*}
\begin{aligned}
\frac{d}{dt} \bar{\theta}^{k+1}_{n-1} &\le \Omega_M + \kappa \frac{\eta}{\bar{a}^{k+1}_{n-1}+1} \sum_{i=n}^{N_{k+1}}\left( \eta^{i-n} \underset{j \le i}{\min_{j\in \mathcal{N}_{i}^{k+1}(k+1)}} \sin (\theta^{k+1}_j - \theta^{k+1}_{i})\right)  \\
&+ \kappa\frac{1}{\bar{a}^{k+1}_{n-1}+1} \underset{j \le n-1}{\min_{j\in \mathcal{N}_{n-1}^{k+1}(k+1)}} \sin (\theta^{k+1}_j - \theta^{k+1}_{n-1}) \\
&+ \frac{\bar{a}^{k+1}_{n-1}}{\bar{a}^{k+1}_{n-1}+1}  \kappa S_k D_k(\theta(t)) + \frac{1}{\bar{a}^{k+1}_{n-1}+1}  \kappa S_k D_k(\theta(t))\\
&=\Omega_M + \kappa \frac{1}{\bar{a}^{k+1}_{n-1}+1} \sum_{i=n-1}^{N_{k+1}}\left( \eta^{i-(n-1)} \underset{j \le i}{\min_{j\in \mathcal{N}_{i}^{k+1}(k+1)}} \sin (\theta^{k+1}_j - \theta^{k+1}_{i})\right) + \kappa S_k D_k(\theta(t)).
\end{aligned}
\end{equation*}
This means that the claim \eqref{F-5} does hold for $n-1$. Therefore, we apply the inductive criteria to complete the proof of the claim \eqref{F-5}.\newline 

\noindent $\bigstar$ \textbf{Step 2.} Now we are ready to prove \eqref{F-1} on $J_l$ for Case 2. In fact, we apply Lemma \ref{eta_sin_inequality} and the strong connectivity of $\mathcal{G}_{k+1}$ to have
\begin{equation*}
\sum_{i=1}^{N_{k+1}}\left( \eta^{i-1} \underset{j \le i}{\min_{j\in \mathcal{N}_{i}^{k+1}(k+1)}} \sin (\theta^{k+1}_j - \theta^{k+1}_{i})\right) \le \sin(\theta^{k+1}_1 - \theta^{k+1}_{N_{k+1}})
\end{equation*}
From the notations in \eqref{abbreviation_k} and \eqref{bar_underline_k}, it is known that
\begin{equation*}
\bar{\theta}_1^{k+1} = \bar{\theta}_{k+1}, \quad \underline{\theta}_{N_{k+1}}^{k+1} = \underline{\theta}_{k+1}.
\end{equation*}
Thus, we exploit the above inequality and set $n=1$ in \eqref{F-5} to obtain
\begin{equation}\label{F-15}
\begin{aligned}
\frac{d}{dt} \bar{\theta}_{k+1} &=\frac{d}{dt} \bar{\theta}^{k+1}_{1} \\
&\le \Omega_M + \kappa \frac{1}{\bar{a}^{k+1}_{1}+1} \sum_{i=1}^{N_{k+1}}\left( \eta^{i-1} \underset{j \le i}{\min_{j\in \mathcal{N}_{i}^{k+1}(k+1)}} \sin (\theta^{k+1}_j - \theta^{k+1}_{i})\right) + \kappa S_k D_k(\theta(t))\\
&\le \Omega_M + \kappa S_k D_k(\theta(t)) + \kappa \frac{1}{\bar{a}^{k+1}_{1}+1} \sin(\theta^{k+1}_1 - \theta^{k+1}_{N_{k+1}})
\end{aligned}
\end{equation}
We further apply the similar arguments in obtaining the dynamics of $\bar{\theta}_{k+1}$ in \eqref{F-15} to derive the differential inequality of $\underline{\theta}_{k+1}$ as below
\begin{equation}\label{F-16}
\frac{d}{dt} \underline{\theta}_{k+1} \ge \Omega_m + \kappa \frac{1}{\bar{a}^{k+1}_{1}+1} \sin(\theta^{k+1}_{N_{k+1}} - \theta^{k+1}_1) - \kappa S_k D_k(\theta(t)).
\end{equation}
Due to the monotone decreasing property of $\frac{\sin x}{x}$ in $(0, \pi]$ and from \eqref{F-4}, it is obvious that
\[\sin (\theta^{k+1}_{N_{k+1}} - \theta^{k+1}_1) \ge \frac{\sin \gamma}{\gamma}(\theta^{k+1}_{N_{k+1}} - \theta^{k+1}_{1}).\]
Then we combine the above inequality, \eqref{F-15}, \eqref{F-16} and \eqref{a^k_1-size} to get
\begin{equation*}
\begin{aligned}
\dot{Q}^{k+1}(t) &= \frac{d}{dt} (\bar{\theta}_{k+1} - \underline{\theta}_{k+1}) \le D(\Omega) - \kappa \frac{2}{\bar{a}^{k+1}_{1}+1} \sin(\theta^{k+1}_{N_{k+1}} - \theta^{k+1}_1) + 2\kappa S_k D_k(\theta(t))\\
&\le D(\Omega) - \kappa \frac{1}{\bar{a}^{k+1}_{1}+1} \frac{\sin \gamma}{\gamma}(\theta^{k+1}_{N_{k+1}} - \theta^{k+1}_1) + 2\kappa S_k D_k(\theta(t)) \\
&\le D(\Omega) - \kappa \frac{1}{\bar{a}^{k+1}_{1}+1} \frac{\sin \gamma}{\gamma} Q^{k+1}(t)+ 2\kappa S_k D_k(\theta(t)) \\
&\le D(\Omega) - \kappa \frac{1}{\sum_{j=1}^{N-1} (\eta^j A(2N,j)) + 1} \frac{\sin \gamma}{\gamma} Q^{k+1}(t)+ \kappa (2N+1) D_k(\theta(t)), \quad t\in J_l,
\end{aligned}
\end{equation*}
where we use the fact that $Q^{k+1}(t) \le \theta^{k+1}_{N_{k+1}}(t) - \theta^{k+1}_1(t)$. Thus we obtain the dynamics for $Q^{k+1}(t)$ in \eqref{F-1} on $J_l$.\newline

\subsection{Case 3} Consider the case that
\begin{equation*}
\max_{0 \le i \le k+1}\{\bar{\theta}_i\} = \bar{\theta}_{k+1},\quad \min_{0 \le i \le k+1}\{\underline{\theta}_i\} = \min_{0 \le i \le k}\{\underline{\theta}_i\} \quad \mbox{on} \ J_l.
\end{equation*}
\begin{figure}[h]
\centering
\includegraphics[width=0.5\textwidth]{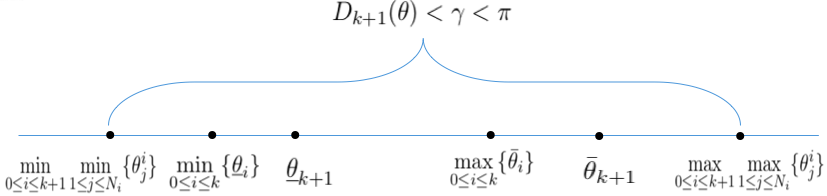}
\caption{The comparison relation in Case 3}
\label{Fig3}
\end{figure}
The comparison relation in this case is presented in Figure \ref{Fig3}.
For this case, without loss of generality, we set
\begin{equation*}
\underline{\theta}_q = \min_{0 \le i \le k}\{\underline{\theta}_i\} \quad \mbox{where} \ 0\le q \le k. 
\end{equation*}
We further assume
\begin{equation*}
\theta^{k+1}_1 \le \theta^{k+1}_2 \le \ldots \le \theta^{k+1}_{N_{k+1}}, \quad \theta^{q}_1 \le \theta^{q}_2 \le \ldots \le \theta^{q}_{N_{q}} \quad \mbox{in} \ J_l.
\end{equation*}
It is obvious that $\underline{\theta}_q = \min_{0 \le i \le q}\{\underline{\theta}_i\}$. Thus we apply the same arguments in Case 2 to obtain
\begin{equation}\label{F-17}
\frac{d}{dt} \underline{\theta}_{q} \ge \Omega_m + \kappa \frac{1}{\bar{a}^{q}_{1}+1} \sin(\theta^{q}_{N_{q}} - \theta^{q}_1) - \kappa S_{q-1} D_{q-1}(\theta(t)).
\end{equation}
In the subsequence, we prove \eqref{F-1} on $J_l$ in two sub-cases depending on the comparison between $\theta^{k+1}_1 $ and $\max\limits_{0 \le i \le k}\max\limits_{1 \le j \le N_i}\{\theta^i_j\}$. \newline

\noindent $\bullet$ \textbf{Case 3.1.} If $\theta^{k+1}_1  \le \max\limits_{0 \le i \le k}\max\limits_{1 \le j \le N_i}\{\theta^i_j\}$, then we combine \eqref{F-4}, \eqref{F-15} and \eqref{F-17} to get
\begin{equation*}
\begin{aligned}
\dot{Q}^{k+1}(t) &= \frac{d}{dt} (\bar{\theta}_{k+1} - \underline{\theta}_q) \\
&\le D(\Omega) + \kappa S_k D_k(\theta(t)) - \kappa \frac{1}{\bar{a}^{k+1}_{1}+1} \sin(\theta^{k+1}_{N_{k+1}} - \theta^{k+1}_1) \\
&- \kappa \frac{1}{\bar{a}^{q}_{1}+1} \sin(\theta^{q}_{N_{q}} - \theta^{q}_1) + \kappa S_{q-1} D_{q-1}(\theta(t))\\
&\le D(\Omega) - \kappa\min \left\{\frac{1}{\bar{a}^{k+1}_{1}+1}, \frac{1}{\bar{a}^{q}_{1}+1}\right\} \left( \sin(\theta^{k+1}_{N_{k+1}} - \theta^{k+1}_1) + \sin(\theta^{q}_{N_{q}} - \theta^{q}_1)\right) \\
&+ \kappa (S_k + S_{q-1}) D_k(\theta(t)) \\
& \le D(\Omega) - \kappa\min \left\{\frac{1}{\bar{a}^{k+1}_{1}+1}, \frac{1}{\bar{a}^{q}_{1}+1}\right\} \frac{\sin \gamma}{\gamma} \left(\theta^{k+1}_{N_{k+1}} - \theta^{k+1}_1 + \theta^{q}_{N_{q}} - \theta^{q}_1\right) \\
&+ \kappa (S_k + S_{q-1}) D_k(\theta(t))- \kappa\min \left\{\frac{1}{\bar{a}^{k+1}_{1}+1}, \frac{1}{\bar{a}^{q}_{1}+1}\right\} \frac{\sin \gamma}{\gamma}  \left( \max\limits_{0 \le i \le k}\max\limits_{1 \le j \le N_i}\{\theta^i_j\} - \theta^q_{N_q} \right)\\
&+ \kappa\min \left\{\frac{1}{\bar{a}^{k+1}_{1}+1}, \frac{1}{\bar{a}^{q}_{1}+1}\right\} \frac{\sin \gamma}{\gamma}  \left( \max\limits_{0 \le i \le k}\max\limits_{1 \le j \le N_i}\{\theta^i_j\} - \theta^q_{N_q} \right) \\
&\le D(\Omega) - \kappa\min \left\{\frac{1}{\bar{a}^{k+1}_{1}+1}, \frac{1}{\bar{a}^{q}_{1}+1}\right\} \frac{\sin \gamma}{\gamma} \left( \theta^{k+1}_{N_{k+1}} - \theta^q_1\right) + \kappa (S_k + S_{q-1}) D_k(\theta(t)) \\
&+ \kappa\min \left\{\frac{1}{\bar{a}^{k+1}_{1}+1}, \frac{1}{\bar{a}^{q}_{1}+1}\right\} \frac{\sin \gamma}{\gamma} D_k(\theta(t)) \\
&\le D(\Omega) - \kappa \frac{1}{\sum_{j=1}^{N-1} (\eta^j A(2N,j)) + 1} \frac{\sin \gamma}{\gamma} Q^{k+1}(t)+ \kappa (2N+1) D_k(\theta(t)), \qquad \mbox{in} \ J_l.
\end{aligned}
\end{equation*}
In above estimates, we apply \eqref{a^k_1-size}, \eqref{F-d7}  and the fact that
\begin{equation*}
Q^{k+1}(t) = \bar{\theta}_{k+1} - \underline{\theta}_q \le \theta^{k+1}_{N_{k+1}} - \theta^q_1 \quad \mbox{and} \quad \min \left\{\frac{1}{\bar{a}^{k+1}_{1}+1}, \frac{1}{\bar{a}^{q}_{1}+1}\right\} \frac{\sin \gamma}{\gamma} \le 1.
\end{equation*}

\noindent $\bullet$ \textbf{Case 3.2.} If $\theta^{k+1}_1  > \max\limits_{0 \le i \le k}\max\limits_{1 \le j \le N_i}\{\theta^i_j\}$, similar to Case 2, we can apply the induction principle to prove that for $1\le n \le N_{k+1}$,
\begin{equation}\label{F-18}
\begin{aligned}
\frac{d}{dt} \bar{\theta}^{k+1}_n &\le \Omega_M + \kappa \frac{1}{\bar{a}^{k+1}_{n}+1} \sum_{i=n}^{N_{k+1}}\left( \eta^{i-n} \underset{j \le i}{\min_{j\in \mathcal{N}_{i}^{k+1}(k+1)}} \sin (\theta^{k+1}_j - \theta^{k+1}_{i})\right) \\
&+\kappa \frac{1}{\bar{a}^{k+1}_{n}+1}\sum_{l=0}^k \sum_{j\in \mathcal{N}_{n}^{k+1}(l)} \sin (\theta^{l}_j - \theta^{k+1}_{n})\\
&+ \kappa \sum_{i=n+1}^{N_{k+1}} \left( \frac{\prod_{r=n}^{i-1} \bar{a}^{k+1}_r}{\prod_{r=n}^i (\bar{a}^{k+1}_r + 1)} \sum_{l=0}^k \sum_{j\in \mathcal{N}_{i}^{k+1}(l)} \sin (\theta^{l}_j - \theta^{k+1}_{i})\right).
\end{aligned}
\end{equation}
Since the proof of \eqref{F-18} is similar to that of \eqref{F-5}, we omit its details.
In particular, we set $n=1$ in the above inequality \eqref{F-18} and apply Lemma \ref{eta_sin_inequality} to get
\begin{equation}\label{F-a18}
\begin{aligned}
\frac{d}{dt} \bar{\theta}_{k+1}  = \frac{d}{dt} \bar{\theta}^{k+1}_1 &\le \Omega_M + \kappa \frac{1}{\bar{a}^{k+1}_{1}+1} \sin (\theta^{k+1}_1 - \theta^{k+1}_{N_{k+1}}) + \kappa \frac{1}{\bar{a}^{k+1}_{1}+1}\sum_{l=0}^k \sum_{j\in \mathcal{N}_{1}^{k+1}(l)} \sin (\theta^{l}_j - \theta^{k+1}_{1}) \\
&+ \kappa \sum_{i=2}^{N_{k+1}} \left( \frac{\prod_{r=1}^{i-1} \bar{a}^{k+1}_r}{\prod_{r=1}^i (\bar{a}^{k+1}_r + 1)} \sum_{l=0}^k \sum_{j\in \mathcal{N}_{i}^{k+1}(l)} \sin (\theta^{l}_j - \theta^{k+1}_{i})\right)
\end{aligned}
\end{equation}
Due to the situation that $\theta^{k+1}_1  > \max\limits_{0 \le i \le k}\max\limits_{1 \le j \le N_i}\{\theta^i_j\}$, it is known that for $0 \le l \le k$, the term $\sin (\theta^l_j - \theta^{k+1}_i)$ in \eqref{F-a18} is non-positive. And according to the spanning tree structure, the neighbors set of $\mathcal{G}_{k+1}$ in  $\bigcup_{l=0}^k\mathcal{G}_l$ is non-empty,
\begin{equation*}
\bigcup_{i=1}^{N_{k+1}} \bigcup_{l=0}^k  \mathcal{N}_i^{k+1}(l) \ne \emptyset,
\end{equation*}
this means that there must exist some $\theta^l_n$ belonging to $\bigcup_{l=0}^k\mathcal{G}_l$ and $\theta^{k+1}_m$ such that $\theta^l_n \in \mathcal{N}^{k+1}_m (l)$. Moreover, from \eqref{coeffi_k}, it is clear that for the coefficients in the last term in \eqref{F-a18} satisfy
\begin{equation*}
\frac{\prod_{r=1}^{i-1} \bar{a}^{k+1}_r}{\prod_{r=2}^i (\bar{a}^{k+1}_r + 1)} = \frac{\prod_{r=1}^{i-1} \bar{a}^{k+1}_r}{\prod_{r=1}^{i-1} (\bar{a}^{k+1}_{r+1} + 1)} = \prod_{r=1}^{i-1} \eta (2N- r+1) > 1 \quad \mbox{with} \  2 \le i \le N_{k+1}.
\end{equation*}
Then we combine the above estimates to have
\begin{equation}\label{F-19}
\begin{aligned}
\frac{d}{dt} \bar{\theta}_{k+1} &\le \Omega_M + \kappa \frac{1}{\bar{a}^{k+1}_{1}+1} \sin (\theta^{k+1}_1 - \theta^{k+1}_{N_{k+1}}) + \kappa \frac{1}{\bar{a}^{k+1}_{1}+1} \sum_{l=0}^k \sum_{j\in \mathcal{N}_{1}^{k+1}(l)} \sin (\theta^{l}_j - \theta^{k+1}_{1})\\
& +  \kappa \frac{1}{\bar{a}^{k+1}_{1}+1}\sum_{i=2}^{N_{k+1}} \left( \frac{\prod_{l=1}^{i-1} \bar{a}^{k+1}_l}{\prod_{l=2}^i (\bar{a}^{k+1}_l + 1)} \sum_{l=0}^k \sum_{j\in \mathcal{N}_{i}^{k+1}(l)} \sin (\theta^{l}_j - \theta^{k+1}_{i})\right)  \\
& \le \Omega_M + \kappa \frac{1}{\bar{a}^{k+1}_{1}+1} \sin (\theta^{k+1}_1 - \theta^{k+1}_{N_{k+1}}) + \kappa \frac{1}{\bar{a}^{k+1}_{1}+1} \sin (\theta^{l}_n - \theta^{k+1}_{m})\\
& \le \Omega_M - \kappa \frac{1}{\bar{a}^{k+1}_{1}+1} \frac{\sin \gamma}{\gamma} \left( \theta^{k+1}_{N_{k+1}} - \theta^{k+1}_1 \right )  - \kappa \frac{1}{\bar{a}^{k+1}_{1}+1} \frac{\sin \gamma}{\gamma} \left( \theta^{k+1}_{m}- \theta^{l}_n\right) \\
& \le \Omega_M - \kappa \frac{1}{\bar{a}^{k+1}_{1}+1} \frac{\sin \gamma}{\gamma} \left(\theta^{k+1}_{N_{k+1}} - \theta^{k+1}_1 + \theta^{k+1}_1 - \max\limits_{0 \le i \le k}\max\limits_{1 \le j \le N_i}\{\theta^i_j\} \right) \\
& = \Omega_M - \kappa \frac{1}{\bar{a}^{k+1}_{1}+1} \frac{\sin \gamma}{\gamma} \left(\theta^{k+1}_{N_{k+1}} - \max\limits_{0 \le i \le k}\max\limits_{1 \le j \le N_i}\{\theta^i_j\} \right),
\end{aligned}
\end{equation}
where we exploit the property
\[\theta^{k+1}_{m}- \theta^{l}_n \ge \theta^{k+1}_1 - \max\limits_{0 \le i \le k}\max\limits_{1 \le j \le N_i}\{\theta^i_j\}.\]
Then we combine \eqref{F-17} and \eqref{F-19} to obtain that
\begin{equation*}
\begin{aligned}
\dot{Q}^{k+1}(t) &\le D(\Omega) -\kappa \frac{1}{\bar{a}^{k+1}_{1}+1} \frac{\sin \gamma}{\gamma} \left(\theta^{k+1}_{N_{k+1}} - \max\limits_{0 \le i \le k}\max\limits_{1 \le j \le N_i}\{\theta^i_j\} \right) \\
&- \kappa \frac{1}{\bar{a}^{q}_{1}+1} \sin(\theta^{q}_{N_{q}} - \theta^{q}_1) + \kappa S_{q-1} D_{q-1}(\theta(t)) \\
&\le D(\Omega) - \kappa \min \left\{\frac{1}{\bar{a}^{k+1}_{1}+1}, \frac{1}{\bar{a}^{q}_{1}+1}\right\} \frac{\sin \gamma}{\gamma} \left(\theta^{k+1}_{N_{k+1}} - \max\limits_{0 \le i \le k}\max\limits_{1 \le j \le N_i}\{\theta^i_j\} + \theta^{q}_{N_{q}} - \theta^{q}_1\right) \\
&- \kappa \min \left\{\frac{1}{\bar{a}^{k+1}_{1}+1}, \frac{1}{\bar{a}^{q}_{1}+1}\right\} \frac{\sin \gamma}{\gamma} \left(\max\limits_{0 \le i \le k}\max\limits_{1 \le j \le N_i}\{\theta^i_j\} - \theta^q_{N_q} \right) \\
&+ \kappa \min \left\{\frac{1}{\bar{a}^{k+1}_{1}+1}, \frac{1}{\bar{a}^{q}_{1}+1}\right\} \frac{\sin \gamma}{\gamma} \left(\max\limits_{0 \le i \le k}\max\limits_{1 \le j \le N_i}\{\theta^i_j\} - \theta^q_{N_q} \right) + \kappa S_{q-1} D_{q-1}(\theta(t)) \\
&\le D(\Omega) - \kappa \min \left\{\frac{1}{\bar{a}^{k+1}_{1}+1}, \frac{1}{\bar{a}^{q}_{1}+1}\right\} \frac{\sin \gamma}{\gamma} \left(\theta^{k+1}_{N_{k+1}} - \theta^q_1\right) + \kappa(2N+1)D_k(\theta(t)) \\
&\le D(\Omega) - \kappa \frac{1}{\sum_{j=1}^{N-1} (\eta^j A(2N,j)) + 1} \frac{\sin \gamma}{\gamma} Q^{k+1}(t)+ \kappa (2N+1) D_k(\theta(t)), \qquad \mbox{in} \ J_l.
\end{aligned}
\end{equation*}
$\ $

\subsection{Case 4} Consider the case that
\begin{equation*}
\max_{0 \le i \le k+1}\{\bar{\theta}_i\} = \max_{0 \le i \le k}\{\bar{\theta}_i\},\quad \min_{0 \le i \le k+1}\{\underline{\theta}_i\} = \underline{\theta}_{k+1} \quad \mbox{in} \ J_l.
\end{equation*}
\begin{figure}[h]
\centering
\includegraphics[width=0.5\textwidth]{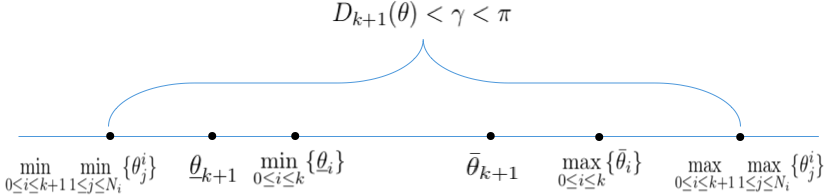}
\caption{The comparison relation in Case 4}
\label{Fig4}
\end{figure}
The comparison relation in this case is showed in Figure \ref{Fig4}.
For this case, the analysis is similar to that in Case 3. Therefore, we omit the details of discussion. \newline

\subsection{Conclusion} Since all analysis above do not depend on interval $J_l$ with $1\le l \le r$, thus we combine all analysis in Case 1, Case 2, Case 3, and Case 4 to derive that 
\begin{equation*}
\dot{Q}^{k+1}(t) \le D(\Omega) - \kappa \frac{1}{\sum_{j=1}^{N-1} (\eta^j A(2N,j)) + 1} \frac{\sin \gamma}{\gamma} Q^{k+1}(t)+ \kappa (2N+1) D_k(\theta(t)), \qquad \mbox{in} \ [0, T^*).
\end{equation*}
\qed

\end{appendix}

\end{document}